\documentclass[11pt,a4paper,reqno]{amsart}
\usepackage{amsmath,amsthm,amsfonts,amssymb,bm,wasysym}
\usepackage{comment}
\usepackage{caption}
\usepackage{subcaption}
\usepackage{fullpage,bbm}
\usepackage{graphicx}
\usepackage{tikz,ifthen}
\usepackage{hyperref}
\usetikzlibrary{calc}
\usetikzlibrary{patterns}
\usetikzlibrary{arrows}
\usetikzlibrary{decorations.pathreplacing}

\def\d{{\rm d}}
\def\E{\mathbb{E}}
\def\es{\emptyset}

\def\mc{\mathcal}
\def\ms{\mathsf}
\def\one{\mathbbmss{1}}
\def\Q{\mathbb{Q}}
\def\P{\mathbb{P}}
\def\R{\mathbb{R}}
\def\Z{\mathbb{Z}}

\newcommand{\SIR}{\mathsf{SINR}}

\def\eq{\begin{equation}}
\def\en{\end{equation}}


%




\def\Q{{\Bbb Q}}

\def\e{{\varepsilon}}

\def\D{\Delta}
\def\a{\alpha}

\def\b{\beta}

\def\e{\varepsilon}

\def\phi{\varphi}

\def\r{\rho}
\def\de{\delta}

\def\s{\sigma}
\def\t{t}

\def\D{\Delta}
\def\L{\Lambda}
\def\G{\Gamma}

\def\P{{\Phi}}

\def\T{\T}

\def\V|{{\Vert}}


\def\d{{\rm d}}
\def\E{\mathbb{E}}
\def\I{{I}}
\def\es{\emptyset}
\def\Ln{\Lambda_n}
\def\one{\mathbbmss{1}}
\def\mc{\mathcal}

\def\ms{\mathsf}
\def\N{\mathbb{N}}
\def\one{\mathbbmss{1}}
\def\P{\mathbb{P}}
\def\R{\mathbb{R}}
\def\Z{\mathbb{Z}}

\theoremstyle{plain}
\newtheorem{theorem}{Theorem}
\newtheorem{proposition}[theorem]{Proposition}
\newtheorem{corollary}[theorem]{Corollary}
\newtheorem{lemma}[theorem]{Lemma}

\theoremstyle{definition}

\theoremstyle{remark}

 \captionsetup[subfigure]{margin=0pt, parskip=0pt, hangindent=0pt, indention=0pt,    labelformat=parens, labelfont=rm}

\begin{document}
\title{Large-deviation principles for connectable receivers in wireless networks} 
\author{Christian Hirsch}
\author{Benedikt Jahnel}
\author{Paul Keeler}
\author{Robert Patterson}
\thanks{Weierstrass Institute Berlin, Mohrenstr. 39, 10117 Berlin, Germany; E-mail: {\tt christian.hirsch@wias-berlin.de}, {\tt benedikt.jahnel@wias-berlin.de}, {\tt paul.keeler@wias-berlin.de}, {\tt robert.patterson@wias-berlin.de}.}

\begin{abstract}
We study large-deviation principles for a model of wireless networks consisting of Poisson point processes of transmitters and receivers, respectively. To each transmitter we associate a family of connectable receivers whose signal-to-interference-and-noise ratio is larger than a certain connectivity threshold. First, we show a large-deviation principle for the empirical measure of connectable receivers associated with transmitters in large boxes.  Second, making use of the observation that the receivers connectable to the origin form a Cox point process, we derive a large-deviation principle for the rescaled process of these receivers as the connection threshold tends to zero. Finally, we show how these results can be used to develop importance-sampling algorithms that substantially reduce the variance for the estimation of probabilities of certain rare events such as users being unable to connect.
\end{abstract}

\keywords{Wireless network; signal-to-interference-and-noise ratio; large-deviation principle; importance sampling} 

\subjclass[2010]{Primary: 60F10; Secondary: 60K35}

\maketitle






\section{Model description and main results}
\label{intrSec}
We consider a stochastic-geometry model for a wireless network consisting of a family of transmitters and a family of receivers. Transmitters and receivers are modeled by independent homogeneous Poisson point processes $X$ and $Y$ in $\R^d$ whose intensities are assumed to be non-zero and finite and will be denoted by $\lambda_{\ms{T}}$ and $\lambda_{\ms{R}}$, respectively. For instance, we may think of transmitters and receivers as users participating in a device-to-device communication where messages need not be routed via a base station. It is believed that this form of communication will be a central concept in next-generation wireless networks~\cite{5GD2D}. 
The most basic requirement in the design of such networks is to guarantee satisfactory quality of service on average. Additionally, it is desirable to control and quantify the probability of low quality of service to occur. This necessitates a more detailed probabilistic analysis and the theory of large deviations provides the appropriate tools.

Let us now describe the communication model. 
In order to determine the connection quality of messages sent out from a transmitter located at $x\in\R^d$ to a receiver located at $y\in\R^d$, the \emph{signal-to-interference-and-noise ratio} ($\SIR$) has been identified to be of fundamental importance~\cite{baccelli2009stochastic1}. More precisely, we assume that signals are transmitted with some positive powers $P_x$ and decay according to the path-loss function $\ell(|x-y|)$, where $|\cdot|$ denotes the Euclidean norm in $\R^d$ and $\ell:[0,\infty)\to[0,\infty)$ is a decreasing function satisfying $\ell(r)\in o(r^{-\alpha})$ for some $\alpha>d$. In particular $\ell$ is bounded and $\int \ell(|x|)\d x<\infty$.
In addition to the deterministic decay over distance, the signal strength is also influenced by random fading effects that are encoded in a positive random variable $F_{x,y}$. Such fading effects can for example come from large obstacles in the environment or multi-path interference due to moving reflectors \cite[Chapter 22]{baccelli2009stochastic2}.

Furthermore, considering a signal sent out from $X_i$, the strength of the \emph{interference} experienced at a location $y\in\R^d$ is assumed to be of the form
$$\I(X_i,y)=\I(X_i,y,X)=w+\sum_{j\ne i}P_{X_j}F_{X_j,y}\ell(|X_j-y|).$$
In words, the interference strength at a given location $y$ consists of a contribution from the thermal noise $w>0$ and the aggregated signal strengths coming from all other transmitters. For notational convenience, we differ from the common convention~\cite{baccelli2009stochastic1} and include the thermal noise $w$ in the interference term.
Hence, the SINR for the transmitter $X_i\in X$ and the possible receiver location $y\in\R^d$ is defined as the ratio of the signal strength by the interference, i.e.,
$$
\SIR(X_i,y)=\SIR(X_i,y,X)=\frac{P_{X_i}F_{X_i,y}\ell(|X_i-y|)}{\I(X_i,y)}.
$$
We assume that a connection can be established between $X_i\in X$ and $Y_j\in Y$ if $\SIR(X_i,Y_j)\ge\t$ for some fixed connectivity threshold $\t$.
The importance of the SINR stems from Shannon's law in information theory, which provides an explicit formula expressing the maximum possible data throughput in terms of $\SIR$, see~\cite[Chapter 16]{baccelli2009stochastic2}.

\input{modPic.tex}

\medskip
In the present paper, we analyze how connectivity properties of the $\SIR$-based network model described above behave in certain asymptotic regimes. First, we associate to each transmitter $X_{i}$ the family of \emph{receivers $Y^{(i)}$ that are connectable to $X_i$}, i.e., 
$$Y^{(i)}=\{Y_j\in Y:\,  \SIR(X_i,Y_j)\ge\t\}.$$ 
An illustration of the transmitters together with their connectable receivers is shown in Figure~\ref{modFig}.
The family $Y^{(i)}$ can be used to express a variety of \emph{frustration events}  for the transmitter $X_i$. For instance $\{Y^{(i)}=\es\}$ describes the frustration event that the transmitter $X_i$ is isolated, in the sense that it fails to communicate with any of the receivers. Similarly, if $B_r(X_i)$ denotes the open Euclidean ball with radius $r$ centered at $X_i$, then $Y^{(i)}\subset B_r(X_i)$ encodes the event that $X_i$ can only communicate with receivers at distance at most $r$. 


\medskip
Before we state our first main result, let us introduce the precise assumptions on the transmission powers and fading variables.
We assume that the transmission powers $\{P_x\}_{x\in\R^d}$ form an iid random field whose existence is guaranteed by Kolmogorov's extension theorem. 
Note that only the subset of powers $\{P_{X_i}\}_{i\ge1}$ is relevant, but it is notationally convenient to work with the random field indexed by the full space $\R^d$. A similar remark holds for the random fading field $\{F_{x,y}\}_{x,y\in\R^d}$. 
It can reproduce two different kinds of fading effects. First, a contribution stemming from a suitable random environment such as slow fading, which is typically spatially correlated. Second, effects such as fast fading, that are idiosyncratic to the pair $(x,y)$ and therefore do not exhibit spatial correlation. To be more precise for the first contribution, we assume $Z$ to be a homogeneous Poisson point process with intensity $\lambda_{\ms{E}}>0$ modeling the random environment. Moreover, we use an iid random field $\{U_{x,y}\}_{x,y\in\R^d}$ consisting of random variables uniformly distributed on $[0,1]$ for the idiosyncratic effects. Then the random fading field can have the following general form $$F_{x,y}=\Phi(y-x,Z-x,U_{x,y})$$ where $\Phi$ is measurable and positive. In particular, the construction is such that the fading field is spatially translation invariant, i.e., $\{F_{x+z,y+z}\}$ is equal in distribution to $\{F_{x,y}\}$ for any $z\in\R^d$.

The dependence of $\Phi$ on its second component should be local in the sense that there exists an increasing function $s_{\ms{env}}:[0,\infty)\to[0,\infty)$ such that $\Phi(z,\varphi,u)=\Phi(z,\varphi\cap B_{s_{\ms{env}}(|z|)}(o),u)$, where $B_{s_{\ms{env}}(|z|)}(o)$ denotes the Euclidean ball of radius $s_{\ms{env}}(|z|)$ centered at the origin.
Moreover, letting $U$ be a single uniformly distributed random variable on $[0,1]$, we assume that there exist $N>0$, $s_{\ms{max}}>s_{\ms{min}}>0$ such that for any $z\in\R^d$ and any locally finite $\varphi\subset\R^d$ the distribution function $q_{z,\varphi}:t\mapsto \P(\Phi(z,\varphi,U)^{-1}\le t)$
\begin{enumerate}
\item is globally Lipschitz
with Lipschitz constant $N$,
\item $q_{x,\varphi}(s)=0$ for $s\le s_{\ms{min}}$ and $q_{x,\varphi}(s)=1$ for $s>s_{\ms{max}}$.
\end{enumerate}
The second condition ensures that the fading variables have support bounded away from zero and infinity. We assume the same for the power variables $P_x$.
Moreover the random objects $X$, $Y$, $Z$, $\{P_x\}$ and $\{U_{x,y}\}$ are independent. 

We provide an example illustrating possible fading fields within the above framework. For instance a Boolean model $\Xi=\bigcup_{Z_i\in Z}B_1(Z_i)$ can be interpreted as randomly distributed obstacles in a city. If the line of sight between transmitter $x$ and receiver $y$ is blocked by some building 
the signal propagation is diminished. That is, $F_{x,y}=\exp\big(-\one_{[x,y]\cap\Xi\neq\emptyset} \big)J^{-1}(U_{x,y})$ where $J$ is a globally Lipschitz distribution function of a random variable which is bounded away from zero and infinity. We note that for modeling urban environments it is important to take into account the effects of correlated fading variable due to fixed obstacles. See also~\cite{bacZhang}.

\medskip
Our first main result will provide a \textit{large-deviation principle} (LDP) for the empirical measure of the family of all connectable receivers $Y^{(i)}-X_i$ such that $X_i$ is contained in the box $\Lambda_n=[-n/2,n/2]$ for large $n$. 
To make this precise, we first note that each $Y^{(i)}$ is a random variable in the measurable space $(\mathbf{N}_{\ms{f}},\mc{N}_{\ms{f}})$. 
Here $\mathbf{N}_{\ms{f}}$ is the family of all finite subsets of $\R^d$ that is endowed with the $\sigma$-algebra $\mc{N}_{\ms{f}}$ generated by maps of the form $\ms{ev}_B:\varphi\mapsto\#(\varphi\cap B)$, for any Borel set $B\subset\R^d$. In fact, $\bf{N}_{\ms{f}}$ is also a Polish space, see~\cite[Section A.2.5]{pp1}.
Now, knowing the distribution of the \textit{empirical measure}
$$L_n=\frac{1}{|\L_n|}\sum_{X_i\in\Lambda_n}\delta_{Y^{(i)}-X_i}$$
we can answer questions such as:
\begin{itemize}
\item What is the probability that, when spatially averaged, a certain proportion of transmitters in $\L_n$ are isolated?
\item What is the probability that, when spatially averaged, a certain proportion of transmitters in $\L_n$ have $l$ receiver in an $r$ proximity?
\end{itemize}
 Apart from these examples, $L_n$ can be used to describe more general events like an average number of connectable receivers per transmitter, i.e., $|\L_n|^{-1}\sum_{X_i\in\Lambda_n}\#Y^{(i)}$.

The empirical measure $L_n$ is a random variable with values in the measurable space \linebreak $(\mc{M}_{\ms{f}}(\mathbf{N}_{\ms{f}}),\mc{B}^{\ms{cy}}(\mc{M}_{\ms{f}}))$. Here, $\mc{M}_{\ms{f}}(\mathbf{N}_{\ms{f}})$ denotes the family of all finite measures on $\mathbf{N}_{\ms{f}}$ and $\mc{B}^{\ms{cy}}(\mc{M}_{\ms{f}})$ is the $\sigma$-algebra generated by the evaluation maps $\mu\mapsto \mu(B)$, where $B$ is any bounded Borel set of $\mathbf{N}_{\ms{f}}$. Since our first main result provides a level-2 LDP, the $\tau$-topology on $\mc{M}_{\ms{f}}$ will play an important role. This topology is generated by the maps $\mu\mapsto \mu(B)$ where $B$ is any bounded Borel set of $\mathbf{N}_{\ms{f}}$. We refer the reader to~\cite[Section 6.2]{dz98} for a detailed discussion of this topological space.


%

\medskip
The LDP allows us to quantify the decay of probability for events away from their ergodic limit on an exponential scale. 
The exponential rate of decay to zero is proportional to the volume and the proportionality factor is called the rate function. 
%
%
In order to identify the LDP rate function, we first recall the notion of \emph{specific entropy of point marked random  fields}. We follow the presentation in~\cite{georgii2} and refer the reader also to~\cite[Chapter 15]{georgiiBook} for further details. 
Let $E$ be a Polish space and write $\mc{E}$ for the corresponding Borel $\sigma$-algebra.
Furthermore, let $\mathbf{N}_E$ denote the family of all configurations $\varphi\subset\R^d\times E$ whose projection to $\R^d$ is injective and with image forming a locally finite set. The space $\mathbf{N}_{E}$ is endowed with the smallest $\sigma$-algebra for which all evaluation maps $\varphi\mapsto\#(\varphi\cap(B\times F))$ are measurable for any Borel sets $B$, $F$ of $\R^d$ and $E$, respectively. Any probability measure on $(\mathbf{N}_E,\mc{N}_E)$ is called $E$-marked point random field.
Let $n\ge1$ and $P$ be an $E$-marked point random field whose realizations are contained in $\Ln$ with probability $1$. Moreover, let $Q$ be another $E$-marked point random field that is absolutely continuous with respect to $P$, where $f$ denotes the respective density. Then, the \emph{specific entropy} $H(Q|P)$ of $Q$ with respect to $P$ is defined as
$$H(Q|P):=P(f\log f),$$
where $P(f\log f)$ denotes the expectation of $f\log f$ with respect to $P$.
This definition is extended to random point fields $Q$ that are not absolutely continuous with respect to $P$ by putting $H(Q|P)=\infty$. Finally, if $P$, $Q$ are any $E$-marked point random fields we introduce the notation
$$h(Q|P):=\sup_{n\ge1}\frac{1}{|\L_n|}H\big(Q_{\Ln}|P_{\Ln}\big),$$
where $P_{\Ln}$, $Q_{\Ln}$ denote the projection of $P$, $Q$ to $\Ln$.

In the following, we write $\mc{P}_\theta$ for the family of all stationary $E$-marked point random fields of finite intensity. Here, a stationary $E$-marked point random field is a probability measure on $\mathbf{N}_E$ that is invariant with respect to shifts on $\R^d$. The intensity of $Q$ is defined as 
$$\int_{\mathbf{N}_E}\#\{(x_i,e_i)\in\varphi:\,x_i\in[0,1]^d\}Q(\d\varphi).$$
We also need the notion of the {Palm version} of a stationary point random field as defined for example in~\cite{mecPalm}. The (unnormalized) \emph{Palm mark measure} $Q^o$ associated with $Q\in\mc{P}_\theta$ is given by
$$Q^o(F)=\int_{\mathbf{N}_E}\#\{(x_i,e_i)\in\varphi:\,(x_i,e_i)\in[0,1]^d\times F\}Q(\d\varphi),\qquad F\in\mc{E}.$$
In other words, after normalization, $Q^o$ describes the distribution of the marks of $Q$.

The concept of random marked point random fields is very flexible so that the probability space associated with $X$, $Y$, $Z$, $\{P_x\}$ and $\{U_{x,y}\}$ can be encoded in this framework, see Section~\ref{indEmpSec} for details.

\medskip
Let us state the first main result of this paper, an LDP for the empirical measure of connectable receivers associated with transmitters in a large box. Starting from a stationary point random field $\Q$ of transmitters, receivers and environment, we define $\Q^*$ as the Palm mark measure of the stationary $\mathbf{N}_{\mathsf{f}}$-marked point random field defined by $\{(X_i,Y^{(i)}-X_i)\}_{i\ge1}$.
\begin{theorem}
\label{ldpThm}
The random measures $\{L_n\}_{n\ge1}$ satisfy an LDP in the $\tau$-topology with rate $|\Lambda_n|$ and good rate function 
$$\mc{I}(Q)=\inf_{\substack{\Q\in\mc{P}_\theta\\  \Q^*=Q}} h(\Q|\P).$$
That is for all $A\in \mc{B}^{\ms{cy}}(\mc{M}_{\ms{f}})$
$$\limsup_{n\to\infty}\frac{1}{|\Lambda_n|}\log\P(L_n\in A)\le-\inf_{Q\in \bar A}\mc{I}(Q)$$
and
$$\liminf_{n\to\infty}\frac{1}{|\Lambda_n|}\log\P(L_n\in A)\ge-\inf_{Q\in A^o}\mc{I}(Q)$$
where $\bar A$ denotes the closure and $A^o$ the interior of $A$ respectively. 
Moreover, the function $\mc{I}$ is lower semi-continuous and has compact level sets.
\end{theorem}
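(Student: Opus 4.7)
The plan is to reduce the claim to a level-3 large deviation principle for the empirical stationary field of a suitably enriched marked Poisson process, and then to transfer the LDP down to $L_n$ via the contraction principle.

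\emph{Step 1 (encoding and level-3 LDP).} I would first bundle the randomness of $X$, $Y$, $Z$, $\{P_x\}$ and $\{U_{x,y}\}$ into a single stationary marked Poisson point random field $\P^\dagger$ on $\R^d\times E$ for a suitable Polish mark space $E$, where the mark at each point records its type (transmitter, receiver, environment), the associated transmission power (if applicable), and the auxiliary data needed to determine $F_{X_i,y}$. For this enriched process one applies Georgii's LDP~\cite{georgii2} to the empirical stationary field $R_n=|\Ln|^{-1}\int_{\Ln}\delta_{\theta_x\omega}\,{\rm d}x$; this yields an LDP at speed $|\Ln|$ with good rate function $\Q\mapsto h(\Q\,|\,\P^\dagger)$ on $\mc{P}_\theta$ in the topology of local convergence.

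\emph{Step 2 (contraction).} Define the projection $\Psi\colon\mc{P}_\theta\to\mc{M}_{\ms{f}}(\mathbf{N}_{\ms{f}})$ by $\Psi(\Q)=\Q^*$, the Palm mark measure of the secondary marked random field $\{(X_i,Y^{(i)}-X_i)\}_{i\ge1}$ associated with $\Q$. Up to negligible boundary effects, $L_n=\Psi(R_n)$, so an application of the contraction principle would yield the LDP with rate function $\mc{I}(Q)=\inf\{h(\Q\,|\,\P^\dagger):\Psi(\Q)=Q\}$, which matches the claim.

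\emph{Step 3 (the main obstacle: continuity of $\Psi$).} The map $\Psi$ is not continuous in the topology of Step~1, because $Y^{(i)}-X_i$ depends on the full configuration through the interference $\I(X_i,y)$. I would handle this via truncation: let $\Psi_K$ compute the interference at each $X_i$ only from transmitters within distance $K$, with the environment mark correspondingly restricted. Each $\Psi_K$ is a bounded local functional, hence $\tau$-continuous, so the contraction principle produces an LDP for $\Psi_K(R_n)$ at fixed $K$. The passage $K\to\infty$ is carried out by the exponentially good approximation lemma~\cite{dz98}, using (i) $\int\ell(|x|)\,{\rm d}x<\infty$ to control the tail interference, (ii) the Lipschitz bound on $q_{z,\varphi}$ to bound the probability that the SINR falls within an $\varepsilon$-neighbourhood of the threshold $\t$ (where the indicator $\one_{\{\SIR\ge\t\}}$ is sensitive to perturbations), and (iii) the support bounds on powers and fading away from $0$ and $\infty$ to rule out large, rare contributions.

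\emph{Step 4 (wrap-up).} Goodness and lower semicontinuity of $\mc{I}$ descend from the corresponding properties of $h(\cdot\,|\,\P^\dagger)$ combined with the lower semicontinuity of $\Psi$. Exponential tightness in the $\tau$-topology reduces to tightness in the total mass $\Psi(\Q)(\mathbf{N}_{\ms{f}})=\lambda_{\ms{T}}$, which is deterministic, and the upgrade from local convergence to the $\tau$-topology follows from the projective-limit machinery of~\cite[Sec.~6.2]{dz98}. The main hurdle is Step~3: producing quantitative control on the SINR-mechanism strong enough to discard distant-interference contributions on an exponential scale, for which the Lipschitz and compact-support hypotheses on $q_{z,\varphi}$ are tailor-made.
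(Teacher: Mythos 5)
Your overall architecture — encode everything into one stationary marked Poisson field, invoke Georgii's level-3 LDP, contract through the Palm-mark map, and pass to the limit after a finite-range truncation of the interference — is the same as the paper's. There are, however, two genuine gaps in Step~3/Step~4, one of them substantive.

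First, and most importantly, invoking the ``exponentially good approximation lemma'' is not by itself sufficient to conclude: that lemma (e.g.\ \cite[Theorem~4.2.16]{dz98}) produces an LDP with rate function given by a double limit $\sup_{\delta>0}\liminf_{K\to\infty}\inf_{d(Q',Q)<\delta}\mc{I}_K(Q')$, and one must still identify this with the claimed $\mc{I}(Q)=\inf_{\Q^*=Q}h(\Q|\P)$. Your items (i)--(iii) control the size of the truncation error — that gives exponential goodness of the approximations — but say nothing about why the truncated rate functions $\mc{I}_K$ converge to $\mc{I}$ in the required sense. The paper resolves this by invoking a stronger transfer result (\cite[Theorem~1.13]{eiSchm}), whose hypothesis is the \emph{uniform} convergence $\sup_{h(\Q|\P)\le K}\d_{\ms{TV}}(\Q^*,\Q^{*,b})\to0$ on entropy sublevel sets; and verifying this uniformity is itself nontrivial — the paper establishes it through an auxiliary LDP (Lemma~\ref{ldpBProp2}) for the fraction of transmitters whose connectable-receiver set changes between truncation radii $b$ and $b'$, combined with the truncation estimate. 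Without some argument of this kind (or an alternative identification of the limit rate function), your Step~3 stops short of the theorem.

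Second, two smaller points. Your exponential-tightness argument in Step~4 is wrong as stated: the total mass $L_n(\mathbf{N}_{\ms{f}})=|\L_n|^{-1}\#(X\cap\L_n)$ is a scaled Poisson variable, not the deterministic constant $\lambda_{\ms{T}}$, so tightness does not follow ``for free.'' (The paper never needs exponential tightness directly, precisely because the eiSchm machinery sidesteps it.) Also, to apply Georgii's LDP for the stationary empirical field as stated, one should work with the periodized configurations on $\L_n$; the paper inserts a periodization step (Lemma~\ref{PeriodError}) to show that this modification is exponentially negligible. This is not conceptually deep but is needed to make the contraction step literal.
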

To prove Theorem~\ref{ldpThm}, we make use of the level-3 LDPs established in~\cite{georgii2} (see also~\cite{georgii1,georgii3} for related results). However, the long-range dependencies induced by the interferences prevent us from applying the contraction principle directly. Similarly to~\cite{adCoKo}, we first have to perform a truncation step and consider an approximate model with finite-range dependencies. In order to deduce Theorem~\ref{ldpThm} from the level-3 LDP in the truncated scenario, we show that by a suitable choice of the truncation range, the truncation error becomes arbitrarily small.

\medskip
In our second main result, we investigate how the connectable receivers associated with a typical transmitter located at the origin behave as the connection threshold $\t$ tends to zero.
 Since this scenario turns out to be more complicated than the one considered in Theorem~\ref{ldpThm}, we impose stronger additional assumptions. To be more precise, we assume that $\ell(r)=\min\{1,r^{-\alpha}\}$ for some $\alpha>d$, the transmission power at the origin is fixed (say equal to $1$) and that there is no random environment $Z$. That is, $\{F_{x,y}\}_{x,y\in\R^d}$ are iid and we put $q(a)=\P(F_{x,y}^{-1}\le a)$. 
%
%
Moreover, we assume that there exist $N>0$ and $s_{\ms{min}}>0$ such that
\begin{enumerate}
\item $q_{}$ is globally Lipschitz and globally Lipschitz in its first derivative, both with Lipschitz constant $N$,
\item $q_{}(s)=0$ for $s\le s_{\ms{min}}$ and $q_{}(s)>0$ for $s>s_{\ms{min}}$.
\end{enumerate}
To begin with, we provide some important preliminary observations: First, we note that the receivers connectable to the origin, namely 
$$Y^{\t}=\{Y_j\in Y:\,\SIR(o,Y_j,X\cup\{o\})\ge\t\},$$ 
form a Cox point process with random intensity measure $M_\t$ given by 
$$M_\t(B)=\lambda_{\ms{R}}\int_B\Gamma(\t^{-1}\ell(|y|),y)\d y,$$
where 
\begin{align}\label{Gamma}
\Gamma(a,y)=\E(q(a\I(y)^{-1})|X)\qquad \text{for }a\ge0 \text{ and }y\in\R^d.
\end{align}
In other words, $\G$ is an expectation with respect to the fading field in the interference. More precisely, it is the conditional expectation on the transmitter process  $X=\{(X_i,P_i)\}$ carrying also the transmission powers as marks.
For instance, this observation implies that the probability for the origin to be isolated is given by $p_\t=\E\exp(-M_\t(\R^d))$ and tends to zero as 
 $\t$ tends to zero.
The representation of the isolation probability provides a strong hint that the Varadhan-Laplace technique from the theory of large deviations (see e.g.~\cite{georgii2}) could be a useful tool in the analysis of the asymptotic behavior of $p_\t$ as $\t$ tends to zero. In particular, $p_\t$ should decay exponentially as $\t$ tends to zero. The exact form of this decay is presented in Corollary~\ref{lowTauProbCor}. In Theorem~\ref{lowTauProbThm}, we give a more general result describing the exponential decay of unlikely numbers of connectable receivers in space. 
Throughout the entire manuscript, $\beta=1/\alpha$ denotes the inverse of the path-loss exponent. 
Furthermore, we put $\Lambda'_\t=\Lambda_{2(ws_{\ms{min}}\t)^{-\beta}}$, so that $q(t^{-1}\SIR(o,y))=0$ if $y\not\in\Lambda_\t'$.  In the following, we write $\ms{Pois}_s$ for the stationary point random field induced by a homogeneous Poisson point process with intensity $s\ge0$.

\begin{theorem}
\label{lowTauProbThm}
The random measures $\big\{|\Lambda_\t'|^{-1}Y^{\t}(t^{-\beta}\cdot)\}_{\t<1}$ satisfy an LDP in the weak topology with rate $|\L_\t'|$ and good rate function given by 
\begin{align*}
\mc{I}(\phi)=\begin{cases}
  \int_{\L_1'}\mc{I}_y(\dot\phi(y))\d y  & \text{if }\d\phi/\d x=\dot\phi\text{ exists,}\\
  \infty & \text{otherwise,}
\end{cases}
\end{align*}
where
\begin{align}\label{SingleSiteRateFunction}
\mc{I}_y(s)=\inf_{\Q\in\mc{P}_\theta}\big(h(\Q|\P)+h(\ms{Pois}_s|\ms{Pois}_{\hspace{0.05cm}\lambda_{\ms{R}}\Q(\Gamma(|y|^{-\alpha},o))})\big),
\end{align}
and $\Q(\Gamma(|y|^{-\alpha},o))$ denotes the expectation of $\Gamma(|y|^{-\alpha},o)$ for  a stationary marked point process $X=\{(X_i,P_i)\}$ that is distributed according to $\Q$.
\end{theorem}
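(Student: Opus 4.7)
The plan is to exploit the Cox structure $Y^\t\mid X\sim\ms{Pois}(M_\t)$ and combine two LDPs via a contraction argument: a quenched Poisson LDP for $Y^\t$ given $X$, and a stationary Sanov-type LDP of Georgii for the transmitter field $X$, along the same lines used to establish Theorem~\ref{ldpThm}. After the spatial rescaling $y\mapsto\t^\beta y$ the physical domain $\Lambda_\t'$ becomes the fixed domain $\Lambda_1'$. Using the specific form $\ell(r)=\min\{1,r^{-\alpha}\}$, for rescaled $y\in\Lambda_1'$ with $\t^{-\beta}|y|\ge 1$ one has $\t^{-1}\ell(\t^{-\beta}|y|)=|y|^{-\alpha}$, so the conditional intensity of the rescaled Cox process at $y$ takes the clean form proportional to $\lambda_{\ms R}\,\Gamma(|y|^{-\alpha},\t^{-\beta}y)$, with Jacobian prefactor $\t^{-\beta d}$ matching the LDP rate $|\Lambda_\t'|$ up to a constant. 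The singular region where $\t^{-\beta}|y|<1$ has vanishing rescaled volume and contributes negligibly after a separate approximation.

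I would then discretise $\Lambda_1'$ into cells $B_k^{(\eta)}$ of diameter $\eta$ with centres $y_k$. On each cell the conditional intensity is approximately constant, equal to the random value $\gamma_k=\lambda_{\ms R}\,\Gamma(|y_k|^{-\alpha},\t^{-\beta}y_k)$. The crucial geometric observation is that the physical centres $\t^{-\beta}y_k$ sit at mutual distance $\sim\eta\,\t^{-\beta}\to\infty$ as $\t\to 0$, so after truncating contributions to $\I(\t^{-\beta}y_k)$ from transmitters beyond a fixed radius $R$ — an approximation controlled by the integrability $\int\ell(|x|)\,\d x<\infty$ together with the global Lipschitz bound on $q$ — the intensities $\gamma_k$ become asymptotically independent across cells, each a functional of $X$ in a ball of radius $R$ around its own physical centre.

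On each individual cell I would combine two ingredients. First, the stationary Sanov LDP of~\cite{georgii2}, applied to the empirical marked point measure of $X$ on a physical box of volume $\eta^d\t^{-\beta d}$, gives rate $\eta^d\t^{-\beta d}$ and rate function $h(\Q\mid\P)$; under this LDP the truncated version of $\gamma_k$ behaves like $\lambda_{\ms R}\,\Q(\Gamma(|y_k|^{-\alpha},o))$ by stationarity of $X$ and a local ergodic theorem. Second, conditional on this effective intensity, the normalised receiver count in the cell obeys the standard Poisson LDP with rate function $h(\ms{Pois}_s\mid\ms{Pois}_{\lambda_{\ms R}\Q(\Gamma(|y_k|^{-\alpha},o))})$. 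Applying the contraction principle to this two-step construction and minimising over the latent $\Q$ yields exactly $\mc{I}_{y_k}(\dot\phi(y_k))$ from~\eqref{SingleSiteRateFunction} on each cell. Summing over cells thanks to the decoupling, and passing to $\eta\to 0$, produces the integral formula for $\mc{I}(\phi)$; non-absolutely-continuous $\phi$ are ruled out in the standard way by approximating them with increasingly concentrated densities and observing that the Poisson part of the cost blows up.

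The \emph{principal obstacle}, in my view, is the uniform control of the interference truncation: because $\ell$ has infinite support, $\gamma_k$ genuinely depends on all of $X$, and obtaining exponentially tight control on the truncation error — uniform in $k$ and compatible with sending $\eta\to 0$ after $\t\to 0$ — requires carefully combining the Lipschitz regularity of $q$ in its first argument with sharp tail estimates on the aggregate interference contributed from outside a ball of radius $R$. This is essentially the same obstacle met (and solved) in the truncation step of Theorem~\ref{ldpThm}, but here one additionally needs the truncated functional to pass through the local ergodic theorem cleanly as the cell size $\eta$ shrinks.
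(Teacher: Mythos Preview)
Your proposal captures the essential ingredients --- truncation of the interference, decoupling through independence of disjoint physical boxes, a per-cell combination of Georgii's level-3 LDP with the conditional Poisson LDP for receiver counts, and a mesh-refinement limit --- and these are indeed the same building blocks the paper uses. The overall strategy is sound.

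The paper, however, organises these pieces rather differently. Rather than working directly with the rescaled measure $|\Lambda_\t'|^{-1}Y^\t(\t^{-\beta}\cdot)$ and discretising only in space, the paper introduces an \emph{auxiliary two-parameter field} $Y^{*,\t}(x,s)$ on $\Lambda_1'\times[0,\infty)$, where the extra coordinate $s$ tracks the signal-strength threshold (so that $Y^\t$ is recovered by evaluating on the set $\{(y,s):s\le|y|^{-\alpha}\}$). It then proves an LDP for $Y^{*,\t}$ in the topology of pointwise convergence via Dawson--G\"artner over \emph{both} spatial and $s$-partitions, identifies the rate function by a Mogulskii-type argument, and finally contracts back to $Y^\t$. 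What this buys is a clean rate-function identification: the finite-dimensional rate functions over product partitions $\Xi^d\times S$ have an explicit additive structure, and the passage from $\sup_{\Xi,S}$ to the integral formula follows the classical Mogulskii template. In your direct route the spatial threshold $|y|^{-\alpha}$ varies across each cell, which makes the per-cell rate function and the $\eta\to0$ identification somewhat more awkward to handle (and is presumably why the paper lifts to the product domain first).

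Two technical points you gloss over that the paper handles explicitly: first, the mechanism by which $\gamma_k$ becomes a functional of the stationary empirical field is not a ``local ergodic theorem'' per se but a \emph{periodization} of $X$ in each physical box, after which the cell-averaged intensity is literally $R_{n,X}(\Gamma^b(s,o))$ and \cite[Theorem~3.1]{georgii2} applies directly; the paper shows periodization is exponentially equivalent. Second, the two-step contraction you describe (LDP for $X$, then conditional Poisson LDP) needs the joint upper-/lower-semicontinuity of $(\Q,a)\mapsto h(\Q|\P)+h(a\mid\mu_\Q^b)$ to push through Varadhan-type bounds; the paper isolates this as a separate lemma. Your identification of the principal obstacle (uniform truncation control) is accurate, and the paper resolves it exactly as you anticipate, by the same Lipschitz-plus-tail argument used for Theorem~\ref{ldpThm}.
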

Note that in contrast to Theorem~\ref{ldpThm} the probability measures $\Q\in\mathcal{P}_\theta$ in \eqref{SingleSiteRateFunction} are distributions only of the transmitters $X$ and their transmission powers $P$. Setting $\phi\equiv 0$ gives the decay of isolation probability, this is the content of the following corollary.

\begin{corollary}\label{lowTauProbCor}
\begin{align*}
\lim_{\t\to0}|\Lambda_\t'|^{-1}\log p_\t&=\lim_{\t\to0}|\Lambda_\t'|^{-1}\log\E\exp\Big(-\lambda_{\ms{R}}\int_{\R^d} \Gamma(t^{-1}\ell(|y|),y)\d y\Big)\\
&=-\int_{\L_1'}\inf_{\Q\in\mc{P}_\theta}\big(h(\Q|\P)+\lambda_{\ms{R}} \Q(\Gamma(|y|^{-\alpha},o))\big)\d y.
\end{align*}
\end{corollary}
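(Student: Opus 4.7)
The plan is to derive this from Theorem~\ref{lowTauProbThm} in two separate steps, handling the upper and lower bounds differently because the event $\{Y^\t=\es\}$ corresponds to the singleton $\{\mu_\t=0\}$ with $\mu_\t=|\L'_\t|^{-1}Y^\t(\t^{-\beta}\cdot)$, and this singleton is closed but has empty interior in the weak topology.

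For the upper bound, the zero measure is a closed point in the weak topology, so the upper half of Theorem~\ref{lowTauProbThm} immediately yields
\[\limsup_{\t\to0}|\L'_\t|^{-1}\log p_\t\le-\mc{I}(0)=-\int_{\L'_1}\mc{I}_y(0)\,dy.\]
A short computation using the identity $h(\ms{Pois}_0|\ms{Pois}_r)=r$ (the specific relative entropy of the empty Poisson process with respect to $\ms{Pois}_r$) reduces $\mc{I}_y(0)$ to $\inf_\Q(h(\Q|\P)+\lambda_{\ms R}\Q(\Gamma(|y|^{-\alpha},o)))$, which is the right-hand side of the corollary.

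For the lower bound, the LDP lower bound of Theorem~\ref{lowTauProbThm} applied to open neighborhoods of $0$ is not enough, since the event that $Y^\t$ has few rather than zero points is exponentially much more likely than $\{Y^\t=\es\}$. I would instead build the lower bound directly via a Cramér-type exponential tilt using the Cox representation $p_\t=\E\exp(-M_\t(\R^d))$. For a measurable, almost-optimal selection $y\mapsto\Q^*_y\in\mc{P}_\theta$, I would partition $\L'_\t$ into mesoscopic boxes and, exploiting the asymptotic independence of disjoint blocks of the Poisson transmitter field together with the Sanov-type LDP underlying Theorem~\ref{lowTauProbThm}, construct an event on which the transmitter configuration in each mesoscopic box centered at $\t^{-\beta}y$ resembles a typical sample of $\Q^*_y$. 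This event has probability at least $\exp(-|\L'_\t|\int_{\L'_1}h(\Q^*_y|\P)\,dy-o(|\L'_\t|))$, and on it $M_\t(\R^d)\approx|\L'_\t|\int_{\L'_1}\lambda_{\ms R}\Q^*_y(\Gamma(|y|^{-\alpha},o))\,dy$ by a law-of-large-numbers argument. This gives $p_\t\ge\exp(-|\L'_\t|\int_{\L'_1}[h(\Q^*_y|\P)+\lambda_{\ms R}\Q^*_y(\Gamma)]\,dy-o(|\L'_\t|))$, and pointwise optimization of $\Q^*_y$ in $y$ yields the matching lower bound.

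The main obstacle is the multiscale construction of the tilt: one must provide a measurable-in-$y$ almost-optimal $\Q^*_y$ and establish a quantitative local-approximation statement saying that conditioning the Poisson transmitter field inside each mesoscopic box to produce an empirical field close to $\Q^*_y$ costs only $\exp(-|\text{box}|\cdot h(\Q^*_y|\P))$ up to subexponential corrections. The finite-range truncation of the interference from the proof of Theorem~\ref{ldpThm} is what decouples distant mesoscopic blocks, and continuity of $\Q\mapsto\lambda_{\ms R}\Q(\Gamma(|y|^{-\alpha},o))$ transfers the transmitter-level approximation into control on $M_\t(\R^d)$. A cleaner alternative would be to first extract from the proof of Theorem~\ref{lowTauProbThm} an auxiliary LDP for the rescaled quenched intensity $\nu_\t=|\L'_\t|^{-1}M_\t(\t^{-\beta}\cdot)$ with rate function $\int_{\L'_1}\inf_{\Q:\lambda_{\ms R}\Q(\Gamma)=r}h(\Q|\P)\,dy$, and then apply Varadhan's integral lemma to the bounded upper-semicontinuous functional $\phi\mapsto-\phi(\L'_1)$ to obtain both bounds at once.
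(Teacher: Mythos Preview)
Your upper bound is exactly what the paper does: apply the LDP upper bound of Theorem~\ref{lowTauProbThm} to the closed singleton $\{0\}$ and simplify $\mc{I}_y(0)$ via $h(\ms{Pois}_0|\ms{Pois}_r)=r$.

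For the lower bound the paper follows essentially your ``cleaner alternative'', implemented through the discretization you describe in your first route. Concretely, the paper starts from the Cox representation $p_\t=\E\exp(-\lambda_{\ms R}\int_{\Lambda'_\t}\Gamma(\t^{-1}\ell(|y|),y)\,dy)$, subdivides $\Lambda'_\t$ into $2^{dn}$ congruent sub-cubes, replaces $\Gamma$ by its value at the nearest-to-origin corner $y_i$ of each sub-cube (a one-sided bound by monotonicity), truncates to $\Gamma^b$, and periodizes the transmitter configuration in each sub-cube. Periodization yields exact independence across sub-cubes, so the expectation factorizes and the Varadhan--Laplace lower bound from~\cite[Theorem 3.1]{georgii2} is applied separately in each sub-cube, giving $-\inf_\Q(h(\Q|\P)+\lambda_{\ms R}\Q(\Gamma^b(|y_i|^{-\alpha},o)))$ per sub-cube. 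One then removes the truncation ($b\to\infty$) and the discretization ($n\to\infty$) using the Lipschitz bound on $q$.

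The main difference from your sketch is that the paper never needs a measurable selection $y\mapsto\Q^*_y$ or an explicit tilting event: discretizing first reduces the problem to finitely many reference points $y_i$, and at each $y_i$ the infimum over $\Q$ emerges automatically from the Varadhan bound rather than from choosing an almost-optimizer by hand. Your first route would also work but is heavier; your second route is precisely the structure of the paper's argument, with the periodization step supplying the independence needed to factor the Laplace functional.
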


Large-deviation principles in SIR-based networks have already been considered in~\cite{ldpInt,ldpGinibre}. However, the question treated in Theorem~\ref{lowTauProbThm} is in a certain sense dual to the ones discussed in~\cite{ldpInt,ldpGinibre}. In those papers a large-deviation principle was derived for the interference at the origin caused by the signals from other users. We investigate a scenario where the origin sends out a signal and we are interested in the interference at the location of the other users. 

The idea for the proof of Theorem~\ref{lowTauProbThm} is to introduce a stationary point process that carries more information than $Y^{\t}$. For this point process, we first establish a level-1 LDP based on the results of~\cite{georgii2}, and then deduce a path-space LDP using the Dawson-G\"artner technique. The proof is concluded by an application of the contraction principle.

Corollary~\ref{lowTauProbCor} shows that $p_\t$ decays exponentially in $\t^{-\frac{d}{\alpha}}$ and provides a variational characterization of the rate function. However, for the purpose of estimating the actual value of $p_\t$, our asymptotic result has two drawbacks. First, in Corollary~\ref{lowTauProbCor}, we do not make any claims as regards to how small $\t$ should be for the asymptotic to be an acceptable approximation. It is not at all clear from the variational formula how to compute (or even approximate) the asymptotic rate function. Nevertheless, when estimating the isolation probability $p_\t$ via Monte Carlo simulations, our large-deviation result can be used to devise an importance-sampling scheme that substantially reduces the estimation variance. In the field of stochastic processes, large-deviation techniques have emerged as a powerful tool to find suitable importance-sampling densities~\cite[Chapter 6.6]{glynn}, but so far have not found widespread use for spatial rare-event problems. 

As a notable exception, we mention~\cite{isSir}, which deals with rare events arising from large values of the interference measured at the origin. In that paper, it is shown that the asymptotically efficient importance-sampling density is given by a certain inhomogeneous Poisson point process. In our setting, the variational characterization in Theorem~\ref{lowTauProbThm} suggests that the asymptotically optimal density is not given by a Poisson point process, but by a collection of location-dependent Gibbs processes. Still, in a first step, we provide simulation results illustrating that using an isotropic Poisson point process already leads to substantial variance reduction. 
Let us also note that importance sampling for Gibbs processes on the lattice has been studied in~\cite{baldiGibbs}. 

The present paper is organized as follows. In Sections~\ref{ldpThmSec} and~\ref{sec3}, we provide the proofs for Theorems~\ref{ldpThm} and~\ref{lowTauProbThm}, respectively. Section~\ref{sec3} also contains the proof of Corollary~\ref{lowTauProbCor}. Finally, in Section~\ref{impSampSec} we describe two importance-sampling schemes and provide some simulation results.

%

\section{Proof of Theorem~\ref{ldpThm}}
\label{ldpThmSec}
As mentioned in Section~\ref{intrSec}, in order to prove Theorem~\ref{ldpThm}, we use the classical level-3 large-deviation result for Poisson point processes~\cite[Theorem 3.1]{georgii2}. However, the interferences induce long-range interactions that are not immediately compatible with the topology $\tau_{\mc{L}}$ of local convergence that is used in~\cite{georgii2}. To resolve this issue, we will proceed similarly to~\cite{adCoKo} and show that a suitable truncation of the path-loss functions appearing in the interference expression induces only a negligible error, see Section~\ref{truncPSec1}. After this truncation, we show in Section~\ref{indEmpSec} how the LDP for the stationary empirical field~\cite[Theorem  3.1]{georgii2} can be used to prove Theorem~\ref{ldpThm}.




\subsection{Truncation of the path-loss function}
\label{truncPSec1}
 First, we show that only an asymptotically negligible error occurs when disregarding transmitters close to the boundary of $\Lambda_n$. This is a well-known consequence of the Poisson concentration property~\cite[Chapter 2.2]{lugosi}, but for the convenience of the reader, we provide a detailed proof. 

\begin{lemma}
\label{erosionLem}
Let $b,\varepsilon>0$ be arbitrary. Then,
$$\lim_{n\to\infty}\frac{1}{|\Lambda_n|}\log\P(X(\Lambda_n\setminus \Lambda_{n-b})\ge\varepsilon|\Lambda_n|)=-\infty.$$
\end{lemma}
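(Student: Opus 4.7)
The plan is to bound the probability using a standard Chernoff estimate for the Poisson distribution and then exploit the fact that the boundary layer $\Lambda_n\setminus\Lambda_{n-b}$ has volume only of order $n^{d-1}$, whereas the event we are controlling requires linearly many (in $|\Lambda_n|=n^d$) points to fall in this layer.

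More concretely, since $X$ is a homogeneous Poisson point process with intensity $\lambda_{\ms{T}}$, the random variable $N_n:=X(\Lambda_n\setminus\Lambda_{n-b})$ is Poisson with mean
$$\mu_n=\lambda_{\ms{T}}\bigl(n^d-(n-b)^d\bigr)\le \lambda_{\ms{T}}\,d\,b\,n^{d-1}$$
for all $n\ge b$, so in particular $\mu_n/|\Lambda_n|\to 0$. The standard Chernoff bound for a Poisson variable with mean $\mu$ gives, for any $k>\mu$,
$$\P(N\ge k)\le \exp\bigl(-k\log(k/\mu)+k-\mu\bigr).$$
Applying this with $k=\varepsilon|\Lambda_n|$ and $\mu=\mu_n$ (valid for all sufficiently large $n$, since $\mu_n=o(|\Lambda_n|)$), I obtain
$$\frac{1}{|\Lambda_n|}\log\P(N_n\ge \varepsilon|\Lambda_n|)\le -\varepsilon\log\!\Bigl(\frac{\varepsilon|\Lambda_n|}{\mu_n}\Bigr)+\varepsilon-\frac{\mu_n}{|\Lambda_n|}.$$

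The last two terms are bounded (in fact, the third tends to $0$), while $\varepsilon|\Lambda_n|/\mu_n\ge \varepsilon n/(\lambda_{\ms{T}}db)\to\infty$, so the leading term tends to $-\infty$. This yields the claim. Since the argument is a direct application of a well-known Poisson concentration inequality, there is no genuine obstacle; the only point to be a bit careful about is the separation of scales between the surface-order mean $\mu_n=O(n^{d-1})$ and the volume-order threshold $\varepsilon|\Lambda_n|=\varepsilon n^d$, which is what forces the Chernoff bound into its super-exponential regime.
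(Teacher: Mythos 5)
Your proof is correct and uses essentially the same approach as the paper: both apply the Poisson Chernoff bound $\P(N\ge k)\le(\mu/k)^k e^{k-\mu}$ with $k=\varepsilon|\Lambda_n|$ and then observe that the surface-order mean $\mu_n=O(n^{d-1})$ forces the leading term $-\varepsilon\log(\varepsilon|\Lambda_n|/\mu_n)$ to $-\infty$. The only cosmetic difference is that the paper writes the mean as $\delta n^d$ with $\delta=\lambda_{\ms{T}}(1-(1-b/n)^d)\to0$ rather than bounding it by $\lambda_{\ms{T}}dbn^{d-1}$, but the underlying estimate is identical.
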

\begin{proof}
Let $\delta=\lambda_{\ms{T}}(1-(1-b/n)^d)$, $m=\delta n^d$ and $\tau=\varepsilon n^d$,
then the Poisson concentration inequality~\cite[Chapter 2.2]{lugosi} implies that
\begin{align*}
\P(X(\Lambda_n\setminus \Lambda_{n-b}))\ge\tau)\le(m/\tau)^{\tau}e^{\tau-n}=(\delta\varepsilon^{-1})^{\varepsilon n^d} e^{(\varepsilon-\delta)n^d}\le\exp(\varepsilon n^d\log(e\delta\varepsilon^{-1})).
\end{align*}
Since $\log(e\delta\varepsilon^{-1})$ tends to $-\infty$ as $n\to\infty$, this proves the claim.
\end{proof}

Next, we show that truncating the path-loss function in the interference at a finite threshold only leads to a small error provided that the threshold is chosen sufficiently large. To be more precise, for $b\ge1$ we put $\ell_b(r)=\ell(r)$ if $r<b$ and $\ell_b(r)=0$ if $r\ge b$. Furthermore, we define 
$$\I_b(X_i,y)=w+\sum_{j\ne i}P_{X_j}F_{X_j,y}\ell_b(|X_j-y|),\quad\text{and}\quad\SIR_b(X_i,y)=\frac{P_{X_i}F_{X_i,y}\ell(|X_i-y|)}{\I_b(X_i,y)},$$
and 
$$L^b_n=\frac{1}{|\Lambda_n|}\sum_{X_i\in\Lambda_n}\delta_{Y^{(i),b}-X_i}.$$
 where $Y^{(i),b}=\{Y_j\in Y:\, \SIR_b(X_i,Y_j)\ge\t\}$ denotes the point process of \emph{$b$-connectable receivers} for the transmitter $X_i$.
We show that when using the total variation distance
$$d_{\ms{TV}}(L_n,L_n^b)=\sup_{B\in\mc{N}_f}|L_n(B)-L_n^b(B)|,$$
the random measures $\{L^b_n\}_{n\ge1}$ are exponentially good approximations of the random measures $\{L_n\}_{n\ge1}$ in the sense of~\cite[Definition 4.2.14]{dz98}.

\begin{lemma}
\label{bTruncLem}
Let $\varepsilon>0$ be arbitrary. Then,
$$\lim_{b\to\infty}\limsup_{n\to\infty}\frac{1}{|\Lambda_n|}\log\P(\d_{\ms{TV}}(L_n,L_n^b)\ge\varepsilon)=-\infty.$$
\end{lemma}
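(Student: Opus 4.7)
The plan is to bound the total variation distance by the fraction of transmitters whose connectable receiver set is altered by the truncation, and then to prove exponential concentration of that fraction at a rate which diverges as $b\to\infty$. The first reduction is immediate: for any $B\in\mc{N}_{\ms{f}}$,
\[
|L_n(B)-L_n^b(B)|\le\frac{1}{|\Lambda_n|}\#\{X_i\in\Lambda_n:Y^{(i)}\neq Y^{(i),b}\}=:\frac{N_n^b}{|\Lambda_n|},
\]
so it suffices to show that $\limsup_{n\to\infty}|\Lambda_n|^{-1}\log\P(N_n^b\ge\varepsilon|\Lambda_n|)$ tends to $-\infty$ as $b\to\infty$.

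The key step is to split the bad indicator into a local contribution and a tail contribution. Boundedness of powers and fading, together with $\I_b\ge w$ and $\ell(r)\to 0$, furnishes some $r_0$ such that $\SIR_b(X_i,y)\ge\t$ forces $|y-X_i|\le r_0$; hence every bad $X_i$ carries some $Y_j\in B_{r_0}(X_i)$ whose SINR crosses $\t$ upon truncation. Writing $T_b:=\I-\I_b$ for the tail interference and using $|\SIR-\SIR_b|\le KT_b$ for a constant $K$ depending only on the a priori bounds on $P$, $F$ and $w$, for every $\delta>0$ the crossing event at $(X_i,Y_j)$ is contained in
\[
\{T_b(X_i,Y_j)>\delta\}\ \cup\ \{\SIR_b(X_i,Y_j)\in[\t,\t+K\delta)\}.
\]
Accordingly, $N_n^b\le N_n^{b,\ms{loc}}+N_n^{b,\ms{tail}}$ where each summand counts transmitters admitting a receiver in $B_{r_0}(X_i)$ satisfying the respective alternative. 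The indicator making up $N_n^{b,\ms{loc}}$ depends only on data in $B_{b+s_{\ms{env}}(b)+r_0}(X_i)$, and conditioning on everything except $F_{X_i,Y_j}$ the Lipschitz assumption on the fading distribution yields a probability of order $\delta$ uniformly in the surrounding configuration.

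For the concentration estimate I would use a $2^d$-colouring of a grid of cubes of side comparable to $b+s_{\ms{env}}(b)+r_0$: the indicators attached to same-colour cells are independent under the Poisson structure, and a cell-wise Chernoff bound converts each local probability into an exponential tail. This handles $N_n^{b,\ms{loc}}$ directly, yielding a rate that diverges as $\delta\to0$. For $N_n^{b,\ms{tail}}$ a secondary truncation of $T_b$ at a much larger scale $B\gg b$ brings the count into the same local framework; the residual error is controlled by the explicit Laplace transform of the Poisson shot noise $T_b$, which is finite for every $s>0$ because $P$ and $F$ have bounded support and $\int\ell(|x|)\d x<\infty$. Choosing $\delta\to0$ sufficiently slowly as $b\to\infty$ (for example $\delta=1/\log b$), both rates diverge simultaneously and the lemma follows.

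The main obstacle is precisely the long-range dependence induced by the interferences: the untruncated SINR is not a finite-range functional of the Poisson input, so a naive application of the Poisson concentration inequality of Lemma~\ref{erosionLem} is unavailable. The two-stage split above---controlled on one side by the Lipschitz fading assumption (taming \emph{soft} crossings near the threshold) and on the other by the integrability of $\ell$ together with the boundedness of $P$ and $F$ (taming \emph{hard} crossings driven by atypically large tail interference)---is the technical heart of the argument and parallels the double-truncation strategy used in~\cite{adCoKo}.
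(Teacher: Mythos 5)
Your first reduction to the bad-transmitter count, the choice of $r_0$, the bound $|\SIR-\SIR_b|\le K T_b$, and the use of the Lipschitz property to get conditional probability $O(\delta)$ for a soft crossing at a fixed pair are all correct. The structural difference from the paper is that you count bad transmitters directly, whereas the paper first passes to counting \emph{pivotal receivers}: since $q_{x,\varphi}=0$ below $s_{\ms{min}}$, each receiver $Y_j$ can be $b$-connectable to at most $K=\lceil\t^{-1}s_{\ms{max}}^2/s_{\ms{min}}^2\rceil$ transmitters (competing signals at $Y_j$ appear in each other's interference), so the bad-transmitter count is at most $K$ times the pivotal-receiver count. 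Given $(X,Z)$ the pivotal receivers form a Cox point process; the Lipschitz assumption bounds its intensity by a constant times $\E[\I-\I_b\mid X,Z]$, and the Laplace functional of the Cox process, followed by that of the Poisson $X$, yields the required exponential moment in closed form. This single step does the work that your cell colouring and soft/hard split are meant to do.

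The gap in your route is the concentration argument for $N_n^{b,\ms{loc}}$ (and likewise for the secondarily truncated tail count). Cell colouring makes the cell counts $\xi_C$ for same-colour cells independent, but it gives no handle on $\E[e^{s\xi_C}]$ itself: within one cell the soft-crossing events for distinct $X_i$ are \emph{not} conditionally independent---the interferences at a shared receiver location $Y_j$ involve the same fading variables $U_{X_k,Y_j}$---and $\xi_C$ is unbounded, being as large as the Poisson count $\#(X\cap C)$. Without further structure a Cauchy--Schwarz estimate only yields $\E[e^{s\xi_C}]\le 1+O\big(e^{c|C|}\sqrt{\delta}\big)$ with $|C|\asymp(b+s_{\ms{env}}(b)+r_0)^d$, which would force $\delta$ to decay super-exponentially in $b$, and at that rate the tail-interference control $\P(T_b>\delta)$ does not improve as $b\to\infty$. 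The $K$-per-receiver observation is exactly the missing ingredient: it converts $\xi_C$ into at most $K$ times a receiver count which, given $(X,Z)$, is a Poisson thinning with intensity $O(\delta)$, and the exponential moment bound follows deterministically. Once that observation is in place, both the cell colouring and the $\delta$-split become unnecessary: the paper bounds $\P(y\text{ pivotal}\mid X,Z)$ in one step by $S\sum_i(\ell-\ell_b)(|X_i-y|)$ via Lipschitz continuity, and the resulting shot-noise Laplace transform handles the whole estimate at once, with the spatial integral split over $\Lambda_{4n}$ and its complement to take the limits $b\to\infty$ and $n\to\infty$.
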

\begin{proof}
To be specific and for notational convenience let us assume that the support of the power variables is contained in $[s_{\ms{max}}^{-1},s_{\ms{min}}^{-1}]$.
The definition of the total variation distance implies that 
$$\d_{\ms{TV}}(L_n,L_n^b)\le \frac{1}{|\Lambda_n|}\#\{X_i\in \Lambda_n: Y^{(i)}\ne Y^{(i),b}\}.$$
Next, by Lemma~\ref{erosionLem}, we only need to consider those $X_i$ that are contained in $\Lambda_{n-2r_0}$, where $r_0>0$ is chosen such that $\ell(r_0)\leq ws^2_{\ms{min}}\t$. Then, almost surely, for $X_i\in\Lambda_{n-2r_0}$ and $Y_i\in\Lambda_{n}^c$, $\SIR_b(X_i,Y_j)<\t$ for all $b\ge 1$. Consequently, it suffices to bound the number of transmitter-receiver pairs $(X_i,Y_j)\in X\times Y$ such that  $X_i\in\Lambda_{n-2r_0}$, $Y_i\in\Lambda_{n}$ and  $\SIR(X_i,Y_j)<\t\le \SIR_b(X_i,Y_j)$.
In fact, it suffices to focus on the receivers in these pairs. Indeed, let us call $Y_j$ \emph{$b$-pivotal} if there exists some transmitter $X_i$ such that the pair $(X_i,Y_j)$ has these properties.
Then, since we assumed that $q_{x,\varphi}(r)=0$ for $r\le s_{\ms{min}}$, for each receiver $Y_j$ there exist $K=\lceil \t^{-1}s^2_{\ms{max}}/s^2_{\ms{min}}\rceil$ transmitters $A(Y_j,X)=\{X_{i_1},\ldots,X_{i_K}\}$ such that $\SIR_b(X_{i},Y_j)<\t$ if $X_i\not\in A(Y_j,X)$.
Hence, it suffices to show that for every $\varepsilon>0$,
\begin{align*}
\lim_{b\to\infty}\limsup_{n\to\infty}\frac{1}{|\Lambda_n|}\log\P(\#\{Y_j\in\Lambda_n:\,Y_j\text{ is $b$-pivotal}\}\ge\varepsilon|\Lambda_n|)=-\infty.
\end{align*}
In order to do so, we use the exponential Markov inequality with $s\ge1$ and estimate
\begin{align*}
&\P(\#\{Y_j\in\Lambda_n:\,Y_j\text{ is $b$-pivotal}\}\ge\varepsilon|\Lambda_n|)\\
&\le \exp (-s\varepsilon|\Lambda_n|)\E\exp(s\#\{Y_j\in\Lambda_n:\,Y_j\text{ is $b$-pivotal}\}).
\end{align*}
Hence, it suffices to show that for every $s\ge1$,
\begin{align*}
\lim_{b\to\infty}\limsup_{n\to\infty}\frac{1}{|\Lambda_n|}\log\E\exp(s\#\{Y_j\in\Lambda_n:\,Y_j\text{ is $b$-pivotal}\})=0.
\end{align*}
The point process of receivers that are $b$-pivotal form a stationary Cox point process with random intensity measure 
$$M'(B)=\lambda_{\ms{R}}\int_B\P(\text{$y$ is $b$-pivotal}|X,Z)\d y,\qquad B\in\mc{B}(\R^d),$$
where we think of $X=\{(X_i,P_i)\}_{i\ge1}$ as a marked point process
and we evaluate the probability with respect to the fading variables associated with the pairs $(y,X_i)_{i\ge1}$.
Since $q$ is assumed to be globally Lipschitz with constant $N$, we arrive at 
\begin{align*}
&\P(\text{$y$ is $b$-pivotal}|X,Z)\\
&\quad\le \sum_{X_i\in A(y,X)}\P(\SIR(X_i,y)<\t\le \SIR_b(X_i,y)|X,Z)\\
&\quad\le \sum_{X_i\in A(y,X)}\P(F_{X_i,y}^{-1}\t^{}\in[P_{X_i}\ell(|X_i-y|)\I(X_i,y)^{-1},\,P_{X_i}\ell(|X_i-y|)\I_b(X_i,y)^{-1}]|X,Z)\\
&\quad\le \sum_{X_i\in A(y,X)}\ell(|X_i-y|)^{}Ns_{\ms{min}}^{-1}\t^{-1}w^{-2}\E(\I(X_i,y)-\I_b(X_i,y)|X,Z),
\end{align*}
which is at most 
$$S\sum_{i\ge1}\ell(|X_i-y|)-\ell_b(|X_i-y|)$$
where $S=K\ell(0)Ns_{\ms{min}}^{-3}\t^{-1} w^{-2}$.
In particular,
we obtain that 
$$M'(B)\le S'\int_B\sum_{i\ge1}\ell(|X_i-y|)-\ell_b(|X_i-y|)\d y,$$
where $S'=\lambda_{\ms{R}}S$.
Hence, using the formula for the Laplace functional of a Cox point process, we get that 
\begin{align*}
&\E\exp\big[s\#\{Y_j\in\Lambda_{2n}:\,Y_j\text{ is $b$-pivotal}\}\big]\cr
&\quad\le\E\exp\big[(e^s-1)S'\int_{\Lambda_{2n}}\sum_{i\ge1}\ell(|X_i-y|)-\ell_b(|X_i-y|)d y\big]\cr
&\quad=\exp\big[\lambda_{\ms{T}}\int_{\R^d}\exp\big((e^s-1)S'\int_{\Lambda_{2n}}\ell(|x-y|)-\ell_b(|x-y|)\d y\big)-1\d x\big].
\end{align*}
Notice, that we can bound the integral 
\begin{align*}
&\int_{\R^d}\exp\big[\tau\int_{\Lambda_{2n}}\ell(|x-y|)-\ell_b(|x-y|) \d y\big]-1\d x\\
&\le\int_{\Lambda_{4n}}\hspace{-0.1cm}\exp\big[\tau\int_{\R^d\setminus B_b(x)}\ell(|x-y|) \d y\big]-1\d x+\int_{\R^d\setminus \Lambda_{4n}}\hspace{-0.1cm}\exp\big[\tau\int_{\Lambda_{2n}}\ell(|x-y|) \d y\big]-1\d x.
\end{align*}
where $\tau:=(e^s-1)S'$.
In the next step, we derive bounds for these expressions separately. For the first, we get that 
$$\frac{1}{n^d}\int_{\Lambda_{4n}}\exp\big[\tau\int_{\R^d\setminus B_b(x)}\ell(|x-y|) \d y\big]-1\d x=4^d\big[\exp\big(\tau\int_{\R^d\setminus B_b(o)}\ell(|y|)\d y\big)-1\big]$$
which tends to zero as $b$ tends to infinity. For the second expression,  we note that for $x\in\R^d\setminus\Lambda_{4n}$ and $y\in\Lambda_{2n}$,
$$|x-y|=\frac{|x-y|+\sqrt{d}|x-y|}{1+\sqrt{d}}\ge\frac{|x-y|+|y|}{1+\sqrt{d}}\ge(1+\sqrt{d})^{-1}|x|.$$ 
Consequently, using that $\ell(r)\in o(r^{-d})$, we have for large $n$
\begin{align*}
&\frac{1}{n^d}\int_{\R^d\setminus \Lambda_{4n}}\exp\big[\tau\int_{\Lambda_{2n}}\ell(|x-y|) \d y\big]-1\d x\\
&\quad\le\frac{1}{n^d}\int_{\R^d\setminus \Lambda_{4n}}\exp\big[\tau(2n)^d\ell((1+\sqrt{d})^{-1}|x|)\big]-1\d x\\
&\quad\le\int_{\R^d\setminus \Lambda_{4n}}\tau2^{d+1}\ell((1+\sqrt{d})^{-1}|x|)\d x=\int_{\R^d\setminus \Lambda_{4n(1+\sqrt{d})^{-1}}}\tau2^{d+1}(1+\sqrt{d})^{d}\ell(|x|)\d x,
\end{align*}
which tends to zero as $n$ tends to infinity.
\end{proof}

\subsection{Application of LDP for the stationary empirical field}
\label{indEmpSec}
In order to apply~\cite[Theorem 3.1]{georgii2}, we need to relate the empirical measure of connectable receivers to the stationary empirical field considered in~\cite{georgii2}. Here the first task consists in encoding the probability space carrying the point processes of transmitters $X$, the point process of receivers $Y$, the random environment $Z$, the transmission powers $\{P_x\}$ and the iid family $\{U_{x,y}\}$ in the framework of stationary marked point processes. To be more precise, we put $\Sigma=\{\ms{E},\ms{R},\ms{T}\}$ and consider the mark space $E=\Sigma\times(0,\infty)\times[0,1]^{\N}$ equipped with some complete and separable metric. Furthermore, we let $V$ denote an independently $E$-marked homogeneous Poisson point process with intensity $\sum_{\sigma\in \Sigma}\lambda_{\sigma}$. The mark distribution on $E$ is a product of three distributions defined on the spaces $\Sigma$, $(0,\infty)$ and $[0,1]^{\N}$, respectively. First, on $\Sigma$, we choose the distribution which assigns $\sigma\in\Sigma$ the probability $\lambda_{\sigma}/(\sum_{\sigma'\in \Sigma}\lambda_{\sigma'})$. Second, on $(0,\infty)$ we choose the distribution of the transmission power $P_x$ considered in Section~\ref{intrSec}.
Third, the distribution on $[0,1]^{\N}$ describes a family of iid random variables that are uniformly distributed $[0,1]$. The Poisson point process $Z$ that generates the random environment is represented by elements of $V=(v_i,\sigma_i,P_i,(U_{i,j})_{j\ge1})_{i\ge1}$ with $\sigma_i=\ms{E}$.
Elements of $V=(v_i,\sigma_i,P_i,(U_{i,j})_{j\ge1})_{i\ge1}$ with $\sigma_i={\ms{T}}$ are thought of as transmitters and are denoted by $X$. Elements of $V=(v_i,\sigma_i,P_i,(U_{i,j})_{j\ge1})_{i\ge1}$ with $\sigma_i={\ms{R}}$ are thought of as receivers and are denoted by $Y$. 
We note that the power variables $P_i$ have not meaning if $\s_i\neq \ms{T}$. The random variables $U_{x,y}$ should be thought of as being attached to the the transmitters.
Moreover, proceeding as in~\cite[Section 1]{georgii2}, let
$$V^{\ms{per},n}=\bigcup_{s\in\Z^d}((V\cap\Lambda_n)+ns)$$
denote the periodic spatial continuation of $V\cap\Lambda_n$.
The \textit{stationary empirical field} is defined as $$R_{n,V}:=\frac{1}{|\Ln|}\int_{\L_n}\one_{V^{\ms{per},n}-v}\d v$$
where $V^{\ms{per},n}-v=\{(v_j-v,e_j)\}_{j\ge 1}$ is the spatial translation of $V^{\ms{per},n}$ by $v$.
Now, we let $Y^{\ms{per},n,b,(i)}$ denote the family of periodized receivers that have a $b$-connection to the transmitter $X_i^{\ms{per},n}=(x_i,\ms{T},P_i,(U_{i,l})_{l\ge1})$. More precisely, 
$$Y^{\ms{per},n,b,(i)}=\Big\{Y_j=(y_j,\ms{R},P_j,(U_{j,l})_{l\ge1})\in Y^{\ms{per},n}:\, \t\le \frac{P_{i}F_{x_i,y_j}\ell(|x_i-y_j|)}{w+\sum_{k\ne i}P_kF_{x_k,y_j}\ell(|x_k-y_j|)}\Big\},$$ 
where 
$$F_{x_i,y_j}=\Phi(y_j-x_i,Z^{\ms{per},n}-x_i,U_{i,\Psi(y_j-x_i,Y^{\ms{per},n}-x_i)}),$$
and where the integer $\Psi(y_j-x_i,Y^{\ms{per},n}-x_i)\ge1$ is defined as follows. If $k\ge 1$ is such that $y_j-x_i$ is the $k$-th closest element in $Y^{\ms{per},n}-x_i$ to the origin, then we put $\Psi(y_j-x_i,Y^{\ms{per},n}-x_i)=k$.
This construction will ensure translation invariance for the periodized version.
%
%
%
The empirical measure $L_n^{\ms{per},b}$ of $b$-connectable receivers associated with transmitters in $\Lambda_n$ when the network is based on periodized configurations 
can also be expressed as a function of $R_{n,V}$. Indeed, by the same technique that was used to define the individual empirical field in~\cite{georgii2}, we arrive at
\begin{align*}
L^{\ms{per},b}_n&=\frac{1}{|\L_n|}\sum_{x_i\in X^{\ms{per},n}\cap\Ln}\delta_{Y^{\ms{per},n,b,(i)}-x_i}=\frac{1}{|\L_n|}\int_{\L_n}g'(V^{\ms{per},n}-v)\d v,
\end{align*}
where 
$$g'(V^{\ms{per},n}-v)=\sum_{x_i-v\in(X^{\ms{per},n}-v)\cap\L_1}g(V^{\ms{per},n}-x_i),$$
and where $g$ is the Dirac measure concentrated on the family of $b$-connectable receivers from the origin  multiplied with the indicator function that the origin is a transmitter.
Next, we prove that the random measures $\{L^{\ms{per},b}_n\}_{n\ge1}$ and $\{L^b_n\}_{n\ge1}$ are exponentially equivalent (in the sense of~\cite[Definition 4.2.10]{dz98}), when using the total variation metric.
\begin{lemma}\label{PeriodError}
Let $\varepsilon>0$ be arbitrary. Then,
$$\lim_{n\to\infty}\frac{1}{|\Lambda_n|}\log\P(\d_{\ms{TV}}(L_n^{\ms{per},b},L_n^b)\ge\varepsilon)=-\infty.$$
\end{lemma}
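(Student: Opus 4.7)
The plan is to show that $L_n^b$ and $L_n^{\ms{per},b}$ can differ only at transmitters close to the boundary of $\Lambda_n$, whose proportion is controlled by Lemma~\ref{erosionLem}. First, I would identify an interaction range $R = R(b)$ such that the $b$-connectable receivers of any transmitter $X_i \in \Lambda_{n-2R}$ depend only on the configuration inside $B_R(X_i) \subset \Lambda_n$. Because the powers $P_x$ and the fading values $F_{x,y}$ are bounded above and bounded away from zero while $\ell(r) \to 0$, there exists a finite $r_0$ beyond which $\SIR_b(X_i, y) < \t$ automatically, so only receivers in $B_{r_0}(X_i)$ can possibly be $b$-connectable. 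The $b$-truncated interference at such a receiver only involves transmitters in $B_{b+r_0}(X_i)$, whose fading variables $F_{X_j, y}$ in turn probe $Z$ only within radius $s_{\ms{env}}(b+r_0)$ around $X_j$. Hence one may take $R := r_0 + b + s_{\ms{env}}(r_0+b)$.

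Next, for $n > 2R$ and $X_i \in \Lambda_{n-2R}$, the ball $B_R(X_i)$ lies inside $\Lambda_n$ and is disjoint from every translate $\Lambda_n + ns$ with $s\ne 0$; consequently $V \cap B_R(X_i) = V^{\ms{per},n} \cap B_R(X_i)$. I would then verify that this geometric coincidence lifts to the full stochastic construction: the environment inputs to $\Phi$ agree, and the ranking map $\Psi(y_j - X_i, \cdot)$ gives the same value in $Y - X_i$ and in $Y^{\ms{per},n} - X_i$, because no phantom receiver is closer to $X_i$ than any genuine receiver in $B_{r_0}(X_i)$. Therefore $Y^{(i),b} - X_i = Y^{\ms{per},n,b,(i)} - X_i$ for every $X_i \in X \cap \Lambda_{n-2R}$.

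Since two Dirac measures on $\mathbf{N}_{\ms{f}}$ are either equal or orthogonal, the total variation distance is at most the fraction of transmitters that can disagree:
\[
\d_{\ms{TV}}(L_n^b, L_n^{\ms{per},b}) \le \frac{X(\Lambda_n \setminus \Lambda_{n-2R})}{|\Lambda_n|}.
\]
Applying Lemma~\ref{erosionLem} with boundary width $2R$ then yields the desired super-exponential decay of $\P(\d_{\ms{TV}}(L_n^b,L_n^{\ms{per},b}) \ge \varepsilon)$.

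The main subtlety, and thus where I expect to spend the most care, is the second step: one must verify not only the agreement of the geometric point pattern but also that all quenched inputs to the fading field — the environment $Z$ and the uniform variables $U_{x,y}$, reindexed through $\Psi$ — survive the periodization unchanged. The locality assumption on $\Phi$ is what makes this go through, provided $R$ dominates every relevant interaction range.
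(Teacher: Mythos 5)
Your proof is correct and follows essentially the same approach as the paper's: identify a finite interaction range $R = R(b)$ such that the $b$-connectable receivers of transmitters in $\Lambda_{n-2R}$ are unaffected by periodization, bound $\d_{\ms{TV}}(L_n^{\ms{per},b},L_n^b)$ by $X(\Lambda_n\setminus\Lambda_{n-2R})/|\Lambda_n|$, and invoke Lemma~\ref{erosionLem}. The only difference is cosmetic: you take $R = r_0 + b + s_{\ms{env}}(r_0+b)$ where the paper uses $r_0 + b + s_{\ms{env}}(b)$; your slightly larger radius is harmless (and arguably more careful, since the signal fading $F_{X_i,Y_j}$ itself probes the environment within $s_{\ms{env}}(r_0)$ of $X_i$, which $s_{\ms{env}}(r_0+b)$ dominates). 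Your extra attention to the ranking map $\Psi$ surviving periodization is a valid point the paper glosses over, and it goes through exactly as you say because no periodized phantom receiver can lie within $B_{r_0}(X_i)$ when $X_i\in\Lambda_{n-2R}$.
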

\begin{proof}
As in Lemma~\ref{bTruncLem}, choose $r_0\ge1$  such that $\ell(|x-y|)\le ws^2_{\ms{min}}\t$ if $|x-y|\ge r_0$. In particular $Y^{(i),b}\subset B_{r_0}(X_i)$. 
Further by the truncation of the interference, to decide whether $Y_j\in Y^{(i),b}$ it suffices to look at transmitters in $B_b(Y_j)$. As a consequence, the family $Y^{(i),b}$  depends only on the network configuration in $B_{r_0+b+s_{\ms{env}}(b)}(X_i)$.
Hence, $\d_{\ms{TV}}(L_n^{\ms{per},b},L_n^b)\le \#(X\cap \Lambda_n\setminus \Lambda_{n-2(r_0+b+s_{\ms{env}}(b))})$, and the claim follows from Lemma~\ref{erosionLem}.
\end{proof}

Now, we are in a position to provide a proof for the LDP asserted in Theorem~\ref{ldpThm} when $L_n$ is replaced by $L^b_n$. Let $\Q$ be the distribution of some stationary $E$-marked point process $V=(v_i,\sigma_i,P_i,(U_{i,j})_{j\ge1})_{i\ge1}$. Then, we define $\Q^{*,b}$ as the Palm mark measure of the marked point process $(X_i,Y^{(i),b}-X_i)$. Here, as above, $X_i\in V$ are interpreted as transmitters and $Y^{(i),b}\subset V$ as the $b$-connectable receivers.
\begin{proposition}
\label{ldpBProp}
The random measures $\big\{L^b_n\}_{n\ge1}$ satisfy an LDP in the $\tau$-topology with rate $|\Lambda_n|$ and good rate function 
$$\mc{I}^b:Q\mapsto\inf_{\substack{\Q\in\mc{P}_\theta\\ \Q^{*,b}=Q}}h(\Q|\P).$$
\end{proposition}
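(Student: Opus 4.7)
The plan is to reduce the assertion to the classical level-3 LDP for marked Poisson point processes of Georgii--Zessin~\cite[Theorem 3.1]{georgii2} via the contraction principle, using the finite-range structure built into the $b$-truncated model. First, by Lemma~\ref{PeriodError}, the periodized random measures $\{L_n^{\ms{per},b}\}_{n\ge1}$ are exponentially equivalent to $\{L_n^b\}_{n\ge1}$, and exponential equivalence in total variation is stronger than exponential equivalence in the $\tau$-topology. By~\cite[Theorem 4.2.13]{dz98}, it therefore suffices to establish the LDP for $L_n^{\ms{per},b}$.

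The crucial structural observation, already used in defining $L_n^{\ms{per},b}$, is that this random measure can be written as $\Psi^b(R_{n,V})$, where $R_{n,V}$ is the stationary empirical field of the marked point process $V$ and $\Psi^b:\mc{P}_\theta\to\mc{M}_{\ms{f}}(\mathbf{N}_{\ms{f}})$ is the Palm-mark functional sending $\Q$ to $\Q^{*,b}$. By~\cite[Theorem 3.1]{georgii2}, $\{R_{n,V}\}_{n\ge1}$ satisfies an LDP on $(\mc{P}_\theta,\tau_\mc{L})$ with rate $|\Lambda_n|$ and good rate function $h(\cdot|\P)$. Provided $\Psi^b$ is continuous from $(\mc{P}_\theta,\tau_\mc{L})$ to $(\mc{M}_{\ms{f}}(\mathbf{N}_{\ms{f}}),\tau)$, the contraction principle then produces an LDP for $L_n^{\ms{per},b}$ with good rate function $Q\mapsto\inf\{h(\Q|\P):\Q^{*,b}=Q\}$, which matches the statement.

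The main and essentially only obstacle is the verification of continuity of $\Psi^b$. By the refined Campbell theorem, for a Borel set $B\subset\mathbf{N}_{\ms{f}}$ one has
\[
\Psi^b(\Q)(B)=\int_{\mathbf{N}_E}\#\big\{(x_i,\sigma_i,\ldots)\in\varphi\cap([0,1]^d\times\{\ms{T}\}\times\cdots):\,Y^{(i),b}-x_i\in B\big\}\,\Q(\d\varphi).
\]
As observed in the proof of Lemma~\ref{PeriodError}, for a transmitter at $X_i$ the family $Y^{(i),b}$ is determined entirely by the restriction of $V$ to $B_{r_0+b+s_{\ms{env}}(b)}(X_i)$, so the integrand is a bounded tame function of the configuration. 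Consequently $\Q\mapsto\Psi^b(\Q)(B)$ is continuous in $\tau_\mc{L}$ for every fixed $B$, and a standard approximation argument extends continuity to integrals $\mu\mapsto\int F\,\d\mu$ against bounded measurable $F$ on $\mathbf{N}_{\ms{f}}$, which is the class generating the $\tau$-topology on $\mc{M}_{\ms{f}}$. The hard part is keeping track of the right classes of test functions at the two levels and establishing this continuity uniformly; finiteness of $b$ is what makes it work, and it is precisely the reason why the truncation step in Section~\ref{truncPSec1} was needed.

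Once continuity is in place, the contraction principle delivers the LDP for $L_n^{\ms{per},b}$ with the asserted rate function. Goodness and lower semi-continuity are automatic from the contraction principle applied to the good rate function $h(\cdot|\P)$, together with the fact that the infimum in the definition of $\mc{I}^b$ is attained on the compact level sets of $h(\cdot|\P)$. Transferring back to $L_n^b$ via exponential equivalence completes the proof of Proposition~\ref{ldpBProp}.
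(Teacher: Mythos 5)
Your proposal follows exactly the route the paper takes: transfer the problem to the periodized measures $\{L_n^{\ms{per},b}\}$ by the exponential equivalence established in Lemma~\ref{PeriodError}, write $L_n^{\ms{per},b}$ as the image of the stationary empirical field under the Palm-mark map $\Q\mapsto\Q^{*,b}$, verify that this map is $\tau_{\mc{L}}$-to-$\tau$ continuous thanks to the finite-range structure guaranteed by the truncation (the deciding observation also used in Lemma~\ref{PeriodError}), and then apply the contraction principle to the LDP for the stationary empirical field from~\cite{georgii2}. The only cosmetic differences are the order in which you perform the periodization transfer and the precise references you invoke for the standard tools (you cite~\cite[Theorem 3.1]{georgii2} and~\cite[Theorem 4.2.13]{dz98} where the paper cites~\cite[Corollary 3.2]{georgii2} and~\cite[Corollary 1.10, Remark 1.4]{eiSchm}); the substance of the argument is identical.
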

\begin{proof}

First, we note that the map $\Phi_b:\Q\mapsto \Q^{*,b}$ is continuous with respect to the $\tau$-topology. Indeed, for $\Q_n\to\Q$ and $B\in\mc{N}_f$, the locality that is established after truncating the interferences gives that $|\Q^{*,b}(B)-\Q^{*,b}_n(B)|\to0$ as $n\to\infty$.
As $\Phi_b(R_{n,V})=L^{\ms{per},b}_n$, we can apply~\cite[Corollary 3.2]{georgii2} and the contraction principle. Thus the random measures $\big\{L_n^{\ms{per},b}\}_{n\ge1}$ satisfy an LDP with good rate function $\mc{I}_b$. 
Finally, Lemma~\ref{PeriodError} shows that $\{L^{\ms{per},b}_n\}_{n\ge1}$ and $\{L^b_n\}_{n\ge1}$ are exponentially equivalent with respect to the total variant distance. This implies exponential equivalence of $\{L^{\ms{per},b}_n\}_{n\ge1}$ and $\{L^b_n\}_{n\ge1}$ when evaluated on an arbitrary Borel subset of $\mathbf{N}_{\ms{f}}$. So the claim follows from~\cite[Corollary 1.10, Remark 1.4]{eiSchm}.
\end{proof}
The same arguments also prove the following result, where we consider the marked point process $(X_i,Y^{(i),b}-X_i,Y^{(i),b'}-X_i)$ at different truncation thresholds $b'>b\ge1$. Starting from $\Q\in\mc{P}_\theta$, the associated Palm mark distribution is denoted by $\Q^{*,b,b'}$.
\begin{lemma}
\label{ldpBProp2}
Let $b'>b\ge1$. Then, the random variables $\big\{\tfrac{1}{|\L_n|}\#\{{X_i\in\L_n}:\, Y^{(i),b'}\ne Y^{(i),b}\}\}_{n\ge1}$ satisfy an LDP with rate $|\Lambda_n|$ and good rate function 
$$s\mapsto \inf_{\substack{\Q\in\mc{P}_\theta\\ \Q^{*,b,b'}(Y^{(o),b'}\ne Y^{(o),b})=s}}h(\Q|\P).$$
\end{lemma}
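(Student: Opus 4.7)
The plan is to follow the scheme of Proposition~\ref{ldpBProp} almost verbatim, with the single transmitter mark $Y^{(i),b}$ replaced by the pair $(Y^{(i),b},Y^{(i),b'})$, and then to contract the resulting measure-valued LDP down to the real-valued statistic via an evaluation map. First I would enlarge the mark space to $\mathbf{N}_{\ms{f}}\times\mathbf{N}_{\ms{f}}$ and define the empirical measure
$$L^{b,b'}_n=\frac{1}{|\L_n|}\sum_{X_i\in\L_n}\delta_{(Y^{(i),b}-X_i,\,Y^{(i),b'}-X_i)},$$
together with its periodized analogue $L^{\ms{per},b,b'}_n$. Exactly as in the proof of Proposition~\ref{ldpBProp}, the latter is a measurable functional of the stationary empirical field $R_{n,V}$ of Section~\ref{indEmpSec}, pushed forward through a Palm-type map $\Phi_{b,b'}\colon\Q\mapsto\Q^{*,b,b'}$.

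The next step is to verify that $\Phi_{b,b'}$ is $\tau$-continuous, which reduces to the same locality observation used for Lemma~\ref{PeriodError}: after truncating the interference at distance $b'$, both families $Y^{(i),b}$ and $Y^{(i),b'}$ are determined by the network configuration inside $B_{r_0+b'+s_{\ms{env}}(b')}(X_i)$, where $r_0$ is chosen so that $\ell(r_0)\le w s_{\ms{min}}^2\t$. Combining the level-3 LDP~\cite[Theorem~3.1]{georgii2} with~\cite[Corollary~3.2]{georgii2} and the contraction principle then yields an LDP for $\{L^{\ms{per},b,b'}_n\}$ in the $\tau$-topology with good rate function $Q\mapsto\inf\{h(\Q|\P)\colon \Q\in\mc{P}_\theta,\ \Q^{*,b,b'}=Q\}$. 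Exponential equivalence between $L^{\ms{per},b,b'}_n$ and $L^{b,b'}_n$ in total variation follows from the proof of Lemma~\ref{PeriodError} after enlarging the erosion radius to $r_0+b'+s_{\ms{env}}(b')$, and~\cite[Corollary~1.10, Remark~1.4]{eiSchm} then transfers the LDP to $\{L^{b,b'}_n\}$.

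The final step is a single further contraction. Writing $A_{\ne}=\{(\varphi,\varphi')\in\mathbf{N}_{\ms{f}}\times\mathbf{N}_{\ms{f}}\colon\varphi\ne\varphi'\}$, which is a bounded Borel set in the enriched state space, the random variable in the lemma is
$$\frac{1}{|\L_n|}\#\{X_i\in\L_n\colon Y^{(i),b'}\ne Y^{(i),b}\}=L^{b,b'}_n(A_{\ne}).$$
Evaluation at $A_{\ne}$ is $\tau$-continuous on $\mc{M}_{\ms{f}}(\mathbf{N}_{\ms{f}}\times\mathbf{N}_{\ms{f}})$, so the contraction principle produces the claimed LDP with rate function $s\mapsto\inf\{h(\Q|\P)\colon \Q\in\mc{P}_\theta,\ \Q^{*,b,b'}(A_{\ne})=s\}$, which matches the statement since $\Q^{*,b,b'}(A_{\ne})=\Q^{*,b,b'}(Y^{(o),b'}\ne Y^{(o),b})$.

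The main obstacle is purely bookkeeping: one has to check that the locality, truncation and boundary-erosion estimates from Lemmas~\ref{erosionLem}, \ref{bTruncLem} and~\ref{PeriodError} really go through for the enriched marks. Since $b'>b$, the larger truncation radius $b'$ simultaneously controls $Y^{(i),b}$ and $Y^{(i),b'}$, so no new estimate is required and the arguments apply mutatis mutandis.
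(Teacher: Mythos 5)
Your proposal is correct and follows exactly the route the paper has in mind: the paper's own "proof" of Lemma~\ref{ldpBProp2} is the single remark that the arguments of Proposition~\ref{ldpBProp} carry over with the enriched marks $(X_i,Y^{(i),b}-X_i,Y^{(i),b'}-X_i)$, and you have spelled out precisely that enrichment (enlarged mark space, periodization, $\tau$-continuity of the Palm map via $b'$-locality, exponential equivalence via an enlarged erosion radius) together with the final, implicitly required contraction via the evaluation functional $\mu\mapsto\mu(A_{\ne})$, which the paper leaves tacit. The only cosmetic remark is that you assert $A_{\ne}$ is a \emph{bounded} Borel set without comment, but this is consistent with the paper's own convention, since the continuity check in Proposition~\ref{ldpBProp} is carried out for arbitrary $B\in\mc{N}_{\ms f}$.
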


Finally, we complete the proof of Theorem~\ref{ldpThm}. In Lemma~\ref{bTruncLem}, we showed that $\{L^b_n\}_{n\ge1}$ are exponentially good approximations of 
$\{L_n\}_{n\ge1}$ and hence an application of~\cite[Theorem 1.13]{eiSchm} is natural. 
%

\begin{proof}[Proof of Theorem~\ref{ldpThm}]
As mentioned in the previous paragraph,~\cite[Theorem 1.13]{eiSchm} implies that it suffices to verify the following condition. For every $\varepsilon,K>0$ there exists $b\ge1$ such that 
$$\sup_{\substack{\Q\in\mc{P}_\theta\\ h(\Q|\P)\le K}}\d_{\ms{TV}}(\Q^*,\Q^{*,b})\le\varepsilon.$$
 We show that a slightly stronger statement holds, where $\d_{\ms{TV}}(\Q^*,\Q^{*,b})$ is replaced by $\Q^*(Y^{(o),b}\ne Y^{(o)})$. 

Lemma~\ref{bTruncLem} shows that there exists $b_0\ge1$ such that if $b'>b\ge b_0$, then 
$$\limsup_{n\to\infty}|\L_n|^{-1}\log\P(\#\{X_i\in\L_n:\,Y^{(i),b'}\ne Y^{(i),b}\}>\varepsilon|\L_n|)\le -K.$$
Hence, the LDP from Lemma~\ref{ldpBProp2} yields that 
$$\inf_{\substack{\Q_0\in\mc{P}_\theta\\ \Q^{*,b,b'}_0(Y^{(o),b'}\ne Y^{(o),b})>\varepsilon}}h(\Q|\P)>K.$$
In particular, if $h(\Q|\P)\le K$, then $\Q^{*,b',b}(Y^{(o),b'}\ne Y^{(o),b})\le\varepsilon$, as required.
\end{proof}

\section{Proof of Theorem~\ref{lowTauProbThm}}
\label{sec3}
The difficulty in proving Theorem~\ref{lowTauProbThm} is that the connectable receivers associated with the origin are not stationary, so that we cannot use LDPs for the stationary empirical field directly. Therefore, we first consider a more general stationary marked point process from which the connectable receivers can be reproduced by an application of the contraction principle. Since we need a path-space LDP for this stationary marked point process, we proceed as in the classical proof of Mogulskii's Theorem~\cite[Theorem 5.3.1]{dz98} and use the Dawson-G\"artner technique to deduce the path-space LDP from the finite-dimensional marginals.

In order to define a suitable auxiliary stationary marked point process, we consider the random measure 
$$M^\t(\cdot)=\lambda_{\ms{R}}\int_{\L_\t'}\int_{0}^\infty\one\{\cdot\}\nu_y(\d s)\d y,$$
where $\nu_y([0,s])=\Gamma(s,y)=\E(q(sI(y)^{-1})|X)$ see also \eqref{Gamma}. Then, we let $Z^\t=\{(Y_j,S_j)\}$ denote a Cox process with this random intensity measure and
define the two-parameter field $Y^{*,\t}=\{Y^{*,\t}(x,s)\}_{(x,s)\in\L_1'\times[0,\infty)}$ by 
$$Y^{*,\t}(x,s)=Z^\t(\L_\t(x)\times(0,s]),$$
where 
$$\L_t(\xi^1,\ldots,\xi^d)=t^{-\beta}\prod_{i=1}^d[-|\L_1'|^{1/d}/2,\xi^i].$$
In particular, for any fixed $(x,s)\in\L_1'\times[0,\infty)$, conditioned on $X$ the random variable $Y^{*,\t}(x,s)$ is Poisson-distributed with parameter $\lambda_{\ms{R}}\int_{\L_\t(x)}\Gamma(s,y)\d y$. Moreover, $Y^{*,t}$ is a random variable with values in $L_{\ms{inc}}$, the space of $[0,\infty)$-valued, bounded and coordinate-wise increasing functions on $\L_1'\times[0,\infty)$. 

In the following, we put $\mu_\Q(s)=\lambda_{\ms{R}}\Q(\Gamma(s,o))$ and note that the derivative $\tfrac{\d}{\d s}\mu_\Q(s)$ exists since $q$ is differentiable and $\tfrac{\d}{\d s} q(s)$ is Lipschitz continuous with Lipschitz constant $N$.

Similar to \cite[Section 5.3]{dz98} we introduce the notion of absolute continuity for increasing functions $F: \L'_1\times[0,\infty)\to[0,\infty), (x,s)\mapsto F(x,s)$.
For the convenience of the reader, we reproduce some of these definitions and observations. 
$F$ defines a additive set-function on the set of cubes. More precisely for $\Lambda = (a_1,b_1]\times(a_2,b_2]\times\cdots\times(a_d,b_d]\times(a_{d+1},b_{d+1}]$ we will sometimes write $F(\Lambda):=\sum_u\s(u)F(u)$ with $\s(u):=(-1)^\r$ where $\r=\#\{k:u_k=a_k \}$ and the summation extends of all corners $u$ of $\Lambda$ see~\cite[Chapter 3]{kallenberg}. It follows from Carath\'eodory's extension theorem, that any right-continuous $F\in L_{\ms{Inc}}$ induces a unique measure $\mu_F$ on $\L_1'\times[0,\infty)$ with the Borel sigma-algebra satisfying 
$$\mu_F(\prod_{i=1}^d[-|\L_1'|^{1/d}/2,\xi^i]\times[0,s])=F(\xi^1,\ldots,\xi^d,s),$$
for  any $s\ge0$ and $(\xi^1,\ldots,\xi^d)\in\L_1'$, see~\cite[Theorem 3.25]{kallenberg}.

The function $F$ is called absolutely continuous if $F$ is right-continuous and $\mu_{F}$ is absolutely continuous with respect to the Lebesgue measure on $\L_1'\times[0,\infty)$. We write $\partial F/(\partial x\partial s)$ for its Radon-Nikodym derivative.
Let
\begin{equation*}
\begin{split}
AC_{0}^1:=\{F:\,&F \text{ is absolutely continuous, $F(x,0)=0$ and $F(-|\L_1'|^{1/d}/2,\xi^2,\dots,\xi^d,s)=$} \cr
&=F(\xi^1,-|\L_1'|^{1/d}/2,\dots,\xi^d,s)=\cdots=F(\xi^1,\dots,\xi^{d-1},-|\L_1'|^{1/d}/2,s)=0 \}.
\end{split}
\end{equation*}
In Section~\ref{cpSec}, we will derive Theorem~\ref{lowTauProbThm} by the contraction principle from the following result. 
\begin{proposition}
\label{genLDP}
The random fields $\big\{|\L_\t'|^{-1}Y^{*,\t}(\cdot,\cdot)\}_{\t<1}$ satisfy an LDP in the topology of pointwise convergence with rate $|\L_\t'|$ and good rate function given by 
\begin{align*}
\mc{I}(F)=\begin{cases}
  \int_{\L'_1}\mc{I}^*\big(\frac{\partial F}{\partial y\partial s}(y,\cdot)\big)\d y  & \text{if }F\in AC_{0}^1,\\
  \infty & \text{otherwise,}
\end{cases}
\end{align*}
where
\begin{align}\label{SingleSiteRateFunction2}
\mc{I}^*(g)=
\inf_{\Q\in\mc{P}_\theta} h(\Q|\P)+\int_{0}^\infty h\big(g(s)\big|\tfrac{\d}{\d s}\mu_\Q(s)\big)\d s.
\end{align}
\end{proposition}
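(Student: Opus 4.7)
The plan is to derive Proposition~\ref{genLDP} via the Dawson--G\"artner projective-limit technique announced in the paragraph preceding the statement. Concretely, I would fix arbitrary grids $\xi^i_0<\xi^i_1<\cdots<\xi^i_{M_i}$ in each coordinate direction on $\L_1'$ --- yielding cells $C_1,\ldots,C_M$ --- and a subdivision $0=s_0<s_1<\cdots<s_K<\infty$ of the mark axis. Since the topology of pointwise convergence on $L_{\ms{inc}}$ coincides with the projective limit of evaluation at such finite point grids, an LDP for the vectors $(|\L_\t'|^{-1}Y^{*,\t}(\xi_j,s_k))_{j,k}$ with a compatible family of rate functions lifts via \cite[Theorem~4.6.1]{dz98} to the required infinite-dimensional LDP, and refining the partitions identifies the integral form of $\mc{I}$.

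The finite-dimensional LDP is built in two layers. Conditional on the marked transmitter process $X$, the disjoint increments $Z^\t(\t^{-\beta}C_j\times(s_{k-1},s_k])$ are independent Poisson random variables with parameters $\lambda_{\ms R}\int_{\t^{-\beta}C_j}(\Gamma(s_k,y)-\Gamma(s_{k-1},y))\d y$. On the outer layer, I would apply the level-3 LDP of \cite[Theorem~3.1]{georgii2} separately on each scaled cell: it governs the stationary empirical field of the power-marked transmitter process with rate function $|\t^{-\beta}C_j|h(\Q_j|\P)$, and ergodicity of $\Q_j$ forces the cell-wise averages of $\Gamma$ to concentrate at $\mu_{\Q_j}(s_k)-\mu_{\Q_j}(s_{k-1})$. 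On the inner layer, given this concentration, the normalized Poisson counts obey a conditional Cram\'er LDP with Poisson relative entropy $h(\alpha|\beta)=\alpha\log(\alpha/\beta)-\alpha+\beta$. Combining both layers via a standard mixture/Laplace argument yields the partition-level rate function as an infimum over cell-wise laws $\Q_j$ of $h(\Q_j|\P)$ plus the sum of Poisson entropies. Letting the partition tend to the continuum, the convergence of this rate function to $\int_{\L_1'}\mc{I}^*(\partial F/(\partial y\partial s)(y,\cdot))\d y$ on $AC_0^1$ follows from Lipschitz continuity of $s\mapsto \mu_\Q(s)$ (guaranteed by the Lipschitz condition on $q$), while non--absolutely-continuous $F$ are forced to have infinite rate by the standard Mogulskii-type argument based on the uniform bound $\mu_\Q(s)\le\lambda_{\ms R}$.

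The principal obstacle is exactly the issue addressed in Section~\ref{truncPSec1}: $\Gamma(s,y)=\E(q(s/I(y))\mid X)$ depends on transmitters arbitrarily far from $y$ via the interference, so the cellular decoupling required for the outer-layer LDP holds only approximately. I would handle this by replacing $\ell$ with $\ell_b$ in the interference, verifying that the truncated model is exponentially equivalent to the original in the sense of~\cite[Definition~4.2.14]{dz98} uniformly in $\t$ (using the Lipschitz bound on $q$ together with $\ell(r)\in o(r^{-d})$, along the lines of Lemma~\ref{bTruncLem}), establishing the LDP for the truncated model where $\Gamma_b(s,y)$ depends only on $X\cap B_b(y)$ so that the cell-wise product structure is exact up to a boundary layer of thickness $b$, and finally letting $b\to\infty$ via a tightness argument analogous to the endgame of Theorem~\ref{ldpThm} to remove the truncation from the variational formula.
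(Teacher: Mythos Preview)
Your proposal is essentially the paper's own strategy: finite-dimensional LDPs via a two-layer (Poisson--Cram\'er inside, Georgii--Zessin outside) argument after truncating the interference, then Dawson--G\"artner, then a Mogulskii-type identification of the rate function. The one technical device you underspecify is how the scaled cells become genuinely independent: the paper does not rely on a ``boundary layer of thickness $b$'' argument but instead \emph{periodizes} the transmitter configuration separately on each cell $\t^{-\beta}C_j$ (Lemma~\ref{bApprLem2}), so that the cell-wise functionals depend on disjoint portions of $X$ and are exactly independent; exponential equivalence of the periodized and non-periodized versions is then a Poisson-concentration estimate. Your boundary-layer heuristic would ultimately work too, but the periodization makes the product structure clean and lets one invoke \cite[Exercise~4.2.7]{dz98} directly to sum the cell-wise rate functions.
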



\subsection{Finite-dimensional result}
In order to apply the Dawson-G\"artner Theorem~\cite[Theorem 4.6.1]{dz98}, we first derive the finite-dimensional LDPs.

\begin{proposition}
\label{finDimLem}
Let $-|\L_1'|^{1/d}/2=\xi_0<\xi_1<\cdots<\xi_k\le|\L_1'|^{1/d}/2$ and $0=s_0< s_1<\cdots<s_r$. Furthermore, put $\Xi=\{\xi_0,\xi_1,\ldots,\xi_k\}$ and $S=\{s_0,s_1,\ldots,s_r\}$. Then, the random variables
$\big\{\big(|\L_\t'|^{-1}Y^{*,\t}(x,s)\big)_{(x,s)\in\Xi^d\times S}\big\}_{\t<1}$ satisfy an LDP with rate $|\L_\t'|$ and good rate function
$$\mc{I}_{\Xi,S}(F)=\sum_{x\in \Xi^d}|\L_\Xi(x)|\inf_{\Q\in\mc{P}_\theta}h(\Q|\P)+\sum_{i=1}^{r}h\Big(\frac{1}{|\L_\Xi(x)|}F(\L_\Xi(x)\times(s_{i-1},s_i])\big|\Delta\mu_{\Q}(s_{i})\Big),$$
where $\Delta\mu_{\Q}(s_{i})=\mu_{\Q}(s_{i})-\mu_{\Q}(s_{i-1})$, and $\L_\Xi(\xi_{i_1},\ldots,\xi_{i_d})=\prod_{j=1}^d(\xi_{i_j},\xi_{i_j+1}]$ with $(\xi_{i_1},\ldots,\xi_{i_d})\in\Xi^d$.
\end{proposition}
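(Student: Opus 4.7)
The plan is to deduce this finite-dimensional LDP by combining two ingredients: the level-$3$ LDP of~\cite[Theorem 3.1]{georgii2} for the stationary empirical field of the marked transmitter process $X$, and Cram\'er's theorem for sums of independent Poisson random variables. The key observation is that, conditionally on $X$, the increments $\Delta Y^{*,\t}(x,i):=Y^{*,\t}(\L_\Xi(x)\times(s_{i-1},s_i])$ are mutually independent Poisson random variables supported on the disjoint rectangles $\t^{-\beta}\L_\Xi(x)\times(s_{i-1},s_i]$, with parameters $\lambda_{\ms{R}}\int_{\t^{-\beta}\L_\Xi(x)}\big(\Gamma(s_i,y)-\Gamma(s_{i-1},y)\big)\d y$. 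This follows directly from the Cox representation of $Z^\t$.

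I will first decouple the spatial boxes $B_x:=\t^{-\beta}\L_\Xi(x)$. Because $X$ is Poisson, the restrictions $X\cap B_x$ are independent across $x$; the only nonlocal coupling comes through $\Gamma(s,\cdot)$, which depends on far-away transmitters via the non-compactly supported $\ell$. To remedy this I would apply a path-loss truncation analogous to Lemma~\ref{bTruncLem}, replacing $\ell$ by $\ell_b$ for some large $b$ and showing that the resulting modification of $Y^{*,\t}$ is exponentially equivalent to the original. After truncation $\Gamma(s,y)$ becomes a bounded-range functional of $X$ near $y$, the box LDPs decouple at the exponential scale, and it suffices to establish the LDP on a single box $B_x$ and then sum the single-box contributions to recover the $\sum_x$ structure of $\mc{I}_{\Xi,S}$.

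On a single box the plan is to establish a joint LDP for the stationary empirical field $R_{B_x,X}$ of the transmitters on $B_x$ together with the normalized increments $(\Delta Y^{*,\t}(x,i)/|B_x|)_i$. The marginal LDP for $R_{B_x,X}$ at speed $|B_x|$ with rate function $h(\,\cdot\,|\P)$ is exactly~\cite[Theorem 3.1]{georgii2}. Conditional on $R_{B_x,X}\approx\Q$, an ergodic estimate for the (truncated) bounded-range functional $y\mapsto\Delta\Gamma(s_i,y)$ shows that $|B_x|^{-1}\lambda_{\ms{R}}\int_{B_x}\Delta\Gamma(s_i,y)\d y\to\Delta\mu_\Q(s_i)$, and then Cram\'er's LDP for the independent Poisson counts contributes the rate $h(\,\cdot\,|\Delta\mu_\Q(s_i))$, summed over $i$ by conditional independence. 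Together these give a joint LDP with rate $h(\Q|\P)+\sum_i h(y_i|\Delta\mu_\Q(s_i))$; contracting onto the Poisson coordinates and converting from speed $|B_x|$ to speed $|\L_\t'|$ via the identity $|B_x|/|\L_\t'|=|\L_\Xi(x)|/|\L_1'|$ and the $1$-homogeneity $c\,h(y|\mu)=h(cy|c\mu)$ produces the per-box term $|\L_\Xi(x)|\inf_{\Q\in\mc{P}_\theta}\big[h(\Q|\P)+\sum_i h(F_{x,i}/|\L_\Xi(x)| | \Delta\mu_\Q(s_i))\big]$ of $\mc{I}_{\Xi,S}$.

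The main technical obstacle is the joint-LDP step, which requires that the variation of the empirical field of $X$ and the resulting variation of the Poisson parameters be analyzed simultaneously. Concretely, I need the map $\Q\mapsto\Q(\Delta\Gamma(s_i,o))$ to be continuous in the topology in which the Georgii LDP holds, and the averages $|B_x|^{-1}\int_{B_x}\Delta\Gamma(s_i,y)\d y$ to depend continuously on $R_{B_x,X}$. Both reduce to $\Gamma(s,\cdot)$ being a bounded, Lipschitz, essentially local functional of $X$; this holds after path-loss truncation thanks to the Lipschitz and support hypotheses on $q$ stated at the start of Section~\ref{sec3}, which also guarantee the uniform control needed to pass the approximation through the infimum over $\Q$.
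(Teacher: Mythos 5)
Your outline captures the right strategy at the level of ideas (truncate $\ell$, exploit Poisson independence across boxes, combine a Georgii-type LDP for $X$ with a Poisson LDP per box, undo truncation), and the per-box rate you arrive at is the correct one. But there are two places where the proposal is materially thinner than what is needed.

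First, \emph{periodization is not optional}. The stationary empirical field in~\cite[Theorem~3.1]{georgii2} is the field built from a periodic extension of $X\cap B_x$, and the paper inserts an explicit periodization step (Lemma~\ref{bApprLem2}) precisely because (a) that is the object Georgii's theorem handles, and (b) even after truncation at radius $b$, the functional $y\mapsto\Gamma^b(s,y)$ for $y$ within distance $b$ of $\partial B_x$ still depends on transmitters in the neighbouring boxes, so the per-box increments are \emph{not} independent. You assert that ``the box LDPs decouple at the exponential scale'' after truncation, but that decoupling is exactly what periodization manufactures; without it you would still need to control the boundary coupling, and the most efficient route is the one the paper takes, which replaces the configuration in $B_x$ by its periodic extension, shows the replacement is exponentially negligible via a Poisson concentration estimate, and then gets \emph{exact} independence across $x$.

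Second, the ``joint LDP for $(R_{B_x,X},\,\Delta Y^{*,\tau}/|B_x|)$ followed by contraction'' is the one step that differs in method from the paper, and it is also the one you leave most sketchy. The paper instead integrates out the Poisson fluctuations first: conditionally on $X$, it bounds $\P(\cdot)$ above and below by $\exp(-|\L_\tau(x,x')|\,h(\cdot\,|\,M_{\mathsf{av},\tau}))$ up to polynomial corrections (using explicit Poisson tail bounds rather than Cram\'er), and then handles the $X$-fluctuation via the Varadhan--Laplace form of~\cite[Theorem~3.1]{georgii2} applied to the function $\Q\mapsto \inf_a h(a\,|\,\mu^b_{\Q}(s')-\mu^b_{\Q}(s))$; see Lemma~\ref{coxIntLDP} and Proposition~\ref{lowTauProbLem}. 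That route avoids ever having to establish exponential tightness for the joint pair and to verify that the contraction map is continuous on the relevant state space; if you want to pursue the joint-LDP route, you would need to supply both of those verifications, and you would also need a genuine conditional LDP uniformly over empirical fields close to $\Q$, which is more than a single ergodic estimate. Finally, note that reversing the truncation is not a one-line appeal to uniformity: the paper spends Lemma~\ref{qContLem}, Corollary~\ref{qContCor}, and a diagonal/compactness argument with~\cite[Theorem~4.2.16]{dz98} on showing $\mc{I}'_{\Xi,S}=\mc{I}_{\Xi,S}$, and you will need a comparable amount of lower-semicontinuity bookkeeping to pass the $b\to\infty$ limit through the infimum over $\Q$.
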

The basic idea of proof for Proposition~\ref{finDimLem} is to apply the LDP for the stationary empirical field~\cite[Theorem 3.1]{georgii2}. However, in order to cast our problem into a suitable framework, we first have to perform a truncation and a periodization step.

\subsubsection{Truncation of the path-loss function. }
\label{truncP2Sec}
In a first step, we show that truncation of the path-loss function gives an exponentially good approximation. Let $b\ge1$, $s'\ge s\ge0$ and $x,x'\in\L_1$ be such that all coordinates of $x'-x$ are positive. Then, we let $Y^{*,b,\t}(x,x',s,s')$ denote a random variable that conditioned on the independently marked Poisson particle process $X$ is Poisson distributed with parameter $\lambda_{\ms{R}}\int_{\L_\t({x,x'})}\Gamma^b(s',y)-\Gamma^b(s,y)$, where $\Gamma^b(s,y)=\E(s\I^b(y)^{-1}|X)$, 
$$\L_\t({x,x'})=\t^{-\b}\L(x,x')=\t^{-\b}\prod_{i=1}^d(\pi_k(x),\pi_k(x')],$$
and $\pi_k:\R^d\to\R$ denotes the projection onto the $k$th coordinate. From now on let us again assume that the support of the power variables is contained in $[0,s_{\ms{min}}^{-1}]$.

\begin{lemma}\label{bApprLem}
Let $b\ge1$, $s'\ge s>0$, and $x,x'\in\L'_1$. Then, $\{Y^{*,b,\t}(x,x',s,s')\}_{b\ge1,\t<1}$ are exponentially good approximations of $\{Y^{*,\t}(\L_\t(x,x')\times(s,s'])\}_{\t<1}$.
\end{lemma}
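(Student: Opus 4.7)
The plan is to construct an explicit coupling of $Y^{*,\t}(\L_\t(x,x')\times(s,s'])$ and $Y^{*,b,\t}(x,x',s,s')$ on a common probability space, and then bound the tail of the coupled difference by a direct Chernoff argument that mirrors the computation in Lemma~\ref{bTruncLem}. Conditional on the marked transmitter configuration $X$, the two random variables are Poisson with respective parameters
\[
\mu_1(X)=\lambda_{\ms{R}}\!\int_{\L_\t(x,x')}\!\bigl(\Gamma(s',y)-\Gamma(s,y)\bigr)\d y,\quad \mu_2(X)=\lambda_{\ms{R}}\!\int_{\L_\t(x,x')}\!\bigl(\Gamma^b(s',y)-\Gamma^b(s,y)\bigr)\d y,
\]
and hence may be realized jointly as $N_c+N_1'$ and $N_c+N_2'$ for Poisson variables $N_c,N_1',N_2'$ that are independent given $X$ with parameters $\mu_1\wedge\mu_2$, $(\mu_1-\mu_2)^+$, $(\mu_2-\mu_1)^+$. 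Under this coupling, $D:=|Y^{*,\t}-Y^{*,b,\t}|\le N_1'+N_2'$, a conditionally Poisson variable of parameter $|\mu_1(X)-\mu_2(X)|$.

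Next I would control the intensity gap via the Lipschitz property of $q$. Since $I^b(y)\le I(y)$ and both exceed the thermal noise $w$,
\[
|\Gamma(a,y)-\Gamma^b(a,y)|\le Na\,\E\bigl(I^b(y)^{-1}-I(y)^{-1}\bigm|X\bigr)\le \tfrac{Na}{w^2}\E\bigl(I(y)-I^b(y)\bigm|X\bigr).
\]
Boundedness of the powers and fading yields $\E(I(y)-I^b(y)\mid X)\le C_0\sum_{X_j}\ell(|X_j-y|)\mathbf{1}\{|X_j-y|\ge b\}$, and therefore
\[
|\mu_1(X)-\mu_2(X)|\le C\!\int_{\L_\t(x,x')}\sum_j\ell(|X_j-y|)\mathbf{1}\{|X_j-y|\ge b\}\d y
\]
for a constant $C=C(s',\lambda_{\ms{R}},N,w,s_{\ms{min}})$.

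Applying the exponential Markov inequality at parameter $u>0$ to $D$, combined with $\E(e^{uD}\mid X)=\exp((e^u-1)|\mu_1-\mu_2|(X))$ and the Laplace functional of the homogeneous Poisson process $X$, one obtains
\[
\P(D\ge\varepsilon|\L_\t'|)\le\exp\!\Bigl(-u\varepsilon|\L_\t'|+\lambda_{\ms{T}}\!\int_{\R^d}\!\bigl(e^{(e^u-1)C\int_{\L_\t(x,x')}\ell(|z-y|)\mathbf{1}\{|z-y|\ge b\}\d y}-1\bigr)\d z\Bigr).
\]
The outer $z$-integral is then split, as in Lemma~\ref{bTruncLem}, into a bulk region $\L_{R\t^{-\beta}}\supset\L_\t(x,x')$ and its complement. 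On the bulk the inner integral is at most $g(b):=\int_{\R^d\setminus B_b(o)}\ell(|w|)\d w$, giving a contribution $\le C'R^d(e^{(e^u-1)Cg(b)}-1)$ per unit of $|\L_\t'|$ which vanishes as $b\to\infty$; on the tail, the bound $|z-y|\ge(1+\sqrt d)^{-1}|z|$ combined with $\ell\in L^1$ and $\ell(r)\in o(r^{-d})$ reduces the contribution to $C''\!\int_{|w|\ge cR\t^{-\beta}}\!\ell(|w|)\d w$, which vanishes as $\t\to0$. Consequently $\lim_{b\to\infty}\limsup_{\t\to0}|\L_\t'|^{-1}\log\P(D\ge\varepsilon|\L_\t'|)\le-u\varepsilon$, and since $u>0$ is arbitrary, the limit is $-\infty$.

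The main obstacle is essentially administrative rather than conceptual: one must verify that the volume scaling $|\L_\t'|\asymp\t^{-\beta d}$ correctly substitutes for the factor $n^d$ in Lemma~\ref{bTruncLem}, so that the order of limits ($b\to\infty$ first and then $\t\to0$) is preserved and the constants remain uniform over the finite parameter sets appearing in Proposition~\ref{finDimLem}. No new probabilistic input beyond the Lipschitz continuity of $q$, the $L^1$-integrability together with polynomial tail of $\ell$, and the explicit Poisson coupling is required.
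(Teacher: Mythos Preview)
Your proposal is correct and follows essentially the same approach as the paper's proof. The paper also reduces to a conditionally Poisson bound on the difference, uses the Lipschitz property of $q$ together with $I^b\le I$ and $I,I^b\ge w$ to control the intensity gap by $C\int_{\L_\t(x,x')}\sum_j(\ell-\ell_b)(|X_j-y|)\,\d y$, and then defers to the Laplace-functional computation of Lemma~\ref{bTruncLem}. The only cosmetic difference is that the paper bounds the difference by a Poisson with parameter $H_b=\lambda_{\ms R}\int\bigl[(\Gamma^b(s',y)-\Gamma(s',y))+(\Gamma^b(s,y)-\Gamma(s,y))\bigr]\d y$ directly (using $\Gamma^b\ge\Gamma$), whereas your explicit maximal coupling gives the slightly sharper parameter $|\mu_1-\mu_2|$; after the triangle inequality both lead to the same estimate.
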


\begin{proof}
Conditioned on $X$, the random variable $|Y^{*,b,\t}(x,x',s,s')-Y^{*,\t}(x,x',s,s')|$ is stochastically dominated by a Poisson distributed random variable with parameter
$$H_b=\lambda_{\ms{R}}\int_{\L_\t(x,x')}\Gamma^b(s',y)-\Gamma(s',y)+\Gamma^b(s,y)-\Gamma(s,y)\d y.$$
Hence, using the Laplace transform of Poisson random variables, for any $a\ge1$ the exponential moment of $a|Y^{*,b,\t}(x,x',s,s')-Y^{*,\t}(x,x',s,s')|$ are bounded from above by 
\begin{align*}
\E\exp(a|Y^{*,b,\t}(x,x',s,s')-Y^{*,\t}(x,x',s,s')|)&\le\E\exp((e^a-1)H_b).
\end{align*}
Now, similar to the proof of Lemma~\ref{bTruncLem}, $H_b$ can be bounded from above by 
$$\lambda_{\ms{R}}N(s+s')s_{\ms{min}}^{-2}w^{-2}\int_{\L_\t(x)}\sum_{i\ge1}\ell(|X_i-y|)-\ell_b(|X_i-y|)\d y,$$ 
so that
\begin{align*}
&\E\exp(a|Y^{*,b,\t}(x,x',s,s')-Y^{*,\t}(\L_\t(x,x')\times(s,s'])|)\\
&\quad\le \E\exp\Big((e^a-1)\lambda_{\ms{R}}N(s+s')s_{\ms{min}}^{-2}w^{-2}\int_{\L_\t(x,x')}\sum_{i\ge1}\ell(|X_i-y|)-\ell_b(|X_i-y|)\d y\Big).
\end{align*}
Now, we conclude as in Lemma~\ref{bTruncLem}.
\end{proof}

\subsubsection{Periodization of the integration domain. } 
Next, we show that replacing the quantity $Y^{*,b,\t}(x,x',s,s')$ by a periodized variant is exponentially equivalent. To be more precise, let $b\ge1$, $s'\ge s\ge0$
and $x,x'\in\L'_1$ be such that all coordinates of $x'-x$ are positive. Then, $X^{\ms{per},\t}$ denotes the periodization of $X\cap\L_\t(x,x')$, i.e., 
$$X^{\ms{per},\t}=\bigcup_{z\in\Z^d}(|\L_\t(x,x')|^{1/d}z+X\cap\L_\t(x,x')).$$
As in Lemma~\ref{bApprLem}, we let $Y^{*,\ms{per},b,\t}(x,x',s,s')$ denote a random variable that conditioned on $X$ is Poisson distributed with parameter 
$$\lambda_{\ms{R}}\int_{\L_\t(x,x')}\Gamma^{\ms{per},b}(s',y)-\Gamma^{\ms{per},b}(s,y)\d y.$$
Here, $\Gamma^{\ms{per},b}(s,y)=\E(q(s\I^{\ms{per},b}(y)^{-1})|X^{\ms{per},\t})$ and $\I^{\ms{per},b}(y)$ is the interference at $y$ in the periodized configuration computed using truncated path-loss functions.
\begin{lemma}
\label{bApprLem2}
The random variables $\big\{Y^{*,\ms{per},b,\t}(x,x',s,s')\big\}_{\t<1}$ are exponentially equivalent to the random variables $\big\{Y^{*,b,\t}(x,x',s,s')\big\}_{\t<1}$.
\end{lemma}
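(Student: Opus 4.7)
The plan is to mirror the argument of Lemma~\ref{bApprLem}: we will couple the two random variables on a common probability space so that their difference is dominated by a single Poisson variable whose mean is controlled by a boundary effect, and then conclude via the exponential Chebyshev inequality.

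The crucial observation is that once the path-loss function is truncated at radius $b$, the interference becomes local. For any $y\in\L_\t(x,x')$ whose distance to the boundary $\partial\L_\t(x,x')$ exceeds $b$, the ball $B_b(y)$ is contained in $\L_\t(x,x')$, and by construction $X^{\ms{per},\t}$ agrees with $X$ on $\L_\t(x,x')$. Since $\ell_b$ vanishes outside $[0,b)$, this forces $\I^b(y)=\I^{\ms{per},b}(y)$ pathwise, and hence $\Gamma^b(u,y)=\Gamma^{\ms{per},b}(u,y)$ for $u\in\{s,s'\}$. Only the tubular neighbourhood $\partial_b\L_\t(x,x'):=\{y\in\L_\t(x,x'):\dist(y,\partial\L_\t(x,x'))\le b\}$ can contribute to the mismatch of the two conditional intensities, and a standard surface estimate gives $|\partial_b\L_\t(x,x')|\le Cb\,\t^{-(d-1)\b}$ with $C$ depending only on $|x-x'|$.

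Because $q$ takes values in $[0,1]$, both $\Gamma^b$ and $\Gamma^{\ms{per},b}$ lie in $[0,1]$. Consequently, almost surely in $X$, the $L^1$-distance between the two conditional Poisson intensities is bounded by
$$D_\t:=\lambda_{\ms{R}}\int_{\L_\t(x,x')}\big|\Gamma^{\ms{per},b}(s',y)-\Gamma^b(s',y)\big|+\big|\Gamma^{\ms{per},b}(s,y)-\Gamma^b(s,y)\big|\,\d y\le 4\lambda_{\ms{R}} Cb\,\t^{-(d-1)\b}.$$
Writing each Poisson variable as the sum of a shared Poisson with mean equal to the infimum of the two intensities plus an independent Poisson capturing the excess, we obtain a coupling under which, conditionally on $X$, the modulus $|Y^{*,\ms{per},b,\t}-Y^{*,b,\t}|$ is stochastically dominated by a Poisson variable of parameter $D_\t$.

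Combining this coupling with the exponential Chebyshev inequality at any parameter $a\ge 1$ then yields, for every $\e>0$,
$$\P\bigl(|Y^{*,\ms{per},b,\t}-Y^{*,b,\t}|\ge\e|\L_\t'|\bigr)\le\exp\bigl(-a\e|\L_\t'|+(e^a-1)\,4\lambda_{\ms{R}} Cb\,\t^{-(d-1)\b}\bigr).$$
Since $|\L_\t'|$ is of order $\t^{-d\b}$ whereas the correction is only of order $\t^{-(d-1)\b}$, dividing by $|\L_\t'|$ and sending $\t\to 0$ followed by $a\to\infty$ will give super-exponential decay, which is precisely exponential equivalence in the sense of~\cite[Definition 4.2.10]{dz98}. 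The only slight subtleties are to ensure that the Poisson coupling can be implemented jointly measurably in $X$ and to do the dimensional bookkeeping $\t^{-(d-1)\b}=o(|\L_\t'|)$; both are routine, so no substantial obstacle is expected.
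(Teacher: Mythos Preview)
Your proposal is correct and follows essentially the same approach as the paper: both arguments use that truncation makes the interference $b$-local, so $\Gamma^b$ and $\Gamma^{\ms{per},b}$ agree outside a boundary layer of width $b$; the difference of the two conditionally Poisson variables is then stochastically dominated by a Poisson variable whose deterministic parameter is of order $\t^{-(d-1)\beta}=o(|\L_\t'|)$, and Poisson concentration (equivalently, your exponential Chebyshev step) finishes the argument. The paper states this more tersely, referring back to Lemma~\ref{PeriodError} for the concentration step, but the substance is identical.
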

\begin{proof}
Since we consider truncated interferences, we have that $\I^{\ms{per},b}(y)=\I^{b}(y)$ for all $y\in \L^-_\t(x,x')$, where $\L^-_\t(x,x')$ denotes the subset of all $y\in\L_\t(x,x')$ such that $B_{b}(y)\subset\L_\t(x,x')$. In particular, $|Y^{*,\ms{per},b,\t}(x,x',s,s')-Y^{*,b,\t}(x,x',s,s')|$ is stochastically dominated by a Poisson random variable with parameter $2\lambda_{\ms{R}}|\L_\t(x,x')\setminus\L^-_\t(x,x')|$. Now, we can conclude as in Lemma~\ref{PeriodError} by making use of the Poisson concentration property.
\end{proof}

\subsubsection{Application of LDP for stationary empirical fields. }
We have seen that truncating the interference and considering a periodization does not have an effect on $\{Y^{*,\t}(x,x',s,s')\}_{\t<1}$ in the LDP asymptotics. Now, we derive an LDP after these modifications have been implemented. We put $\mu^{b}_\Q(s)=\Q(\Gamma^{b}(s,o))$.
\begin{proposition}
\label{lowTauProbLem}
The random variables $\big\{|\L_\t'|^{-1}\#Y^{*,\ms{per},b,\t}(x,x',s,s')\}_{\tau<1}$ satisfy an LDP with rate $|\L_\t'|$ and good rate function 
\begin{align}\label{SupInfRep}
\mc{I}^{x,x',s,s'}_{b,N}(a)&=|\L(x,x')|\inf_{\Q\in\mc{P}_\theta}\big(h(\Q|\P)+h\big(\tfrac{a}{|\L(x,x')|}|\mu^{b}_\Q(s')-\mu^{b}_\Q(s)\big)\big)
\end{align}
\end{proposition}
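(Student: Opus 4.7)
The plan is to combine an LDP at speed $|\Lambda_\tau(x,x')|$ for the spatially averaged intensity of the Cox random measure with the classical Poisson LDP applied conditionally on the transmitter field. Write
\[
M_\tau := \lambda_{\ms{R}}\int_{\Lambda_\tau(x,x')}\bigl(\Gamma^{\ms{per},b}(s',y)-\Gamma^{\ms{per},b}(s,y)\bigr)\d y\qquad\text{and}\qquad \bar m_\tau := M_\tau/|\Lambda_\tau(x,x')|,
\]
so that $Y^{*,\ms{per},b,\tau}(x,x',s,s')$ is, conditionally on $X$, Poisson with mean $M_\tau$.

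First I derive an LDP for $\bar m_\tau$. After truncation at range $b$ and periodization, the functional $\varphi\mapsto\Gamma^{\ms{per},b}(s,o)(\varphi)$ is bounded in $[0,1]$ and depends on $\varphi$ only through points within distance $b+s_{\ms{env}}(b)$ of the origin; its spatial average is therefore a bounded continuous functional of the periodized stationary empirical field. Applying Theorem~3.1 of~\cite{georgii2} to this empirical field (as in the proof of Proposition~\ref{ldpBProp}) together with the contraction principle via $\Q\mapsto\mu^b_\Q(s')-\mu^b_\Q(s)$ yields an LDP for $\bar m_\tau$ at speed $|\Lambda_\tau(x,x')|$ with rate function
\[
\tilde I_b(m)=\inf\bigl\{h(\Q|\P):\Q\in\mc{P}_\theta,\ \lambda_{\ms{R}}(\mu^b_\Q(s')-\mu^b_\Q(s))=m\bigr\}.
\]
Since $|\Lambda_\tau(x,x')|=|\Lambda(x,x')|\,|\Lambda_\tau'|$, this is also an LDP at speed $|\Lambda_\tau'|$ with rate $|\Lambda(x,x')|\tilde I_b(m)$.

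Next, the elementary Cram\'er theorem for Poisson random variables tells us that whenever $\bar m_\tau\to m>0$, the rescaled Poisson count $Y^{*,\ms{per},b,\tau}/|\Lambda_\tau'|$ conditioned on $X$ satisfies an LDP at speed $|\Lambda_\tau'|$ with rate $|\Lambda(x,x')|\,h\bigl(\tfrac{a}{|\Lambda(x,x')|}\bigm|m\bigr)$, uniformly for $m$ in compact subsets of $(0,\infty)$. A standard Laplace-type combination of the two LDPs (equivalently, Varadhan's lemma applied to the bounded continuous penalty $m\mapsto h(a/|\Lambda(x,x')|\,|\,m)$) then delivers the rate function
\[
\inf_{m\ge 0}\Bigl\{|\Lambda(x,x')|\tilde I_b(m)+|\Lambda(x,x')|\,h\bigl(\tfrac{a}{|\Lambda(x,x')|}\bigm|m\bigr)\Bigr\}=|\Lambda(x,x')|\inf_{\Q\in\mc{P}_\theta}\Bigl\{h(\Q|\P)+h\bigl(\tfrac{a}{|\Lambda(x,x')|}\bigm|\lambda_{\ms{R}}(\mu^b_\Q(s')-\mu^b_\Q(s))\bigr)\Bigr\},
\]
which matches the claimed rate function up to the placement of the factor $\lambda_{\ms{R}}$ in the notation for $\mu^b_\Q$.

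The main obstacle is the combination step. The upper bound is straightforward via a finite covering of the range of $\bar m_\tau$ and the union bound. The matching lower bound requires sufficient uniformity of the conditional Poisson LDP near the infimizing intensity; this can be handled by selecting, for each candidate $\Q\in\mc{P}_\theta$, a level set on which $\bar m_\tau$ is close to $\lambda_{\ms{R}}(\mu^b_\Q(s')-\mu^b_\Q(s))$ using the LDP lower bound for $\bar m_\tau$, and then invoking continuity of $m\mapsto h(a/|\Lambda(x,x')|\,|\,m)$ together with the standard Poisson lower bound on the corresponding conditional event. Compactness of the level sets of $h(\cdot|\P)$ and continuity of $\Q\mapsto\mu^b_\Q(s')-\mu^b_\Q(s)$ (which follows from boundedness and locality of the functional) ensure that infimizing sequences can be extracted.
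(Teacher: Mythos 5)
Your two-step decomposition---first an LDP for the averaged intensity $\bar m_\tau = M_\tau/|\Lambda_\tau(x,x')|$ of the Cox measure via contraction from the level-3 empirical-field LDP, then a combination with the conditional Poisson LDP---is a genuinely different organization from the paper's. The paper does not separate out an LDP for $\bar m_\tau$; instead, it bounds $\P\big(|\Lambda_\tau'|^{-1}Y^{*,\ms{per},b,\tau}\in F\big)$ directly by $\E\exp\big(-|\Lambda_\tau(x,x')|\inf_{a\in F}h(a|M_{\ms{av},\tau})\big)$ using the Poisson concentration inequalities from~\cite[Lemmas 1.2, 1.3]{penrose}, and then applies the level-3 theorem of~\cite{georgii2} to these Laplace functionals (this is the content of Lemma~\ref{coxIntLDP}, which requires checking semicontinuity of $\Q\mapsto\inf_{a\in F}h(a|\mu^b_\Q(s')-\mu^b_\Q(s))$ rather than just continuity of $\Q\mapsto\mu^b_\Q(s')-\mu^b_\Q(s)$). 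Your route has the virtue of isolating the intensity LDP as a cleanly stated intermediate result, at the cost of having to make the "combination" step rigorous.

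The one genuine error in your write-up is the parenthetical claim that the penalty $m\mapsto h\big(\tfrac{a}{|\Lambda(x,x')|}\,\big|\,m\big)$ is "bounded continuous": for $a>0$ it diverges like $\tfrac{a}{|\Lambda(x,x')|}\log(1/m)$ as $m\downarrow 0$, so Varadhan's lemma in its basic form does not apply, and it is precisely this singularity that drives the delicate part of the argument. Your proposed resolution of the lower bound (pick a near-optimal $\Q$, localize $\bar m_\tau$ near $m^*=\mu^b_\Q(s')-\mu^b_\Q(s)$, invoke continuity and the conditional Poisson bound) is in the right spirit, but only addresses the situation where $m^*>0$; you do not explain how to handle the interplay between the sets $G=[0,\gamma)$ and $G=(\gamma_-,\gamma_+)$, which is exactly why the paper's proof of Proposition~\ref{lowTauProbLem} splits into those two cases and separately controls the contribution from $\{M_{\ms{av},\tau}\le\varepsilon\}$ (where, for $G$ bounded away from $0$, the entropy $M_{\ms{av},\tau}h(\gamma_-|\Lambda(x,x')|^{-1}M_{\ms{av},\tau}^{-1})$ is made arbitrarily large). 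In addition, the conversion from the continuous target value $a$ to the nearest achievable lattice point $k/|\Lambda_\tau'|$ requires the Stirling-type control built into $f(k)$ in~\cite[Lemma 1.3]{penrose}; appealing only to "continuity" of the relative entropy obscures where uniformity in $\tau$ actually comes from. So the architecture is viable, but the boundedness claim must be dropped and the $m\to 0$ regime must be handled by a case analysis of the type the paper carries out.
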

Let us recall from \cite[Equations 1.2.12 and 1.2.13]{dz98} that if the random variable considered in an LDP is measurable with respect to the Borel $\s$-algebra on the underlaying topological space, then the proof of the upper and lower bound can be done directly for closed and open sets, respectively. We use this in the sequel without further mentioning.

\medskip
We prepare the proof of Proposition~\ref{lowTauProbLem} by a lemma. First, we note that~\cite[Theorem 3.1]{georgii2} gives the following auxiliary result, where we put 
$$M_{\ms{av},\t}=M_{\ms{av},\t}(x,x',s,s')=\lambda_{\ms{R}}|\L_\t(x,x')|^{-1}\int_{\L_\t(x,x')}\Gamma^{\ms{per},b}(s',y)-\Gamma^{\ms{per},b}(s,y)\d y.$$

\begin{lemma}
\label{coxIntLDP}
Let $F$ and $G$ be compact and open subsets of $[0,\infty)$, respectively. Then,
\begin{align*}
&\limsup_{\t\to0}\frac{1}{|\L_{\t}(x,x')|}\log\E\exp\big(-|\Lambda_{\t}(x,x')|\inf_{a\in F}h(a|M_{\ms{av},\tau})\big)\\
&\quad\le-\inf_{\substack{\Q\in\mc{P}_\theta\\ a\in F}}h(\Q|\P)+h\big(a|\mu^{b}_\Q(s')-\mu^{b}_\Q(s)\big),
\end{align*}
and 
\begin{align*}
&\liminf_{\t\to0}\frac{1}{|\L_{\t}(x,x')|}\log\E\exp\big(-|\Lambda_{\t}(x,x')|\inf_{a\in G}h(a|M_{\ms{av},\tau})\big)\\
&\quad\ge-\inf_{\substack{\Q\in\mc{P}_\theta\\ a\in G}}h\big(\Q|\P)+h(a|\mu^{b}_\Q(s')-\mu^{b}_\Q(s)\big).
\end{align*}
\end{lemma}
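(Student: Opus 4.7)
The plan is to derive the claimed bounds from Varadhan's lemma applied to the level-3 LDP for the periodized stationary empirical field. The key structural observation is that, because the interference $\I^{\ms{per},b}$ uses the truncated path-loss $\ell_b$, the map $y\mapsto\Gamma^{\ms{per},b}(s,y)$ factors through a bounded, translation-covariant, local functional $g_s$ of the marked configuration $X^{\ms{per},\t}$. Consequently,
$$M_{\ms{av},\t}=\lambda_{\ms{R}}\bigl(R_{\t}(g_{s'})-R_{\t}(g_s)\bigr),$$
where $R_\t$ denotes the stationary empirical field of $X^{\ms{per},\t}$ on $\L_\t(x,x')$; thus $M_{\ms{av},\t}$ is a bounded, continuous functional of $R_\t$ in the topology of local convergence used in~\cite{georgii2}.

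By~\cite[Theorem 3.1]{georgii2}, $R_\t$ satisfies an LDP with rate $|\L_\t(x,x')|$ and good rate function $h(\cdot|\P)$. Combined with the contraction principle, this yields an LDP for $M_{\ms{av},\t}$ with good rate function $\tilde{\mc{I}}(c) = \inf\{h(\Q|\P):\mu^b_\Q(s')-\mu^b_\Q(s)=c\}$. Since $g_s$ is uniformly bounded, $M_{\ms{av},\t}$ takes values in a fixed compact interval, so Varadhan's lemma applies with no exponential-moment subtleties.

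For the upper bound, the function $c\mapsto\inf_{a\in F}h(a|c)$ is lower semicontinuous by compactness of $F$ together with continuity of $h(a|\cdot)$ at positive arguments, so Varadhan's upper bound applied to the bounded upper semicontinuous functional $-\inf_{a\in F}h(a|\cdot)$ produces exactly the claimed right-hand side. For the lower bound, I would fix an $\eps$-near-optimal pair $(\Q_0,a_0)$ with $a_0\in G$, pick a small neighborhood $U$ of $c_0=\mu^b_{\Q_0}(s')-\mu^b_{\Q_0}(s)$ on which $h(a_0|c)\le h(a_0|c_0)+\eps$, and use the LDP lower bound to estimate $\P(M_{\ms{av},\t}\in U)\ge\exp(-|\L_\t(x,x')|(h(\Q_0|\P)+\eps))$. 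On $\{M_{\ms{av},\t}\in U\}$ the exponential factor is at least $\exp(-|\L_\t(x,x')|(h(a_0|c_0)+\eps))$, which yields the desired bound upon sending $\eps\to 0$.

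The main obstacle will be verifying cleanly that $g_s$ is sufficiently local and continuous in the sense required by~\cite{georgii2}, and that the Palm/marked structure plays well with the fading expectation defining $\Gamma^{\ms{per},b}$. Once this is in place, the remainder reduces to a routine combination of the contraction principle and Varadhan's lemma.
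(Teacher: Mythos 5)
Your proposal matches the paper's argument: both rest on the level-3 LDP from~\cite[Theorem 3.1]{georgii2} applied to the periodized empirical field, the observation that truncation makes $\Q\mapsto\mu^b_\Q(s')-\mu^b_\Q(s)$ a local (hence $\tau_{\mc{L}}$-continuous) functional, and the lower/upper semicontinuity of $c\mapsto\inf_{a\in F}h(a|c)$ and $c\mapsto\inf_{a\in G}h(a|c)$ on compact and open sets respectively. The only cosmetic difference is that you insert an explicit contraction step to first extract an LDP for $M_{\ms{av},\t}$ before invoking Varadhan, whereas the paper applies the Laplace--Varadhan bounds of Georgii--Zessin directly to the composite functional of the empirical field; the substance is identical.
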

\begin{proof}
In order to apply~\cite[Theorem 3.1]{georgii2}, we only need to check that the functions
$$\Q\mapsto\inf_{a\in F}h\big(a|\mu^{b}_\Q(s')-\mu^{b}_\Q(s)\big)$$
and
$$\Q\mapsto\inf_{a\in G}h\big(a|\mu^{b}_\Q(s')-\mu^{b}_\Q(s)\big)$$
are lower- and upper-semicontinuous, respectively. First, note that the map $\Q\mapsto \mu^{b}_\Q(s')-\mu^{b}_\Q(s)$  is continuous in the $\tau_{\mc{L}}$-topology, since $\Gamma^{b}(\cdot,o)$ only depends on $X$ via $X\cap B_b(o)$. Now, we conclude by observing that $a'\mapsto \inf_{a\in F}h(a|a')$ is lower-semicontinuous as pointwise infimum of a two-parameter lower-semicontinuous function over a compact set and $a'\mapsto\inf_{a\in G}h(a|a')$ is upper-semicontinuous as infimum over a family of continuous functions.
\end{proof}

Now, we can proceed with the proof of Proposition~\ref{lowTauProbLem}.
\begin{proof}[Proof of Proposition~\ref{lowTauProbLem}]
The upper bound for compact $F$ is an immediate consequence of Lemma~\ref{coxIntLDP}, since~\cite[Lemma 1.2]{penrose} implies that
\begin{align*}
\P(|\L_\t'|^{-1}Y^{*,\ms{per},b,\t}(x,x',s,s')\in F)&\le\E\exp\Big(-|\L_\t(x,x')|\inf_{a\in F}h\big(\tfrac{a}{|\L(x,x')|}|M_{\ms{av},\tau}\big)\Big).
\end{align*}
The proof of the lower bound is more involved. First, we may assume that $G$ is an interval, i.e., $G=[0,\gamma)$ or $G=(\gamma_-,\gamma_+)$ for some $\gamma,\gamma_-,\gamma_+>0$. Next, introduce the function $f(k)$ by $f(0)=1$ and $f(k)=e^{-1/(12k)}(\sqrt{2\pi k})^{-1}$ for $k\ge1$, and put $G^\t=\Z\cap(|\L_\t'|G)$. Then, by~\cite[Lemma 1.3]{penrose}, 
\begin{align*}
&\P(|\L_\t'|^{-1}Y^{*,\ms{per},b,\t}(x,x',s,s')\in G)\\
&\quad\ge\E\exp\Big(-\inf_{k\in G^\t}-\log f(k)+h\big(k\big||\L_\t(x,x')|M_{\ms{av},\t}\big)\Big)\\
&\quad\ge\E\exp\Big(-|\L_\t'|(|\L_\t'|^{-1/2}+|\L(x,x')|\inf_{k\in G^\t}M_{\ms{av},\t} h\big(k|\L_\t(x,x')|^{-1}{M}_{\ms{av},\t}^{-1})\big)\Big),
\end{align*}
where $h(k|\L_{\t}(x,x')|^{-1}{M}_{\ms{av},\t}^{-1})=h(k|\L_\t(x,x')|^{-1}M_{\ms{av},\t}^{-1}\big|1)$ is a short notation.
Now, we distinguish between the cases where $G$ contains $0$ and where it does not. We claim that if $G=[0,\gamma)$ and $\varepsilon>0$, then
$$\inf_{k\in G^\t}{M}_{\ms{av},\t} h(k|\L_\t(x,x')|^{-1}{M}_{\ms{av},\t}^{-1})\le\varepsilon+\inf_{g\in G}M_{\ms{av},\t}h(\tfrac{g}{|\L(x,x')|}{M}_{\ms{av},\t}^{-1}),$$
 provided that $\t>0$ is sufficiently small.
Once this claim is proven, Lemma~\ref{coxIntLDP} completes the proof of the lower bound for the case $G=[0,\gamma)$. Let $\varepsilon>0$ be arbitrary. Then, under the event $M_{\ms{av},\t}\le\varepsilon$, we deduce that
$$\inf_{k\in G^\t}M_{\ms{av},\t}h(k|\L_\t(x,x')|^{-1}M^{-1}_{\ms{av},\t})\le M_{\ms{av},\t}\le \varepsilon.$$
On the other hand, if $M_{\ms{av},\t}\ge\varepsilon$, then for every $g\in G$,
$$\big|g|\L(x,x')|^{-1}M_{\ms{av},\t}^{-1}-k(g)|\L_\t(x,x')|^{-1}M_{\ms{av},\t}^{-1}\big|\le|\L_\t(x,x')|^{-1}\varepsilon^{-1},$$
where $k(g)\ge1$ is chosen as the element of $\Z\cap(|\L_{\t}'|G)$ such that $k(g)|\L_{\t}(x,x')|^{-1}$ minimizes the distance to $g|\L(x,x')|^{-1}$. In particular, uniform continuity of $h(\cdot)$ on the interval $[0,\gamma|\L(x,x')|^{-1}\varepsilon^{-1}]$ implies that 
$$\inf_{k\in G^\t}M_{\ms{av},\t}h(k|\L_\t(x,x')|^{-1}M_{\ms{av},\t}^{-1})\le\varepsilon+\inf_{g\in G}M_{\ms{av},\t}h(g|\L(x,x')|^{-1}M_{\ms{av},\t}^{-1}),$$
for all sufficiently small $\t>0$. Finally, we deal with the case, where $G=(\gamma_-,\gamma_+)$ and observe that if $M_{\ms{av},\t}\ge\varepsilon$, then we can conclude as before. To be more precise, 
\begin{align*}
&\E\exp\Big(-|\L_\t(x,x')|\inf_{k\in G^\t}{M}_{\ms{av},\t} h(k|\L_\t(x,x')|^{-1}{M}_{\ms{av},\t}^{-1})\Big)\\
&\quad\ge\E\exp\Big(-|\L_\t(x,x')|(-\varepsilon+\inf_{g\in G}{M}_{\ms{av},\t}h\big(g|\L(x,x')|^{-1}M_{\ms{av},\t}^{-1})\big)\Big)\\
&\quad\phantom{\ge}-\E\one\{M_{\ms{av},\t}\le\varepsilon\}\exp\big(-|\L_\t(x,x')|(-\varepsilon+\inf_{g\in G}{M}_{\ms{av},\t}h\big(g|\L(x,x')|^{-1}M_{\ms{av},\t}^{-1})\big)\big).
\end{align*}
Now, for any $K\ge1$ there exists $\varepsilon>0$ such that  $M_{\ms{av},\t}h(\gamma_-|\L(x,x')|^{-1}M_{\ms{av},\t}^{-1})\ge K$ if $M_{\ms{av,\t}}\le\varepsilon$.
In particular, 
$$\exp\big(-|\L_\t(x,x')|(-\varepsilon+\inf_{g\in G}{M}_{\ms{av},\t}h(g{M}_{\ms{av},\t}^{-1}))\big)\le\exp\big(-|\L_\t(x,x')|(-\varepsilon+K)\big),$$
which completes the proof of the lower bound.

Since $Y^{*,\ms{per},b,\t}(x,x',s,s')$ is stochastically dominated by a Poisson random variable with parameter $\lambda_{\ms{R}}|\L_\t(x,x')|$, the random variables $\big\{|\L_\t'|^{-1}Y^{*,\ms{per},b,\t}(x,x',s,s')\}_{\t<1}$ are exponentially tight. This implies both goodness of the rate function and the full LDP.
\end{proof}

Next, using Lemma~\ref{lowTauProbLem}, we derive an LDP for the finite-dimensional marginals of $Y^{*,\ms{per},b,\t}(\cdot,\cdot)$. In order to state this precisely, it is convenient to introduce some notation. Let $-|\L_1'|^{1/d}/2=\xi_0<\xi_1<\cdots<\xi_k\le|\L_1'|^{1/d}/2$ and $0=s_0< s_1<\cdots<s_r$. Then, for $x=(\xi_{i_1},\ldots,\xi_{i_d})$ and $s=s_i$ we put $x_{+,\Xi}=(\xi_{i_1+1},\ldots,\xi_{i_d+1})$ and $s_{+,S}=s_{i+1}$, where we use the conventions $\xi_{k+1}=|\L_1'|^{1/d}/2$ and $s_{\ell+1}=\infty$. 
\begin{corollary}
\label{multTauProb2Lem}
Let $-|\L_1'|^{1/d}/2=\xi_0<\xi_1<\cdots<\xi_k\le|\L_1'|^{1/d}/2$ and $0=s_0< s_1<\cdots<s_\ell$. Furthermore, put $\Xi=\{\xi_0,\xi_1,\ldots,\xi_k\}$ and $S=\{s_0,s_1,\ldots,s_r\}$. Then, the random vectors
$$\big\{\big(|\L_\t'|^{-1}Y^{*,b,\t}(x,x_{+,\Xi},s,s_{+,S})\big)_{(x,s)\in\Xi^d\times S}\big\}_{\t<1}$$ 
satisfy an LDP with rate $|\L_\t'|$ and good rate function
$$\mc{I}^{b}_{\Xi,S}((a_z)_{z\in\Xi^d\times S})=\sum_{x\in\Xi^d}|\L(x,x_{+,\Xi})|\inf_{\Q\in\mc{P}_\theta}h(\Q|\P)+\sum_{i=1}^{r} h\big(\tfrac{a_{x,s_i}}{|\L(x,x_{+,\Xi})|}\big|\Delta\mu^{b}_{\Q}(s_{i})\big).$$
\end{corollary}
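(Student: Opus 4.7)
The plan is a two-stage reduction followed by a vector-valued extension of the argument in Proposition~\ref{lowTauProbLem}. First I would invoke Lemma~\ref{bApprLem2}, applied cell by cell, to pass up to exponential equivalence from the vector of $Y^{*,b,\t}$'s to the corresponding vector of periodized variables $Y^{*,\ms{per},b,\t}(x,x_{+,\Xi},s_{i-1},s_i)$. The crucial structural feature is then that, across different spatial cells $x\in\Xi^d$, these periodized variables depend on disjoint portions of the underlying marked Poisson process of transmitters and are therefore unconditionally independent in $x$. By the product LDP, it is then enough to prove the vector LDP within a single cell and sum the resulting rate functions, which produces the outer $\sum_{x\in\Xi^d}|\L(x,x_{+,\Xi})|$ factor in $\mc{I}^b_{\Xi,S}$.

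Within a single cell I would repeat the Laplace-transform argument of Lemma~\ref{coxIntLDP} and Proposition~\ref{lowTauProbLem}, but for an $r$-dimensional vector. Conditional on $X$, the increments $Y^{*,\ms{per},b,\t}(x,x_{+,\Xi},s_{i-1},s_i)$, $i=1,\ldots,r$, are independent Poisson random variables, since they are disjoint mark-strata of the same Cox point process, with means
$$m_{i,\t}=\lambda_{\ms{R}}\int_{\L_\t(x,x_{+,\Xi})}\big(\Gamma^{\ms{per},b}(s_i,y)-\Gamma^{\ms{per},b}(s_{i-1},y)\big)\,\d y.$$
Applying the Stirling sandwich estimates~\cite[Lemmas 1.2--1.3]{penrose} coordinatewise to this product Poisson law bounds the probability that the normalized vector lies in a compact (respectively open) set above and below by expectations of $\exp(-|\L_\t(x,x_{+,\Xi})|\,\mathcal{G}_\t)$, where $\mathcal{G}_\t$ is the natural infimum of $\sum_i h(a_i|m_{i,\t}/|\L_\t(x,x_{+,\Xi})|)$ over that set. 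Applying~\cite[Theorem 3.1]{georgii2} to the $\tau_{\mc{L}}$-continuous functional $\Q\mapsto\sum_i h(a_i|\mu^b_\Q(s_i)-\mu^b_\Q(s_{i-1}))$ of the stationary empirical field of $X$ then converts the $X$-expectation into the variational form $\inf_{\Q}[h(\Q|\P)+\sum_i h(a_i/|\L(x,x_{+,\Xi})|\,|\,\Delta\mu^b_\Q(s_i))]$, which is the claimed single-cell rate function.

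The step I expect to require the most care is the semicontinuity check needed to invoke~\cite[Theorem 3.1]{georgii2} in this vector-valued setting. Lower semicontinuity of $\Q\mapsto\inf_{(a_i)\in F}\sum_i h(a_i|\Delta\mu^b_\Q(s_i))$ on a compact $F$ should follow from joint lower semicontinuity of $(a,a')\mapsto h(a|a')$ together with $\tau_{\mc{L}}$-continuity of each $\Q\mapsto\Delta\mu^b_\Q(s_i)$, which is itself a consequence of the locality property that $\Gamma^b(s,o)$ depends on $X$ only through $X\cap B_b(o)$, as already exploited in Lemma~\ref{coxIntLDP}. Upper semicontinuity on an open set is analogous, obtained by writing the infimum as an infimum of continuous functions. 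Exponential tightness of the vector is immediate from stochastic domination by a product of Poisson variables with parameter $\lambda_{\ms{R}}|\L_\t(x,x_{+,\Xi})|$, which simultaneously yields goodness of the rate function and the full LDP.
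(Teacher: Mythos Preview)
Your proposal is correct and follows essentially the same route as the paper: pass to the periodized variables via Lemma~\ref{bApprLem2}, use the spatial independence across cells $x\in\Xi^d$ (coming from the fact that the periodized variable in cell $x$ depends only on $X\cap\L_\t(x,x_{+,\Xi})$) together with the product LDP, and then observe that the single-cell vector LDP in the $S$-direction is obtained by re-running the argument of Proposition~\ref{lowTauProbLem} coordinatewise. The paper states the last step tersely as ``an inspection of its proof shows that it also extends to the case of general finite $S$''; your write-up simply unpacks that inspection, including the semicontinuity and exponential-tightness checks, so there is no substantive difference.
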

\begin{proof}
First, we observe that $\big(Y^{*,\ms{per},b,\t}(x,x_{+,\Xi})\big)_{x\in\Xi^d}$ defines a family of independent random vectors, where we put 
$$Y^{*,\ms{per},b,\t}(x,x_{+,\Xi})=\big(Y^{*,\ms{per},b,\t}(x,x_{+,\Xi},s,s_{+,S})\big)_{s\in S}.$$
Indeed, this is a consequence of the independence property of the Poisson point process $X$ since by the definition of the periodization, $Y^{*,\ms{per},b,\t}(x,x_{+,\Xi})$ depends on $X$ only via $X\cap\L_\t(x,x_{+,\Xi})$. Hence, if for any fixed $x\in\Xi^d$ we can establish an LDP for $Y^{*,\ms{per},b,\t}(x,x_{+,\Xi})$ with a certain good rate function, then~\cite[Exercise 4.2.7]{dz98} allows us to deduce that the collection $\big(Y^{*,\ms{per},b,\t}(x,x_{+,\Xi})\big)_{x\in\Xi^d}$ satisfies an LDP with good rate function given by the sum of the individual ones. If $|S|=1$, then the LDP for $Y^{*,\ms{per},b,\t}(x,x_{+,\Xi})$ is precisely the result of Lemma~\ref{lowTauProbLem}, and an inspection of its proof shows that it also extends to the case of general finite $S$.

In Lemma~\ref{bApprLem} we have seen that periodization replaces $\{Y^{*,b,\t}(x,x',s,s')\}_{\tau<1}$ by exponentially equivalent random variables. Hence, applying~\cite[Theorem 4.2.13]{dz98} completes the proof.
\end{proof}

In order to deduce Proposition~\ref{finDimLem} from Corollary~\ref{multTauProb2Lem}, we need to undo the truncation approximations. Before we start with the proof of Proposition~\ref{finDimLem}, it is convenient to derive certain continuity properties of $\mu_{\Q}^b$ and $\tfrac{\d}{\d s}\mu_\Q^b(s)$ with respect to $b$ and $\Q$. The technique of proof is similar to the one used in the proof of Theorem~\ref{ldpThm}.

\begin{lemma}
\label{qContLem}
Let $\varepsilon,K>0$ and $s\ge0$ be arbitrary. Then, there exists $b\ge1$ such that if $\Q\in\mc{P}_\theta$ satisfies $h(\Q|\P)\le K$, then
$$\mu^b_\Q(s)-\mu_\Q(s)\le\varepsilon,\qquad\text{and}\qquad |\tfrac{\d}{\d s}\mu_\Q^b(s)-\tfrac{\d}{\d s}\mu_\Q(s)|\le\varepsilon.$$
\end{lemma}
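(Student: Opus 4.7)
The plan is to reduce both estimates to a pointwise inequality of the form
\MLine{
0 \;\le\; \Gamma^{b}(s,o) - \Gamma(s,o) \;\le\; C(s)\sum_{j\ge1}\bigl(\ell(|X_j|)-\ell_b(|X_j|)\bigr),
}
and an analogous bound for the derivative in $s$, followed by integration against $\Q$ together with a uniform control on the intensity $\lambda_\Q$ of stationary processes with $h(\Q|\P)\le K$.

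For the pointwise estimate, since $\ell_b\le\ell$ one has $\I^b(y)\le\I(y)$, so $s\I(y)^{-1}\le s\I^b(y)^{-1}$, and monotonicity of the distribution function $q$ already yields $\Gamma^{b}\ge\Gamma$. Applying the $N$-Lipschitz hypothesis on $q$ to the difference, combined with $\I,\I^b\ge w$ and the uniform support bounds for the powers $P_x$ and fading variables $F_{x,y}$, gives the claimed bound with $C(s)$ linear in $s$ (and otherwise depending only on $N,w,s_{\ms{min}},s_{\ms{max}}$). For the derivative I would write $\tfrac{\d}{\d s}\Gamma(s,y)=\E(q'(s\I(y)^{-1})\I(y)^{-1}\mid X)$ and split $q'(s\I^{b,-1})\I^{b,-1}-q'(s\I^{-1})\I^{-1}$ into a piece $q'(s\I^{b,-1})(\I^{b,-1}-\I^{-1})$ — handled by $|q'|\le N$, which follows from $q$ being $N$-Lipschitz — and a piece $(q'(s\I^{b,-1})-q'(s\I^{-1}))\I^{-1}$, handled by the additional Lipschitz hypothesis on $q'$. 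Both terms produce bounds of the same form with a constant $C'(s)$.

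The main conceptual point is the uniform intensity bound: $h(\Q|\P)\le K$ implies $\lambda_\Q\le C(K)$ for a constant depending only on $K$ and $\lambda_{\ms T}$. Since $|\Lambda_1|=1$, by the Donsker--Varadhan variational formula applied on $\Lambda_1$ with test functional $\varphi\mapsto\theta\#(\varphi\cap\Lambda_1)$, for every $\theta>0$,
\MLine{
K \;\ge\; h(\Q|\P) \;\ge\; H(\Q_{\Lambda_1}|\P_{\Lambda_1}) \;\ge\; \theta\lambda_\Q - \lambda_{\ms T}(e^\theta-1),
}
and specializing to $\theta=1$ gives $\lambda_\Q\le K+\lambda_{\ms T}(e-1)$.

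To assemble the lemma, I use Campbell's formula and stationarity of $\Q$ to obtain
\MLine{
\Q\Bigl(\sum_{j\ge1}\bigl(\ell(|X_j|)-\ell_b(|X_j|)\bigr)\Bigr) \;=\; \lambda_\Q\int_{\{|x|\ge b\}}\ell(|x|)\,\d x,
}
which tends to $0$ as $b\to\infty$ by integrability of $\ell$, and this convergence is uniform over $\{\Q\in\mc{P}_\theta:h(\Q|\P)\le K\}$ thanks to the intensity bound. Integrating the two pointwise estimates at $y=o$ against $\Q$ and choosing $b$ large enough depending only on $\varepsilon$, $K$ and $s$ therefore yields both inequalities in the lemma. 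The entropy-to-intensity step is the one that requires a genuine idea; the rest is routine bookkeeping very much in the spirit of Lemma~\ref{bTruncLem}.
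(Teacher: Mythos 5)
Your argument is correct, but it takes a genuinely different and more elementary route than the paper. Both proofs start from the same pointwise Lipschitz estimate $\Gamma^b(s,o)-\Gamma(s,o)\le C(s)\sum_{j}(\ell-\ell_b)(|X_j|)$ (and its analogue for the $s$-derivative, via the same two-term split the paper uses in~\eqref{qEstimate} and below). Where you differ is in how you pass from the pointwise bound to the uniform bound over $\{\Q\in\mc{P}_\theta:h(\Q|\P)\le K\}$. You do this in one step: Campbell's formula turns the pointwise estimate into $\mu^b_\Q(s)-\mu_\Q(s)\le C(s)\lambda_{\ms R}\lambda_\Q\int_{|x|\ge b}\ell(|x|)\,\d x$, and the Donsker--Varadhan inequality on $\Lambda_1$ with the test functional $\theta\#(\cdot\cap\Lambda_1)$ gives the intensity bound $\lambda_\Q\le K+\lambda_{\ms T}(e-1)$ (legitimate here because $\Q\in\mc{P}_\theta$ has finite intensity and the Poisson exponential moment is finite). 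The paper instead takes a detour through another LDP: proceeding as in Lemma~\ref{coxIntLDP}, it establishes an LDP for the spatial averages $|\L_\t'|^{-1}\int_{\L_\t'}[\Gamma^b(s,y)-\Gamma^{b'}(s,y)]\,\d y$ with rate function $a\mapsto\inf_{\Q:\,\mu^b_\Q(s)-\mu^{b'}_\Q(s)=a}h(\Q|\P)$, then shows via the Poisson Laplace functional (as in Lemma~\ref{bTruncLem}) that the probability of this average exceeding $\varepsilon$ decays faster than $e^{-K|\L_\t'|}$; the LDP lower bound then forces the rate function on $(\varepsilon,\infty)$ to be $\ge K$, which yields the contrapositive, and one finishes by letting $b'\to\infty$. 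Your approach is shorter and makes the mechanism explicit — \emph{finite specific entropy forces bounded intensity, and bounded intensity kills the tail of $\ell$ uniformly} — whereas the paper's argument is longer but reuses machinery already in place and keeps the proof structurally parallel to the truncation step in the proof of Theorem~\ref{ldpThm}.
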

\begin{proof}
We first deal with the part of the statement not involving derivatives. Let $b'\ge b\ge1$ be arbitrary. Proceeding as in Lemma~\ref{coxIntLDP}, we see that $\big\{|\L_\t'|^{-1}\int_{\L_\t'}
\Gamma^b(s,y)-\Gamma^{b'}(s,y)\d y\big\}_{\t<1}$ satisfies an LDP with rate $|\L_\t'|$ and good rate function
$$a\mapsto\inf_{\substack{\Q\in\mc{P}_\theta\\ \mu^b_\Q(s)-\mu^{b'}_\Q(s)=a}}h(\Q|\P).$$
In particular, the proof of the lemma is completed, once we show the existence of $b_0\ge1$ such that
$$\limsup_{\t\to0}|\L_\t'|^{-1}\log\P\Big(\int_{\L_\t'}|\L_\t'|^{-1}\Gamma^b(s,y)-\Gamma^{b'}(s,y)\d y>\varepsilon\Big)\le -K.$$
for all $b'\ge b\ge b_0$.
Note that 
\begin{equation}\label{qEstimate}
\begin{split}
\Gamma^b(s,y)-\Gamma^{b'}(s,y)\le Nss_{\ms{min}}^{-2}w^{-2}\sum_{i\ge1}\ell_{b'}(|y-X_i|)-\ell_b(|y-X_i|).
\end{split}
\end{equation}
Hence, using the formula for the Laplace functional of a Poisson point process shows that for any $a>0$, 
\begin{align*}
&\E\exp\Big(a\int_{\L_\t'}\Gamma^b(s,y)-\Gamma^{b'}(s,y)\d y>\varepsilon\Big)\\
&\quad\le\exp\Big(\lambda_{\ms{T}}\int_{\R^d}\exp\big(aNs_{\ms{min}}^{-2}w^{-2}\int_{\L_\t'}\ell(|x-y|)-\ell_b(|x-y|)\d y\big)-1\d x\Big).
\end{align*}
 Now, we conclude as in Lemma~\ref{bTruncLem}.

For the part involving the derivatives, note that the derivative of $\mu_\Q(s)$ is given by \linebreak $\lambda_{\ms{R}}\Q(\I(o)^{-1}\tfrac{\d}{\d s} \Gamma(s,o))$. Essentially, this means replacing in the above arguments the expression $q(s\I(y)^{-1})$ by $\I(y)^{-1}\tfrac{\d}{\d s} q(s\I(y)^{-1})$. This specific form only comes into play in the estimate~\eqref{qEstimate} which can be replaced by 
\begin{equation*}
\begin{split}
&|\I^b(y)^{-1}\tfrac{\d}{\d s} q(s\I^b(y)^{-1})-\I^{b'}(y)^{-1}\tfrac{\d}{\d s} q(s\I^{b'}(y)^{-1})|\\
&\quad\le \I^b(y)^{-1}|\tfrac{\d}{\d s} q(s\I^b(y)^{-1})-\tfrac{\d}{\d s} q(s\I^{b'}(y)^{-1})|+\tfrac{\d}{\d s} q(s\I^{b'}(y)^{-1})|\I^b(y)^{-1}-\I^{b'}(y)^{-1}|\\
&\quad\le \big(Nss_{\ms{min}}^{-2}w^{-3}+Nw^{-2}s_{\ms{min}}^{-2}\big)\sum_{i\ge1}(\ell_{b'}(|y-X_i|)-\ell_{b}(|y-X_i|)),
\end{split}
\end{equation*}
as required.
\end{proof}

\begin{corollary}
\label{qContCor}
Let $K>0$ and $s\ge1$ be arbitrary. Then, in the $\tau_{\mc{L}}$-topology,
\begin{enumerate}
\item as $b\to\infty$, the functions $\Q\mapsto \mu^b_\Q(s)$ converge to $\mu_\Q(s)$ uniformly in $\{\Q:\,h(\Q|\P)\le K\}$. In particular, $\Q\mapsto\mu_\Q(s)$ is continuous on $\{\Q:\,h(\Q|\P)\le K\}$,
\item as $b\to\infty$, the functions $\Q\mapsto \tfrac{\d}{\d s}\mu^b_\Q(s)$ converge to $\tfrac{\d}{\d s}\mu_\Q(s)$ uniformly in $\{\Q:\,h(\Q|\P)\le K\}$. In particular, $\Q\mapsto\tfrac{\d}{\d s}\mu_\Q(s)$ is continuous on $\{\Q:\,h(\Q|\P)\le K\}$,
\item if $b_n\to \infty$, $\Q_n\to \Q$ and $\limsup_{n\to\infty}h(\Q_n|\P)\le K$, then $\mu^{b_n}_{\Q_n}(s)\to\mu_\Q(s)$.
\end{enumerate}
\end{corollary}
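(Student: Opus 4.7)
The plan is to derive all three statements from Lemma~\ref{qContLem} together with the observation that, for each fixed $b\ge1$, the maps $\Q\mapsto\mu_\Q^b(s)$ and $\Q\mapsto\tfrac{\d}{\d s}\mu_\Q^b(s)$ are $\tau_{\mc{L}}$-continuous. This continuity will follow from the fact that, in the setting of Theorem~\ref{lowTauProbThm} (iid fading, no environment), $\Gamma^b(s,o)$ and its $s$-derivative depend on the marked transmitter configuration only through its restriction to $B_b(o)$ and are bounded, so they qualify as bounded local functions in the sense in which the $\tau_{\mc{L}}$-topology is defined.

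For parts (i) and (ii), I would first upgrade Lemma~\ref{qContLem} from \emph{there exists $b$} to \emph{for all $b$ sufficiently large}: an inspection of its proof shows that the truncation estimate, coming from the Poisson Laplace-functional bound on $\int_{\Lambda_\t'}(\Gamma-\Gamma^{b})\,\d y$ and its derivative analogue, is monotone non-increasing in $b$, so one and the same threshold $b_0$ works for all $b\ge b_0$ and uniformly over the level set $\{\Q:h(\Q|\P)\le K\}$. This yields the asserted uniform convergence, and continuity of the limit maps on the level set then follows automatically because a uniform limit of continuous functions is continuous.

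For part (iii), I would use a triangle-inequality argument. The $\limsup$ hypothesis forces $h(\Q_n|\P)\le K+1$ for all sufficiently large $n$, so part (i) applied with $K+1$ in place of $K$ gives $|\mu_{\Q_n}^{b_n}(s)-\mu_{\Q_n}(s)|\to 0$. Combined with the $\tau_{\mc{L}}$-continuity of $\Q\mapsto\mu_\Q(s)$ on $\{\Q:h(\Q|\P)\le K+1\}$ (also from part (i)), which yields $\mu_{\Q_n}(s)\to\mu_\Q(s)$, the triangle inequality closes the argument.

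The only mildly delicate point, and the one on which the whole corollary hinges, is the monotonicity-in-$b$ upgrade of Lemma~\ref{qContLem}: without it one cannot apply the lemma to the sequence $b_n\to\infty$ appearing in part (iii), nor pass uniform convergence through to continuity of the limiting maps in parts (i)--(ii). Once that observation is in place, the rest is a standard diagonal argument.
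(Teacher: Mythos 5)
Your proof is correct and follows essentially the same path as the paper's. The paper also derives (i) and (ii) directly from Lemma~\ref{qContLem} together with the $\tau_{\mc{L}}$-continuity of $\Q\mapsto\mu_\Q^b(s)$ (which holds because $\Gamma^b(s,o)$ is a bounded local functional of $X\cap B_b(o)$, as noted in the proof of Lemma~\ref{coxIntLDP}), and for (iii) uses the three-term decomposition $|\mu_\Q(s)-\mu^{b_n}_{\Q_n}(s)|\le|\mu_\Q(s)-\mu^b_\Q(s)|+|\mu^b_\Q(s)-\mu^b_{\Q_n}(s)|+|\mu^b_{\Q_n}(s)-\mu^{b_n}_{\Q_n}(s)|$, which is a minor repackaging of your two-term argument; both hinge on exactly the ``for all $b\ge b_0$'' strengthening of Lemma~\ref{qContLem} that you correctly flag (the proof of that lemma establishes the bound uniformly over $b'\ge b\ge b_0$, and for the non-derivative part $\mu^b_\Q(s)-\mu_\Q(s)\ge0$ is monotone in $b$).
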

\begin{proof}
Since the first two items are an immediate consequence of Lemma~\ref{qContLem}, we only deal with the last item.
Here, we use the decomposition
$$|\mu_{\Q}(s)-\mu^{b_n}_{\Q_n}(s)|\le |\mu_{\Q}(s)-\mu_{\Q}^b(s)|+|\mu_{\Q}^b(s)-\mu_{\Q_n}^b(s)|+|\mu^{b_n}_{\Q_n}(s)-\mu_{\Q_n}^b(s)|,$$
and conclude as before.
\end{proof}
Now, we have completed all preparations for the proof of Proposition~\ref{finDimLem}.

\begin{proof}[Proof of Proposition~\ref{finDimLem}]
In Lemma~\ref{bApprLem}, we have seen that truncation leads to an exponentially good approximation. Therefore, combining Corollary~\ref{multTauProb2Lem} with~\cite[Theorem 4.2.16]{dz98} shows that the random vector $\big\{\big(|\L_\t'|^{-1}Y^{*,\t}(x,s)\big)_{(x,s)\in\Xi^d\times S}\big\}_{\t<1}$ satisfies a weak LDP with rate function
$$\mc{I}_{\Xi,S}'((a_{x,s})_{(x,s)})=\sup_{m\ge1}\liminf_{b\to\infty}\inf_{(a'_{x,s})_{(x,s)}:\,|(a'_{x,s})_{(x,s)}-(a_{x,s})_{(x,s)}|_\infty\le m^{-1}}\mc{I}^{b}_{\Xi,S}((a'_{x,s})_{(x,s)}).$$
Since the random vectors $\big\{\big(|\L_\t'|^{-1}Y^{*,\t}(x,s)\big)_{(x,s)\in\Xi^d\times S}\big\}_{\tau<1}$ are exponentially tight, the proof is completed once we show that $\mc{I}_{\Xi,S}'((a_{x,s})_{(x,s)})=\mc{I}_{\Xi,S}((a_{x,s})_{(x,s)})$, where
$$\mc{I}_{\Xi,S}((a_{x,s})_{(x,s)})=\sum_{x\in \Xi^d}|\L(x,x_{+,\Xi})|\inf_{\Q\in\mc{P}_\theta}h(\Q|\P)+\sum_{i=1}^rh\Big(\tfrac{a_{x,s_{i}}}{|\L(x,x_{+,\Xi})|}\big|\Delta\mu_{\Q}(s_i)\Big).$$
First, we note that $q(s\I^b(o)^{-1})$ is decreasing in $b$ and converges to $q(s\I(o)^{-1})$ as $b\to\infty$. Hence, for any $\Q\in\mc{P}_\theta$ and $x\in\Xi^d$, 
$$\lim_{b\to\infty}\sum_{i=1}^r h\big(\tfrac{a_{x,s_i}}{|\L(x,x_{+,\Xi})|}\big|\Delta\mu^b_\Q(s_i)\big)=\sum_{i=1}^r h\big(\tfrac{a_{x,s_i}}{|\L(x,x_{+,\Xi})|}\big|\Delta\mu_\Q(s_i)\big).$$
In particular, $\mc{I}_{\Xi,S}'((a_{x,s})_{(x,s)})\le\mc{I}_{\Xi,S}((a_{x,s})_{(x,s)})$. For the other direction, fix $\delta>0$ and $x\in\Xi^d$. Then, for each $m\ge1$ choose a sequence $(b_{m,n})_{n\ge1}$ such that $\lim_{n\to\infty}b_{m,n}=\infty$ and write
\begin{align*}
&\lim_{n\to\infty}\hspace{-0.5cm}\inf_{\substack{\Q\in\mc{P}_\theta\\ (a'_{x,s})_{(x,s)}:\,|(a'_{x,s})_{s}-(a_{x,s})_{s}|_\infty\le m^{-1}}}\hspace{-0.5cm}h(\Q|\P)+\sum_{i=1}^rh\Big(\tfrac{a'_{x,s_{i}}}{|\L(x,x_{+,\Xi})|}\big|\Delta \mu^{b_{m,n}}_{\Q}(s_i)\Big)\\
&\quad=\liminf_{b\to\infty}\hspace{-0.5cm}\inf_{\substack{\Q\in\mc{P}_\theta\\ (a'_{x,s})_{(x,s)}:\,|(a'_{x,s})_{s}-(a_{x,s})_{s}|_\infty\le m^{-1}}}\hspace{-0.5cm}h(\Q|\P)+\sum_{i=1}^rh\Big(\tfrac{a'_{x,s_{i}}}{|\L(x,x_{+,\Xi})|^{}}\big|\Delta\mu^b_{\Q}(s_i)\Big).
\end{align*}
Next, for each $m,n\ge1$ choose $\Q_{x,m,n}\in\mc{P}_\theta$, and for each $m,n\ge1$ and $s\in S$ choose $a'_{x,s,m,n}\in [a_{x,s}-1/m,a_{x,s}+1/m]$ such that 
\begin{align*}
&\inf_{\substack{\Q^*\in\mc{P}_\theta\\ (a^*_{x,s})_{s}:\,|(a^*_{x,s})_{s}-(a_{x,s})_{s}|_\infty\le m^{-1}}}h(\Q^*|\P)+\sum_{i=1}^r h\Big(\tfrac{a^*_{x,s_i}}{|\L(x,x_{+,\Xi})|}\Big|\Delta\mu^{b_{m,n}}_{\Q^*}(s_i)\Big)\\
&\quad\ge-\delta+h(\Q_{x,m,n}|\P)+\sum_{i=1}^r h\Big(\tfrac{a'_{x,s_i,m,n}}{|\L(x,x_{+,\Xi})|}\big|\Delta\mu^{b_{m,n}}_{\Q_{x,m,n}}(s_i)\Big).
\end{align*}
If $\limsup_{n\to\infty} h(\Q_{x,m,n}|\P)=\infty$, then 
$$\lim_{n\to\infty}\hspace{-0.6cm}\inf_{\substack{\Q\in\mc{P}_\theta\\ (a'_{x,s})_{(x,s)}:\,|(a'_{x,s})_{s}-(a_{x,s})_{s}|_\infty\le m^{-1}}}\hspace{-0.6cm}h(\Q|\P)+\sum_{i=1}^rh\Big(\tfrac{a'_{x,s_{i}}}{|\L(x,x_{+,\Xi})|}\big|\Delta\mu^{b_{m,n}}_{\Q}(s_i)\Big)=\infty,$$
which is certainly at least as large as $\mc{I}_{\Xi,S}((a_{x,s})_{(x,s)})$. Otherwise, after passing to a subsequence, we may assume that $\lim_{n\to\infty}\Q_{x,m,n}=\Q_{x,m}$ for some $\Q_{x,m}\in\mc{P}_\theta$ by sequential compactness. Furthermore, we may also assume for each $1\le i\le r$, that $\lim_{n\to\infty}a'_{x,s_i,m,n}=a'_{x,s_i,m}$ for some $a'_{x,s_i,m}\in[a_{x,s}-1/m,a_{x,s}+1/m]$. In particular, lower semicontinuity of $h$ implies that $\liminf_{n\to\infty}h(\Q_{x,m,n}|\P)\ge h(\Q_{x,m}|\P)$. Moreover, by Corollary~\ref{qContCor},
$$\lim_{n\to\infty}\Delta\mu^{b_{m,n}}_{\Q_{x,m,n}}(s_i)=\Delta\mu_{\Q_{x,m}}(s_i).$$
Hence, another application of lower semicontinuity gives that
$$\liminf_{n\to\infty}h\Big(\tfrac{a'_{x,s_i,m,n}}{|\L(x,x_{+,\Xi})|}\Big|\Delta\mu^{b_{m,n}}_{\Q_{x,m,n}}(s_i)\Big)\ge h\Big(\tfrac{a'_{x,s_i,m}}{|\L(x,x_{+,\Xi})|}\Big|\Delta\mu_{\Q_{x,m}}(s_i)\Big).$$
Arguing as above, we may assume that $\Q_{x,m}$ converges to some $\Q_x$ as $m\to\infty$. 
In order to conclude the proof of the proposition, it therefore suffices to show that
$$\liminf_{m\to\infty}h\Big(\tfrac{a'_{x,s_i,m}}{|\L(x,x_{+,\Xi})|}\Big|\Delta\mu_{\Q_{x,m}}(s_i)\Big)\ge h\Big(\tfrac{a_{x,s_i}}{|\L(x,x_{+,\Xi})|}\Big|\Delta\mu_{\Q_x}(s_i)\Big).$$
A final application of lower semicontinuity completes the proof.
\end{proof}

\subsection{Application of Dawson-G\"artner \& identification of rate function}
In Proposition~\ref{finDimLem}, we have shown that the finite-dimensional distributions of the random fields $\{Y^{*,\t}\}_{\t<1}$ satisfy an LDP and we have also identified the good rate function. Hence, the Dawson-G\"artner Theorem~\cite[Theorem 4.6.1]{dz98} implies that the random fields $\{Y^{*,\t}\}_{\t<1}$ satisfy an LDP with respect to the topology of pointwise convergence and that the good rate function is given by 
$$\tilde{\mc{I}}(F)=\sup_{\Xi,S}\mc{I}_{\Xi,S}(F),$$
where the supremum is over all finite $S\subset[0,\infty)$ and $\Xi\subset[-|\L_1'|^{1/d}/2,|\L_1'|^{1/d}/2]$. The proof of Proposition \ref{genLDP} now amounts  to showing $\tilde{\mc{I}}(F)=\mc{I}(F)$.
This can be done using an adaptation of arguments appearing in the classical derivation of Mogulskii's Theorem provided in~\cite[Theorem 5.3.1]{dz98}. For the convenience of the reader, we provide some details.
\begin{proof}[Proof of Proposition \ref{genLDP}]
First assume that $F\in AC_0^1$ and let $f=\partial F/(\partial x\partial s)$ denote the density of $F$. By non-negativity of $\mc{I}_{\Xi,S}(F)$, we can assume $\xi_k=\L_1'|^{1/d}/2$.
Note that $\mc{I}_{\Xi,S}(F)$ is of the form $\sum_{x\in \Xi^d}|\Lambda_\Xi(x)|\inf_{\Q}f(F(x),\Q)$ where $f$ is convex in the pair $(F(x),\Q)$ by linearity of $\mu_\Q$. Hence, also $G(F(x))=\inf_{\Q}f(F(x),\Q)$ is convex in $F(x)$ so that Jensen's inequality gives that
$$\mc{I}_{\Xi,S}(F)\leq \int_{\L'_1}\inf_{\Q\in\mc{P}_\theta}h(\Q|\P)+\sum_{i=1}^kh\Big(\int_{s_{i-1}}^{s_i}f(x,s)\d s\big|\Delta\mu_{\Q}(s_i)\Big)\d x.$$
Again by convexity of $h$ and an application of Jensen's inequality, we can further estimate
\begin{equation*}
\begin{split}
h\Big(\int_{s_{i-1}}^{s_i}f(x,s)\d s\big|\Delta\mu_{\Q}(s_i)\Big)
&\le\int_{s_{i-1}}^{s_i}h\Big(f(x,s)\big|\tfrac{\d}{\d s}\mu_{\Q}(s)\Big)\d s.\cr
\end{split}
\end{equation*}
This proves $\tilde{\mc{I}}(F)\le\mc{I}(F)$. 
For the other direction we first consider the supremum over partitions $\Xi$ and let some $S$-partition be fixed. The idea is to use a volume partition into equal sub-cubes with side length going to zero as a lower bound. More precisely, let $\{\r_k(l)\}_{l=1}^{k^d}$ denote the disjoint partition of $\L_1'$ into cubes of volume $|\L_1'|/k^d$ and equal side length $\delta(k)=|\L_1'|^{1/d}/k$. Then,
\begin{equation*}
\begin{split}
\mc{I}_{\Xi,S}(F)&\ge\liminf_{k\to\infty}\sum_{l=1}^{k^d}\frac{1}{k^d}\inf_{\Q\in\mc{P}_\theta}h(\Q|\P)+\sum_{i=1}^rh\big(F(\r_k(l)\times (s_{i-1},s_i])\big|\Delta\mu_{\Q}(s_i)\big)\cr
&=\liminf_{k\to\infty}\int_{\L'_1}\inf_{\Q\in\mc{P}_\theta}h(\Q|\P)+\sum_{i=1}^rh\big(f^k_i(x)\big|\Delta\mu_{\Q}(s_i)\big)\d x\cr
\end{split}
\end{equation*}
where each $f^k_i(x)$ is constant on each of the cubes $\r_k(l)$, $l = 1,\dots,k^d$. By Lebesgue's theorem $f^k_i(x)\to\int_{s_{i-1}}^{s_i}f(x,s)\d s$ for Lebesgue almost all $x$ as $k$ tends to infinity. Therefore, by Fatou's lemma and the lower semicontinuity of the rate function
\begin{equation*}
\begin{split}
\sup_\Xi\mc{I}_{\Xi,S}(F)&\ge\liminf_{k\to\infty}\int_{\L'_1}\inf_{\Q\in\mc{P}_\theta}h(\Q|\P)+\sum_{i=1}^rh\big(f^k_i(x)\big|\Delta\mu_{\Q}(s_i)\big)\d x\cr
&\ge\int_{\L'_1}\liminf_{k\to\infty}\inf_{\Q\in\mc{P}_\theta}h(\Q|\P)+\sum_{i=1}^rh\big(f^k_i(x)\big|\Delta\mu_{\Q}(s_i)\big)\d x\cr
&=\int_{\L'_1}\inf_{\Q\in\mc{P}_\theta}h(\Q|\P)+\sum_{i=1}^rh\big(\int_{s_{i-1}}^{s_i}f(x,s)\d s\big|\Delta\mu_{\Q}(s_i)\big)\d x=\mc{I}_{S}(F).\cr
\end{split}
\end{equation*}
For the supremum over $S$-partitions we use the same approach and consider a partition of intervals $[0,k]$ for $k\in\N$ with constant mesh size $1/k$. Using Fatou's lemma, we have  
\begin{equation*}
\begin{split}
\sup_S\mc{I}_{S}(F)&\ge\int_{\L'_1}\liminf_{k\to\infty}\inf_{\Q\in\mc{P}_\theta}h(\Q|\P)+\sum_{i=1}^{k^2}\frac{1}{k}h\Big(\frac{\int_{(i-1)/k}^{i/k}f(x,s)\d s}{1/k}\Big|\frac{\Delta\mu_{\Q}(i/k)}{1/k}\Big)\d x,\cr
\end{split}
\end{equation*}
where we can look at $k\int_{(i-1)/k}^{{i/k}}f(x,s)\d s$ as a stepfunction $f_x^k$ on $[0,k]$. Similarly, for $k[\mu_{\Q}(i/k)-\mu_{\Q}((i-1)/k)]=k\int_{(i-1)/k}^{i/k}\tfrac{\d}{\d s}\mu_\Q(s)\d s$ with $g_\Q^k$ on $[0,k]$, we have that
\begin{equation*}
\begin{split}
\sum_{i=1}^{k^2}\frac{1}{k}h\Big(k\int_{s_{i-1}}^{s_i}f(x,s)\d s\big| k\Delta\mu_{\Q}(i/k)\Big)=\int_{0}^{k}h\big(f_x^k(r)| g_\Q^k(r)\big)\d r.\cr
\end{split}
\end{equation*}
Now fix $x\in\L_1$ and $k\ge1$ and let $\Q_k^x$ such that 
\begin{equation*}
\begin{split}
&\inf_{\Q\in\mc{P}_\theta}h(\Q|\P)+\int_{0}^{k}h(f_x^k(r)| f_\Q^k(r))\d r= h(\Q_k^x|\P)+\int_{0}^{k}h\big(f_x^k(r)| g_{\Q_k^x}^k(r)\big)\d r,
\end{split}
\end{equation*}
which exists since lower-semicontinuous functions assume their minimum on compact sets. Let $k_n$ be the subsequence such that the limit inferior becomes a limit and for simplicity write again $k$.
Further we can assume $\sup_kh(\Q_k^x|\P)<\infty$ for Lebesgue almost all $x$ since otherwise there is nothing to show. Since $h(\cdot|\P)$ has sequentially compact level sets there exists a cluster point $\Q^x_*$ of $(\Q_k^x)_{k\in\N}$ and by lower semicontinuity 
and Fatou's lemma, we have 
\begin{equation*}
\begin{split}
\sup_S\mc{I}_{S}(F)&\ge\int_{\L'_1}h(\Q_*^x|\P)+\int_{0}^{\infty}\liminf_{k\to\infty}h\big(f_x^k(r)\big| g_{\Q_k^x}^k(r)\big)\d r\d x.
\end{split}
\end{equation*}
Note that by Lebesgue's theorem for almost all $s\in[0,\infty)$, $\liminf_{k\to\infty}f_x^k(s)=f(x,s)$. Further note that also 
$\liminf_{k\to\infty}g_{\Q_k^x}^k(s)=\tfrac{\d}{\d s}\mu_{\Q^x_*}(s)$. Indeed by the mean value theorem for $s\in((i-1)/k,i/k)$ there exists $s'\in((i-1)/k,i/k)$ such that $g_{\Q_k^x}^k(s)=\tfrac{\d}{\d s}\mu_{\Q^x_k}(s')$ and
\begin{equation*}
\begin{split}
|g_{\Q_k^x}^k(s)-\tfrac{\d}{\d s}\mu_{\Q^x_*}(s)|\le|\tfrac{\d}{\d s}\mu_{\Q^x_k}(s')-\tfrac{\d}{\d s}\mu_{\Q^x_k}(s)|+|\tfrac{\d}{\d s}\mu_{\Q^x_k}(s)-\tfrac{\d}{\d s}\mu_{\Q^x_*}(s)|.
\end{split}
\end{equation*}
The second summand on the right tends to zero as $k$ tends to infinity by Corollary~\ref{qContCor}. For the first term we have by Lebesgue's theorem
\begin{equation*}
\begin{split}
|\tfrac{\d}{\d s}\mu_{\Q^x_k}(s')-\tfrac{\d}{\d s}\mu_{\Q^x_k}(s)|&\le\lambda_{\ms{R}}\Q^x_k(\E(\tfrac{\d}{\d s}_{|_{s=s'}}q(sI(o)^{-1})-\tfrac{\d}{\d s}_{|_{s=s}}q(sI(o)^{-1})|X))\cr
&\le Nw^{-2}|s'-s|
\end{split}
\end{equation*}
which tends to zero as $k$ tends to infinity and thus $\liminf_{k\to\infty}g_{\Q_k^x}^k(s)=\tfrac{\d}{\d s}\mu_{\Q^x_*}(s)$. 
Using this and lower semicontinuity gives 
 $$\liminf_{k\to\infty}h(f_x^k(s)| g_{\Q_k^x}^k(s))\ge h(f(x,s)\big|\tfrac{\d}{\d s}\mu_{\Q^x_*}(s)),$$
as required.

\medskip
Finally let $F\notin AC_0$. 
First, for any $\e>0$ there exists $\Q_{S,x}\in\mc{P}_\theta$ such that 
\begin{equation*}
\begin{split}
\tilde{\mc{I}}(F)&\ge\sup_{\Xi,S}\Big[\sum_{x\in \Xi^d}|\L_\Xi(x)|h(\Q_{S,x}|\P)\\
&\phantom{\ge}+\sum_{i=1}^r|\L_{\Xi}(x)|h\Big(\tfrac{F(\L_{\Xi}(x)\times(s_{i-1},s_i])}{|\L_{\Xi}(x)|}\big|\Delta\mu_{\Q_{S,x}}(s_i)\Big)\Big]-\e\cr
&\ge\sup_{\Xi,S}\Big[\sum_{x\in \Xi^d}\sum_{i=1}^r|\L_{\Xi}(x)|h\Big(\tfrac{F(\L_{\Xi}(x)\times(s_{i-1},s_i])}{|\L_{\Xi}(x)|}\big|\Delta\mu_{\Q_{S,x}}(s_i)\Big)\Big]-\e\cr
&=\sup_{\Xi,S}\Big[\sum_{x\in \Xi^d}\sum_{i=1}^r|\L_{\Xi}(x)|\sup_\r[\r\tfrac{F(\L_{\Xi}(x)\times(s_{i-1},s_i])}{|\L_{\Xi}(x)|}-(e^\r-1)\Delta\mu_{\Q_{S,x}}(s_i)]\Big]-\e\cr
\end{split}
\end{equation*}
using also the Legendre transform of the relative entropy. Further, we have 
\begin{equation*}
\begin{split}
|\mu_{\Q_{S,x}}(s_i)-\mu_{\Q_{S,x}}(s_{i-1})|&\le\lambda_{\ms{R}}|\Q_{S,x}(\Gamma(s_i,o)-\Gamma(s_{i-1},o)|\le N\lambda_{\ms{R}}w^{-2}|s_i-s_{i-1}|,
\end{split}
\end{equation*}
and hence for $\r\ge0$
\begin{align*}
\tilde{\mc{I}}(F)
&\ge\r\sup_{\Xi,S}\big[\sum_{x\in \Xi^d}\sum_{i=1}^r[F(\L_{\Xi}(x)\times(s_{i-1},s_i])-(e^\r-1)N\lambda_{\ms{R}}|\L_{\Xi}(x)||s_i-s_{i-1}|]\big]-\e.
\end{align*}
If $F$ is not right-continuous, there exists a point $(x,s)$ such that $F(x,s)<\lim_{n\to\infty}F(x+1/n,s+1/n)=M$. Consider a sequence of finite partitions $(\Xi_n,S_n)_{n\in\N}$ where the cube $(\prod_{j=1}^d(x_j,x_j+1/n])\times(s,s+1/n]$ is contained in $(\Xi_n,S_n)$ for all $n\in\N$. Then
\begin{align*}
\begin{split}
\tilde{\mc{I}}(F)&\ge\r\big[F(x+1/n,s+1/n)-F(x,s)-(e^\r-1)N\lambda_{\ms{R}}1/n^{d+1}\big]-\e
\end{split}
\end{align*}
and letting $n$ tend to infinity gives $\tilde{\mc{I}}(F)\ge\r[M-F(x,s)]-\e$ which tends to infinity for $\r\to\infty$.

If $F$ is right-continuous but $F\notin AC_0$ there exists $\de>0$ and a sequence of measurable sets $A_k$, with $\nu_{d+1}(A_k)\to0$ and $\mu_F(A_k)\ge\de$. Using the regularity of the Lebesgue measure there exists a disjoint union of countably many $d+1$-dimensional cuboids such that $A_k\subset\bigcup_lq^k_l$ and 
$\nu_{d+1}(\bigcup_lq^k_l\setminus A_k)<\frac{1}{k}$. Then, for every $\r\ge0$,
\begin{equation*}
\begin{split}
\tilde{\mc{I}}(F)&\ge\r\sum_{l=1}^\infty F(q^k_l)-(e^\r-1)N\lambda_{\ms{R}}\nu_{d+1}(\bigcup_lq^k_l)-\e\cr
&\ge\r\mu_F(A_k)-(e^\r-1)N\lambda_{\ms{R}}(\nu_{d+1}(A_k)+1/k)-\e.\cr
\end{split}
\end{equation*}
Letting $k$ tend to infinity we have $\tilde{\mc{I}}(F)\ge\r\de-\e$ which tends to infinity as $\r$ tends to infinity.
\end{proof}

\subsection{Contraction principle \& identification of rate function}
\label{cpSec}
In the present section, we apply the contraction principle to derive Theorem~\ref{lowTauProbThm} from Proposition~\ref{genLDP}. Consider the function $\Psi:\mc{K}\to\mc{M}(\L'_1)$ given by 
$$F(\cdot)\mapsto F\Big((\cdot\times [0,\infty))\cap\{(y,s)\in\L'_1\times[0,\infty):s\le|y|^{-\alpha}\}\Big).$$
Then, the random measure $\Psi(|\L_\t'|^{-1}Y^{*,\t}(\cdot,\cdot))$ is exponentially equivalent to the random measure $|\L_\t'|^{-1}Y^{\t}(t^{-\beta}\cdot)$.
Moreover, $\Psi$ is continuous when restricted to the subset $L_{\ms{inc},0}(\L'_1\times[0,\infty))$ of $L_{\ms{inc}}(\L'_1\times[0,\infty))$ consisting of those $F$ with $\mu_F(\partial M)=0$, where 
$$M=\{(y,s)\in\L'_1\times[0,\infty):s\le |y|^{-\alpha}\}$$
and  $\mc{K}\subset L_{\ms{inc}}(\L'_1\times [0,\infty))$ denotes the family of all $[0,\infty)$-valued, bounded, increasing and right-continuous functions.
 Since the Lebesgue measure of $\partial M$ is $0$, the rate function from the LDP of Proposition~\ref{genLDP} is infinite on the complement of $\mc{K}$. Hence,~\cite[Lemma 4.1.5]{dz98} shows that 
the random fields $\big\{|\L_\t'|^{-1}Y^{*,\t}(\cdot,\cdot)\}_{\t<1}$ also satisfy an LDP on $\mc{K}$.  Hence, the contraction principle  applies and it remains to identify the rate function. That is, we need to show that 
\begin{align*}
&\inf_{\substack{F\in \mc{K} \\ G(\cdot)=F(\cdot\one_M)}}\int_{\L'_1}\inf_{\Q}h(\Q|\P)+\int_0^\infty h\Big(f(y,s)|\frac{\d}{\d s}\mu_\Q(s)\Big)\d s\d y\\
&\quad=\int_{\L'_1}\inf_{\Q}h(\Q|\P)+ h\big(g(y)|\mu_\Q(|y|^{-\alpha})\big)\d y,
\end{align*}
where $f=\partial F/(\partial y\partial s)$ and $g=\partial G/\partial y$ denote the Radon-Nikodym derivatives of $F$ and $G$, respectively. Note that if $G$ was not absolutely continuous, then neither could be $F$, so that the left-hand side would be infinity.
We show that the equality arises as a consequence of two inequalities. First, we consider the direction $\ge$. As in the proof of Proposition \ref{genLDP}, an application of Jensen's inequality implies that
$$\int_{0}^{|y|^{-\alpha}} h\big(f(y,s)|\frac{\d}{\d s}\mu_\Q(s)\big)\d s\ge h\Big(\int_{0}^{|y|^{-\alpha}}f(y,s)\d s|\mu_\Q(|y|^{-\alpha})\Big).$$
The right-hand side is equal to $h(g(y)|\mu_\Q(|y|^{-\alpha}))$ if $G(\cdot)=F(\cdot\one_M)$.

The other direction is more involved. First, we proceed as in the proof of Proposition \ref{genLDP} and note that the right-hand side can be approximated using a suitable discretization. To be more precise, let $\{\rho(l)\}_{l=1}^{2^{dk}}$ be a subdivision of $\L_1'$ into congruent cubes of side length $\delta(k)=|\L_1'|^{1/d}2^{-k}$. The point in the $l$-th cube which minimizes the distance to the origin will be denoted by $y_{k,l}$. In the first step of the discretization, we replace the expression $\mu_\Q(|y|^{-\alpha})$ by $\mu_\Q(|y_{k,l}|^{-\alpha})$.

\begin{lemma}
\label{cpApp1Lem}
\begin{align*}
&\limsup_{k\to\infty}\sum_{l=1}^{2^{dk}}\int_{\rho(l)}\inf_{\Q\in\mc{P}_\theta}h(\Q|\P)+h\big(g(y)\big|\mu_\Q(|y_{k,l}|^{-\alpha})\big)\d y\\
&\quad\le\int_{\L'_1}\inf_{\Q\in\mc{P}_\theta}h(\Q|\P)+ h\big(g(y)\big|\mu_\Q(|y|^{-\alpha})\big)\d y.
\end{align*}
\end{lemma}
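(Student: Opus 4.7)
The plan is to replace the pointwise infimum over $\Q\in\mc{P}_\theta$ on the left-hand side by a measurable $\varepsilon$-approximate minimizer $y\mapsto\Q^y$ coming from the infimum on the right-hand side, and then to pass to the limit $k\to\infty$ via a reverse Fatou argument. Throughout we may assume that the right-hand side is finite, since otherwise the inequality is trivial.

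Fix $\varepsilon>0$. First I would invoke a standard measurable-selection theorem, using that $\mc{P}_\theta$ in the $\tau_\mc{L}$-topology is Polish and that on level sets of $h(\cdot|\P)$ the map $\Q\mapsto\mu_\Q(|y|^{-\alpha})$ is continuous by Corollary~\ref{qContCor}, to produce a measurable map $y\mapsto\Q^y\in\mc{P}_\theta$ satisfying
$$h(\Q^y|\P)+h\big(g(y)\big|\mu_{\Q^y}(|y|^{-\alpha})\big)\le\inf_{\Q\in\mc{P}_\theta}\big[h(\Q|\P)+h\big(g(y)\big|\mu_\Q(|y|^{-\alpha})\big)\big]+\varepsilon.$$
Plugging $\Q^y$ into the infimum on the left-hand side and using that $y_{k,l(y,k)}$ is the origin-closest point of the cube $\rho(l(y,k))\ni y$, so that $|y_{k,l(y,k)}|\le|y|$ and therefore $\mu_{\Q^y}(|y_{k,l(y,k)}|^{-\alpha})\in[\mu_{\Q^y}(|y|^{-\alpha}),\lambda_{\ms{R}}]$ (the upper bound following from $q\le 1$), the left-hand side is bounded by
$$\int_{\L_1'}\big[h(\Q^y|\P)+h\big(g(y)\big|\mu_{\Q^y}(|y_{k,l(y,k)}|^{-\alpha})\big)\big]\d y.$$
As $k\to\infty$ the mesh size $\delta(k)$ tends to zero, so $|y_{k,l(y,k)}|\to|y|$ for every $y\ne o$; combined with continuity of $s\mapsto\mu_{\Q^y}(s)$ and of $h(g(y)|\cdot)$ at $\mu_{\Q^y}(|y|^{-\alpha})$ (which must be strictly positive on $\{g>0\}$ because the right-hand side is finite, while on $\{g=0\}$ the integrand collapses to $\mu_{\Q^y}(|y_{k,l}|^{-\alpha})\le\lambda_{\ms{R}}$), this yields pointwise convergence of the integrand to $h(\Q^y|\P)+h(g(y)|\mu_{\Q^y}(|y|^{-\alpha}))$.

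The main obstacle is producing a $k$-independent integrable dominating function so that reverse Fatou applies, and I would address this using convexity of $b\mapsto h(a|b)$. Since this map is convex with minimum at $b=a$, any $b$ in the interval $[\mu_{\Q^y}(|y|^{-\alpha}),\lambda_{\ms{R}}]$ satisfies $h(g(y)|b)\le\max\{h(g(y)|\mu_{\Q^y}(|y|^{-\alpha})),h(g(y)|\lambda_{\ms{R}})\}$, while the elementary identity $h(a|b_2)-h(a|b_1)=a\log(b_1/b_2)+b_2-b_1$ applied with $b_1=\mu_{\Q^y}(|y|^{-\alpha})\le b_2=\lambda_{\ms{R}}$ gives $h(g(y)|\lambda_{\ms{R}})\le h(g(y)|\mu_{\Q^y}(|y|^{-\alpha}))+\lambda_{\ms{R}}$. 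Thus the integrand is uniformly dominated by $h(\Q^y|\P)+h(g(y)|\mu_{\Q^y}(|y|^{-\alpha}))+\lambda_{\ms{R}}$, which is integrable by $\varepsilon$-near optimality of $\Q^y$ and the assumed finiteness of the right-hand side. Applying reverse Fatou and then sending $\varepsilon\to 0$ completes the argument.
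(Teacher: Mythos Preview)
Your argument is correct, but it follows a genuinely different route from the paper's proof. Both proofs ultimately rest on the same two ingredients: the monotonicity $\mu_\Q(|y_{k,l}|^{-\alpha})\ge\mu_\Q(|y|^{-\alpha})$ coming from the choice of $y_{k,l}$, and the elementary identity $h(a|b_2)-h(a|b_1)=a\log(b_1/b_2)+b_2-b_1$. The paper, however, applies this identity \emph{before} taking the infimum over $\Q$: it observes that for every $\Q$ the difference $h(g(y)|\mu_\Q(|y_{k,l}|^{-\alpha}))-h(g(y)|\mu_\Q(|y|^{-\alpha}))$ is bounded by $|\mu_\Q(|y_{k,l}|^{-\alpha})-\mu_\Q(|y|^{-\alpha})|$, which is uniformly at most $\lambda_{\ms{R}}$ and, away from the origin, is controlled by the Lipschitz estimate~\eqref{muDiffEq}. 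This gives a direct, quantitative bound on the difference of the two integrals that vanishes as $k\to\infty$, with no need for selection or dominated convergence. Your approach instead fixes a near-optimal $\Q^y$ via measurable selection, establishes a $k$-independent integrable majorant using the convexity bound, and then passes to the limit by reverse Fatou. This buys you a proof that is arguably more conceptual and would transfer more easily to settings where the uniform-in-$\Q$ Lipschitz bound is unavailable; the price is the appeal to a measurable-selection theorem, which the paper's argument avoids entirely. (In fact your domination step can be carried out uniformly in $\Q$ just as the paper does, which would let you drop the selection step while keeping the dominated-convergence structure.)
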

\begin{proof}
First, note that for every $l\in\{1,\ldots,2^{dk}\}$, $y\in\rho(l)$ and $\Q\in\mc{P}_\theta$ we get that 
\begin{align*}
&h\big(g(y)\big|\mu_\Q(|y_{k,l}|^{-\alpha})\big)-h\big(g(y)\big|\mu_\Q(|y_{}|^{-\alpha})\big)\\
&\quad\le g(y)\log\frac{\mu_\Q(|y_{}|^{-\alpha})}{\mu_\Q(|y_{k,l}|^{-\alpha})}+|\mu_\Q(|y_{k,l}|^{-\alpha})-\mu_\Q(|y|^{-\alpha})|\\
&\quad\le|\mu_\Q(|y_{k,l}|^{-\alpha})-\mu_\Q(|y|^{-\alpha})|
\end{align*}
where the last inequality follows from the choice of $y_{k,l}$.  In particular, the right-hand side is always bounded above by $1$. Moreover, for $\varepsilon>0$ we let $A_{\varepsilon}^{}=\{l\in\{1,\ldots,2^{dk}\}:\, \min_{y\in\rho(l)}|y|<\varepsilon\}$ denote the set of indices of cubes that are close to the origin. Then, the Lipschitz assumption implies that for every $l\not\in A_{\varepsilon}$, $y\in \rho(l)$ and $\Q\in\mc{P}_\theta$,
\begin{align}
\label{muDiffEq}
|\mu_\Q(|y_{k,l}|^{-\alpha})-\mu_\Q(|y|^{-\alpha})|\le N\alpha|y_{k,l}|^{-\alpha-1}|y-y_{k,l}|\le N\alpha\varepsilon^{-\alpha-1}\sqrt{d}\delta(k)^{-1}.
\end{align}
 Hence,
\begin{align*}
&\sum_{l=1}^{2^{dk}}\int_{\rho(l)}\inf_{\Q\in\mc{P}_\theta}h(\Q|\P)+h\big(g(y)\big|\mu_\Q(|y_{k,l}|^{-\alpha})\big)\d y\\
&\quad-\Big(\int_{\L'_1}\inf_{\Q\in\mc{P}_\theta}h(\Q|\P)+ h\big(g(y)\big|\mu_\Q(|y|^{-\alpha})\big)\d y\Big)\\
&\quad\le |\rho(1)|\#A_{\varepsilon}^{}+N\alpha\varepsilon^{-\alpha-1}\sqrt{d}\delta(k)\le 2^d\varepsilon^{d}+N\alpha\varepsilon^{-\alpha-1}\sqrt{d}\delta(k),
\end{align*}
provided that $k\ge1$ is sufficiently large. Since $\varepsilon>0$ was arbitrary, this completes the proof.
\end{proof}
The next lemma is proved similarly to Proposition \ref{genLDP} using
Jensen's inequality and a discretization of the integral. We omit the proof.
\begin{lemma}
\label{cpApp2Lem}
Let $k\ge1$ and $1\le l\le 2^{dk}$ be arbitrary. Then,
\begin{align*}
&|\rho(l)|^{-1}\int_{\rho(l)}\inf_{\Q\in\mc{P}_\theta}h(\Q|\P)+h\big(g(y)\big|\mu_\Q(|y_{k,l}|^{-\alpha})\big)\d y\\
&\quad\ge\inf_{\Q\in\mc{P}_\theta}h(\Q|\P)+h\big(|\rho(l)|^{-1}G(\rho(l))\big|\mu_\Q(|y_{k,l}|^{-\alpha})\big)\d y.
\end{align*}
\end{lemma}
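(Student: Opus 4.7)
The plan is a direct Jensen argument. Set
$$J(a) := \inf_{\Q \in \mc{P}_\theta}\Big[h(\Q|\P) + h\big(a \, \big| \, \mu_\Q(|y_{k,l}|^{-\alpha})\big)\Big],$$
so that the left-hand side of the claimed inequality equals $|\rho(l)|^{-1}\int_{\rho(l)} J(g(y))\d y$, while the right-hand side equals $J(|\rho(l)|^{-1} G(\rho(l)))=J\big(|\rho(l)|^{-1}\int_{\rho(l)} g(y)\d y\big)$, using the absolute continuity identity $G(\rho(l)) = \int_{\rho(l)} g(y)\d y$. Thus the lemma reduces to Jensen's inequality applied to $J$ with respect to the probability measure $|\rho(l)|^{-1}\d y$ on $\rho(l)$, once $J$ has been shown to be convex on $[0,\infty)$.

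To verify this convexity, I would show that the two-variable map $(a,\Q)\mapsto h(\Q|\P)+h\big(a\,|\,\mu_\Q(|y_{k,l}|^{-\alpha})\big)$ is jointly convex. Indeed, $h(\Q|\P)$ is a standard convex functional of $\Q$; the map $\Q \mapsto \mu_\Q(|y_{k,l}|^{-\alpha}) = \lambda_{\ms{R}}\Q(\Gamma(|y_{k,l}|^{-\alpha},o))$ is affine in $\Q$; and $(a,c)\mapsto h(a|c)=a\log(a/c)-a+c$ is jointly convex on $[0,\infty)\times(0,\infty)$, being the perspective of $a\mapsto a\log a$. Joint convexity is preserved under the affine composition in the $\Q$-coordinate, and the infimum of a jointly convex function over one of its variables is convex in the remaining variable, so $J$ is convex as required.

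With convexity in hand, Jensen's inequality yields
$$|\rho(l)|^{-1}\int_{\rho(l)} J(g(y))\d y \;\ge\; J\Big(|\rho(l)|^{-1}\int_{\rho(l)} g(y)\d y\Big),$$
which is precisely the claim. I do not anticipate any real difficulty here: the argument is purely convex-analytic and exactly parallels the Jensen step invoked at the start of the proof of Proposition~\ref{genLDP}; the only point that requires care is the joint convexity bookkeeping, which is routine given the linearity of $\Q\mapsto\mu_\Q(\cdot)$.
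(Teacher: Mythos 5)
Your argument is correct and is precisely the Jensen step the paper alludes to in its one-line proof sketch (which points back to the joint-convexity, partial-minimization, and Jensen reasoning already carried out in the proof of Proposition~\ref{genLDP}). The only cosmetic slip is that $h(a|c)=a\log(a/c)-a+c$ is the perspective of $u\mapsto u\log u - u + 1$ rather than of $u\mapsto u\log u$; since the two differ only by an affine term in $(a,c)$, the joint convexity you need is unaffected and the conclusion stands.
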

Now that we have discretized the integral, we can define approximations $F^{(k)}$ to the desired function $F$. For this purpose, we first need to construct certain minimizers. Recall from Corollary~\ref{qContCor} that the function $\Q\mapsto\mu_\Q(|y_{k,l}|^{-\alpha})$ is continuous on every set of the form $\{\Q:\,h(\Q|\P)\le K\}$ for some  $K<\infty$. Therefore, the function
$$\Q\mapsto h(\Q|\P)+h\big(|\rho(l)|^{-1}G(\rho(l))\big|\mu_\Q(|y_{k,l}|^{-\alpha})\big)$$
is lower semicontinuous, and we let $\Q_{k,l}$ be one of its minimizers. 
Now, define measurable functions $f^{(k)}:\L'_1\to[0,\infty]$, $k\ge1$ by 
\begin{align*}
f^{(k)}(y,s)=
\begin{cases}
\frac{|\rho(l)|^{-1}G(\rho(l))\frac{\d}{\d s}\mu_{\Q_{k,l}}(s)}{\mu_{\Q_{k,l}}(|y_{k,l}|^{-\alpha})}&\text{if $y\in\rho(l)$ and $s\le|y_{k,l}|^{-\alpha}$},\\
\frac{\d}{\d s}\mu_{\Q_{k,l}}( s)&\text{if $y\in\rho(l)$ and $s>|y_{k,l}|^{-\alpha}$.}
\end{cases}
\end{align*}
Here, we make the convention that the first line is equal to zero if $\mu_{\Q_{k,l}}(|y_{k,l}|^{-\alpha})=G(\rho(l))=0$ and is equal to infinity if $\mu_{\Q_{k,l}}(|y_{k,l}|^{-\alpha})=0$, but $G(\rho(l))\ne0$.
Furthermore, we let $F^{(k)}$ denotes the distribution function of the measure with density $f^{(k)}(y,s)$. Then, for every $y\in\rho(l)$,
\begin{align*}
&\inf_{\Q\in\mc{P}_\theta}h(\Q|\P)+\int_{0}^\infty h\Big(f^{(k)}(y,s)\big|\frac{\d}{\d s}\mu_{\Q}(s)\Big)\d s\\
&\quad\le h(\Q_{k,l}|\P)+\int_{0}^{|y_{k,l}|^{-\alpha}} h\Big(\frac{|\rho(l)|^{-1}G(\rho(l))\frac{\d}{\d s}\mu_{\Q_{k,l}}(s)}{\mu_{\Q_{k,l}}(|y_{k,l}|^{-\alpha})} \big|\frac{\d}{\d s}\mu_{\Q_{k,l}}(s)\Big)\d s \\
&\quad=h(\Q_{k,l}|\P)+h\big(|\rho(l)|^{-1}G(\rho(l))\big|\mu_{\Q_{k,l}}(|y_{k,l}|^{-\alpha})\big).
\end{align*}

By the goodness of the rate function in Proposition~\ref{genLDP}, the functions $(F^{(k)})_{k\ge1}$ have an accumulation point, and lower-semicontinuity therefore gives that 
\begin{align*}
\int_{\L_1'}\inf_{\Q\in\mc{P}_\theta}h(\Q|\P)+\int_0^\infty h\big(f(y,s)|\mu_\Q(s)\big)\d s\le \int_{\L'_1}\inf_{\Q\in\mc{P}_\theta}h(\Q|\P)+ h\big(g(y)|\mu_\Q(|y|^{-\alpha})\big)\d y.
\end{align*}
Hence, it remains to show that the measure induced by $G(\cdot)$ coincides with the measure induced by $F(\cdot \one_M)$. In order to prove this claim, we first show that $F^{(k)}(M^{(k)}\setminus M)$ tends to zero as $k$ tends to infinity, where 
$M^{(k)}=\{(y,s)\in\L'_1\times[0,\infty):\,y\in\rho(l) \text{ and }s\le |y_{k,l}|^{-\alpha}\}$
\begin{lemma}
\label{mDiffSet}
The expression $F^{(k)}(M^{(k)}\setminus M)$ tends to zero as $k$ tends to infinity.
\end{lemma}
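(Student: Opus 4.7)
The plan is to exploit the inclusion $M \subset M^{(k)}$, which follows since for $y \in \rho(l)$ we have $|y_{k,l}| \le |y|$, hence $|y|^{-\alpha} \le |y_{k,l}|^{-\alpha}$. Thus $F^{(k)}(M^{(k)} \setminus M) = F^{(k)}(M^{(k)}) - F^{(k)}(M)$. A direct computation using $\int_0^{|y_{k,l}|^{-\alpha}} f^{(k)}(y,s)\,ds = G(\rho(l))/|\rho(l)|$ for $y \in \rho(l)$ shows $F^{(k)}(M^{(k)}) = G(\L'_1)$, while
\[
F^{(k)}(M) = \sum_l \frac{G(\rho(l))}{m_l\,|\rho(l)|}\int_{\rho(l)} \mu_{Q_{k,l}}(|y|^{-\alpha})\,dy,
\]
where $m_l := \mu_{Q_{k,l}}(|y_{k,l}|^{-\alpha})$. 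Consequently,
\[
F^{(k)}(M^{(k)} \setminus M) = \sum_l \frac{G(\rho(l))}{m_l\,|\rho(l)|}\int_{\rho(l)} \bigl[m_l - \mu_{Q_{k,l}}(|y|^{-\alpha})\bigr]\,dy.
\]
We may assume $G$ is absolutely continuous, since otherwise the right-hand side of the rate-function identification is $+\infty$ and the claim is trivial.

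The cubes will be partitioned according to whether $|y_{k,l}| < \varepsilon$ (i.e., $l \in A_\varepsilon$) or $|y_{k,l}| \ge \varepsilon$. For $l \in A_\varepsilon$ the trivial inequality $\mu_{Q_{k,l}}(|y|^{-\alpha}) \ge 0$ bounds the contribution by $\sum_{l \in A_\varepsilon} G(\rho(l)) \le G(B_{2\varepsilon} \cap \L'_1)$, which tends to zero as $\varepsilon \downarrow 0$ by absolute continuity of $G$, using $B_{2\varepsilon} \downarrow \{0\}$.

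For $l \notin A_\varepsilon$, the uniform Lipschitz bound $\mu_Q' \le N\lambda_{\ms{R}}/w$ (which holds since $q$ is $N$-Lipschitz and $I(o) \ge w$) together with the estimate $\bigl||y_{k,l}|^{-\alpha} - |y|^{-\alpha}\bigr| \le \alpha \varepsilon^{-\alpha-1} \sqrt{d}\,\delta(k)$ from \eqref{muDiffEq} yields $m_l - \mu_{Q_{k,l}}(|y|^{-\alpha}) \le C(\varepsilon)\delta(k)$, uniformly in $y \in \rho(l)$ and in the minimizer $Q_{k,l}$. Substituting, the contribution from these cubes is bounded by $C(\varepsilon)\delta(k) \sum_{l \notin A_\varepsilon} G(\rho(l))/m_l$, and the main obstacle will be to control the denominator $m_l$ from below. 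I expect to address this by exploiting the variational definition of $Q_{k,l}$: testing against the trial measure $Q = \P$ yields the inequality $h(a_l|m_l) \le h(a_l|\mu_\P(|y_{k,l}|^{-\alpha}))$, where $a_l := G(\rho(l))/|\rho(l)|$, from which an elementary manipulation provides a lower bound of the form $m_l \ge c(\varepsilon) a_l$ on the regime $|y_{k,l}|^{-\alpha} > ws_{\ms{min}}$ (outside of this regime $\mu_Q \equiv 0$ for every $Q$, forcing $a_l = 0$). This converts the contribution from cubes outside $A_\varepsilon$ into at most $c(\varepsilon)^{-1} C(\varepsilon)\delta(k)|\L'_1|$, which vanishes as $k \to \infty$ because $\delta(k) \to 0$. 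Sending first $k \to \infty$ and then $\varepsilon \downarrow 0$ concludes the argument.
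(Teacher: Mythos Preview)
Your overall structure mirrors the paper's: express $F^{(k)}(M^{(k)}\setminus M)$ as in the display you wrote, split cubes into those near the origin (handled via absolute continuity of $G$) and those away from it (handled via the Lipschitz bound~\eqref{muDiffEq}), and then control $\sum_{l}G(\rho(l))/m_l$. The setup and the first part are fine.

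The gap is in the last step. Testing the minimizer against $\Q=\P$ only yields $h(a_l\,|\,m_l)\le h(a_l\,|\,\mu_\P)$, and this does \emph{not} give $m_l\ge c(\varepsilon)a_l$. Writing $h(a\,|\,m)=a\bigl(\tfrac{m}{a}-\log\tfrac{m}{a}-1\bigr)$ and setting $x=m_l/a_l$, $R=\mu_\P/a_l$, the inequality becomes $\phi(x)\le\phi(R)$ with $\phi(t)=t-\log t$. When $a_l$ is small (so $R$ is large), the sublevel set of $\phi$ reaches down to $x\approx e^{-R}$; hence the best you can extract is $m_l\ge a_l\,e^{-\mu_\P/a_l}$, which gives no uniform control on $a_l/m_l$. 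So the ``elementary manipulation'' you allude to does not exist in this form.

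The paper closes this gap by testing not only against $\P$ but against the convex combination $\Q^*=\lambda\P+(1-\lambda)\Q_{k,l}$, chosen so that $\mu_{\Q^*}(|y_{k,l}|^{-\alpha})=a_l$. Using that $\Q\mapsto\mu_\Q$ is linear and $\Q\mapsto h(\Q|\P)$ is affine, one gets $h(\Q^*|\P)+h(a_l|a_l)=(1-\lambda)h(\Q_{k,l}|\P)<h(\Q_{k,l}|\P)+h(a_l|m_l)$ whenever $m_l<a_l$, contradicting minimality. Combined with the trivial case $a_l\ge\mu_\P$, this yields the dichotomy: either $m_l\ge a_l$ (so $G(\rho(l))/m_l\le|\rho(l)|$) or $m_l\ge 1/K(\varepsilon)$ (so $G(\rho(l))/m_l\le K\,G(\rho(l))$). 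Either way the sum is bounded by $|\L_1'|+K\,G(\L_1')$, and the factor $\delta(k)$ finishes the job. A minor additional point: the paper's set $A_\varepsilon$ also excludes cubes near the outer boundary of the support (where $|y_{k,l}|^{-\alpha}$ is close to $ws_{\ms{min}}$ and $\mu_\P$ is small but positive); your partition does not, and the uniform lower bound $\mu_\P\ge1/K(\varepsilon)$ fails there.
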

\begin{proof}
First, observe that $F^{(k)}(M^{(k)}\setminus M)$ can be expressed as 
\begin{align*}
F^{(k)}(M^{(k)}\setminus M)=\sum_{l=1}^{2^{dk}}G(\rho(l))|\rho(l)|^{-1}\int_{\rho(l)}\frac{\mu_{\Q_{k,l}}(|y_{k,l}|^{-\alpha})-\mu_{\Q_{k,l}}(|y|^{-\alpha})}{\mu_{\Q_{k,l}}(|y_{k,l}|^{-\alpha})}\d y.
\end{align*}
Now, for $\varepsilon>0$ introduce the set 
$$A_{\varepsilon}^{}=\{l\in\{1,\ldots,2^{dk}\}:\, \min_{y\in\rho(l)}|y|<\varepsilon \text{ or }\max_{y\in\rho(l)}|y|^{-\alpha}>ws_{\ms{min}}-\varepsilon\}$$
of indices whose associated cubes are far away from the origin and the boundary of the ball $B_{(ws_{\ms{min}})^{-1/\alpha}}(o)$.
Hence, we arrive at 
\begin{align}
\label{fkEq}
F^{(k)}(M^{(k)}\setminus M)&=\alpha\varepsilon^{-\alpha-1} N\sqrt{d}\delta(k)\sum_{l\not\in A_{\varepsilon}} \frac{G(\rho(l))}{\mu_{\Q_{k,l}}(|y_{k,l}|^{-\alpha})}+r(\varepsilon),
\end{align}
where $r(\varepsilon)$ tends to zero as $\varepsilon$ tends to zero. Since the sum above consists of at most $2^{dk}$ summands, it suffice to consider those $l\not\in A_\varepsilon$ that satisfy $|\rho(l)|^{-1}{G(\rho(l))}>{\mu_{\Q_{k,l}}(|y_{k,l}|^{-\alpha})}$. Now, note that if $l\not\in A_\varepsilon$, then $\mu_{\P}(|y_{k,l}|^{-\alpha})\ge 1/K$ for some sufficiently large $K=K(\varepsilon)$ not depending on $k,l$. We claim that also ${\mu_{\Q_{k,l}}(|y_{k,l}|^{-\alpha})}\ge 1/K$. Once this is shown, the proof is complete.

Suppose that ${\mu_{\Q_{k,l}}(|y_{k,l}|^{-\alpha})}<1/K$. First, if $|\rho(l)|^{-1}G(\rho(l))\ge \mu_\P(|y_{k,l}|^{-\alpha})$, then 
$$h\big(|\rho(l)|^{-1}G(\rho(l))|\mu_\P(|y_{k,l}|^{-\alpha})\big)< h\big(|\rho(l)|^{-1}G(\rho(l))|\mu_{\Q_{k,l}}(|y_{k,l}|^{-\alpha})\big),$$
which contradicts the minimality of $\Q_{k,l}$. Otherwise put $\Q^*=\lambda \P+(1-\lambda){\Q_{k,l}}$, where $\lambda\in[0,1)$ is chosen such that
$$\lambda \mu_{\P}(|y_{k,l}|^{-\alpha})+(1-\lambda) \mu_{\Q_{k,l}}(|y_{k,l}|^{-\alpha})=|\rho(l)|^{-1}G(\rho(l)).$$
Then, since the specific relative entropy $h$ (introduced in Section~\ref{intrSec}) is an affine function, 
\begin{align*}
&h(\Q^*|\P)+h\big(|\rho(l)|^{-1}G(\rho(l))|\mu_{\Q^*}(|y_{k,l}|^{-\alpha})\big)\\
&\quad=(1-\lambda)h(\Q_{k,l}|\P)\\
&\quad<h(\Q_{k,l}|\P)+h\big(|\rho(l)|^{-1}G(\rho(l))|\mu_{\Q_{k,l}}(|y_{k,l}|^{-\alpha})\big),
\end{align*}
which contradicts again the minimality of $\Q_{k,l}$.
\end{proof}

Now, we can complete the proof of Theorem~\ref{lowTauProbThm}.
\begin{proof}[Proof of Theorem~\ref{lowTauProbThm}]
By Lemmas~\ref{cpApp1Lem} and~\ref{cpApp2Lem}, it suffices to show that $G(f)=F(f\one_M)$ holds for any $f:\L'_1\to[0,\infty)$ of the form $f=\one_{\rho(l_0)}$ for some $1\le l_0\le 2^{dk_0}$. 
Now, we can argue as follows.
\begin{align*}
|F(f\one_M)-G(f)|\le\limsup_{k\to\infty} |F^{(k)}(f\one_{M^{(k)}})-G(f)|+\limsup_{k\to\infty}F^{(k)}(f\one_{M^{(k)}\setminus M}).
\end{align*}
Lemma~\ref{mDiffSet} shows that the second summand is zero. Moreover, by definition of $F^{(k)}$, 
$$F^{(k)}(f\one_{M^{(k)}})=G^{(k)}(f),$$
where $G^{(k)}$, $k\ge k_0$ denotes the measure with locally constant density $g^{(k)}(y)=|\rho(l)|^{-1}G(\rho(l))$. Hence, 
$$G^{(k)}(f)=G^{(k)}(\rho(l_0))=\sum_{\rho(l)\subset \rho(l_0)} G(\rho(l))=G(\rho(l_0))=G(f),$$
as required.
\end{proof}



\subsection{Proof of Corollary~\ref{lowTauProbCor}}
\begin{proof}
The upper estimate is a direct consequence of the upper bound in Theorem~\ref{lowTauProbThm}
\begin{align*}
\limsup_{\t\to0}|\Lambda_\t'|^{-1}\log p_\t&=\limsup_{\t\to0}|\Lambda_\t'|^{-1}\log\P\big(|\Lambda_\t'|^{-1}Y^{\t}(t^{-\b}\cdot)=0\big)\cr
&\le-\int_{\L_1'}\inf_{\Q\in\mc{P}_\theta}\big(h(\Q|\P)+\lambda_{\ms{R}}\Q(\Gamma(|y|^{-\alpha},o))\big)\d y.
\end{align*}
For the lower estimate first note that
\begin{align*}
p_\t
&=\E\exp\Big(-\lambda_{\ms{R}}\int_{\Lambda_\t'}\Gamma(\t^{-1}\ell(y),y)\d y\Big)
\end{align*}
where $\Gamma(a,y)=\E(q(a\I(y)^{-1})|X)$ is a non-local function of the transmitter process.
In order to be able to apply~\cite[Theorem  3.1]{georgii2}, we need to establish a translation-invariant setting using discretization of the integrand. To be more precise, we subdivide $\Lambda'_{\t}$ into $2^{dn}$ sub-cubes $\Lambda^{i}_\t$ of side length $2^{1-n}|ws_{\ms{min}}\t|^{-\beta}$ and let $y_i$ denote the corresponding element of the subcube $\Lambda^{i}_\t$ which is closest to the origin. Then
\begin{align*}
p_\t
&\ge\E\exp\Big(-\lambda_{\ms{R}}\sum_{i=1}^{2^{dn}}\int_{\Lambda^i_\t}\Gamma(\t^{-1}\ell(|y_i|),y)\d y\Big)
\ge\E\exp\Big(-\lambda_{\ms{R}}\sum_{i=1}^{2^{dn}}\int_{\Lambda^i_\t}\Gamma^b(\t^{-1}\ell(|y_i|),y)\d y\Big)
\end{align*}
where $\Gamma^b(a,y)=\E(q(a\I^b(y)^{-1})|X)$. 
Further let $X^{\ms{per},i}$ be the configuration obtained after extending the configuration of the marked Poisson point process $X$ in the subcube $\Lambda^i_\t$ periodically in the entire Euclidean space $\R^d$. 
The error made replacing $\sum_{i=1}^{2^{dn}}\int_{\Lambda^i_\t}\Gamma^b(\t^{-1}\ell(y_i),y)$ by $\sum_{i=1}^{2^{dn}}\int_{\Lambda^i_\t}\Gamma^{\ms{per},b}(\t^{-1}\ell(y_i),y)$ is negligible in the large deviation principle where 
$\Gamma^{\ms{per},b}(a,y)=\E(q(a\I^b(y)^{-1})|X^{\ms{per},i})$, indeed
\begin{align}\label{per estimate}
&\int_{\Lambda^i_\t}[\Gamma^b(\t^{-1}\ell(y_i),y)-\Gamma^{\ms{per},b}(\t^{-1}\ell(y_i),y)]\d y\cr
&\le N(w^2t)^{-1}\int_{\Lambda^i_\t}[\I^b(o,y,X)-\I^b(o,y,X^{\ms{per},i})]\d y\cr
&\le N(s^2_{\ms{min}}w^2t)^{-1}\Big[\sum_{\substack{X_j\in X \\ X_j\not\in\Lambda^i_\t, X_j\in\Lambda^i_{\t,b}}}\int_{\Lambda^i_\t}\ell_b(|X_j-y|)\d y+\sum_{\substack{X_j\in X^{\ms{per},i} \\ X_j\not\in\Lambda^i_\t, X_j\in\Lambda^i_{\t,b}}}\int_{\Lambda^i_\t}\ell_b(|X_j-y|)\d y\Big]\cr
&\le N(s^2_{\ms{min}}w^2t)^{-1}[X(\Lambda^i_{\t,b}\setminus\Lambda^i_\t)+X^{\ms{per},i}(\Lambda^i_{\t,b}\setminus\Lambda^i_\t)]\int_{B_{b}(o)}\ell_b(|y|)\d y,
\end{align}
where $\Lambda^i_{\t,b}$ denotes the volume $\Lambda^i_{\t}$ joined with its $b$-boundary. 
Consequently, for all $\e>0$,
\begin{align*}
\E\exp&\Big(-\lambda_{\ms{R}}\sum_{i=1}^{2^{dn}}\int_{\Lambda^i_\t}\Gamma^b(\t^{-1}\ell(|y_i|),y)\d y\Big)\cr
&\ge e^{-\e|\Lambda_t'|}\E\exp\Big(-\lambda_{\ms{R}}\sum_{i=1}^{2^{dn}}\int_{\Lambda^i_\t}\Gamma^{\ms{per},b}(\t^{-1}\ell(|y_i|),y)\d y\Big)\cr
&\hspace{1cm}-\P\Big(\lambda_{\ms{R}}\sum_{i=1}^{2^{dn}}\int_{\Lambda^i_\t}\Gamma^b(\t^{-1}\ell(y_i),y)-\Gamma^{\ms{per},b}(\t^{-1}\ell(y_i),y)\d y\ge\e|\Lambda_\t'|\Big).
\end{align*}
By equation \eqref{per estimate}, the second line is bounded from below by 
\begin{align*}
-2\P\Big(\lambda_{\ms{R}}2N(s^2_{\ms{min}}w^2t)^{-1}\int_{B_{b}(o)}\ell_b(|y|)\d y X(\Lambda^1_{\t,b}\setminus\Lambda^1_\t)
\ge\e|\Lambda_\t^1|\Big).
\end{align*}
But this goes to zero on an exponential scale infinitely fast by 
Lemma~\ref{erosionLem}. 
%
%
Hence, 
\begin{align*}
\lim_{\t\to0}|\L'_t|^{-1}&\log\E\exp\Big(-\lambda_{\ms{R}}\sum_{i=1}^{2^{dn}}\int_{\Lambda^i_\t}\Gamma^b(\t^{-1}\ell(y_i),y)\d y\Big)\cr
&\ge\lim_{\t\to0}|\L'_t|^{-1}\log\E\exp\Big(-\lambda_{\ms{R}}\sum_{i=1}^{2^{dn}}\int_{\Lambda^i_\t}\Gamma^{\ms{per},b}(\t^{-1}\ell(y_i),y)\d y\Big)\cr
&=2^{-dn}\sum_{i=1}^{2^{dn}}\lim_{\t\to0}|\L_t^i|^{-1}\log\E\exp\Big(-\lambda_{\ms{R}}\int_{\Lambda^i_\t}\Gamma^{\ms{per},b}(\t^{-1}\ell(y_i),y)\d y\Big),
\end{align*}
where we used the independence of the $X^{\ms{per},i}$ with respect to $i$ in the second line. Now we are in the position to apply~\cite[Theorem  3.1]{georgii2} and write
\begin{align*}
\lim_{\t\to0}|\L'_t|^{-1}\log&\E\exp\Big(-\lambda_{\ms{R}}\sum_{i=1}^{2^{dn}}\int_{\Lambda^i_\t}\Gamma^{\ms{per},b}(\t^{-1}\ell(y_i),y)\d y\Big)\cr
&\ge-2^{-dn}\sum_{i=1}^{2^{dn}}\inf_{\Q\in\mc{P}_\theta}\big(h(\Q|\P)+\lambda_{\ms{R}} \Q(\Gamma^b(|y_i|^{-\a},o))\big),
\end{align*}
where we also used the continuity of $\Gamma^{\ms{per},b}$ ensured by the truncation of the interference. Notice that 
\begin{align*}
\limsup_{b\to\infty}\inf_{\Q\in\mc{P}_\theta}\big(h(\Q|\P)+\lambda_{\ms{R}} \Q(\Gamma^b(|y_i|^{-\a},o))\big)
\le\inf_{\Q\in\mc{P}_\theta}\big(h(\Q|\P)+\lambda_{\ms{R}} \Q(\Gamma(|y_i|^{-\a},o))\big).
\end{align*}
Indeed, let $\Q_0$ be a minimizer of the right hand side, then 
\begin{align*}
\limsup_{b\to\infty}\inf_{\Q\in\mc{P}_\theta}\big(h(\Q|\P)+\lambda_{\ms{R}} \Q(\Gamma^b(|y_i|^{-\a},o))\big)
&\le h(\Q_0|\P)+\lambda_{\ms{R}} \limsup_{b\to\infty}\Q_0(\Gamma^b(|y_i|^{-\a},o))
\end{align*}
and it suffices to show that
\begin{align*}
|\Q_0(\Gamma^b(|y|^{-\a},o))-\Q_0(\Gamma(|y|^{-\a},o))|\le N|y|^{-\a}w^{-2}\Q_0\Big(\sum_{X_j\not\subset\L_b}\ell(|X_j|)\Big)
\end{align*}
tends to zero as $b$ tends to infinity. But this is true since $\Q_0$ is a translation-invariant point process. 
In order to perform the large-$n$ limit, we have to show that 
\begin{align*}
y\mapsto\inf_{\Q\in\mc{P}_\theta}\big(h(\Q|\P)+\lambda_{\ms{R}} \Q(\Gamma(|y|^{-\a},o))\big)
\end{align*}
is continuous. But this is also true since 
\begin{align*}
&|\inf_{\Q\in\mc{P}_\theta}\big(h(\Q|\P)+\lambda_{\ms{R}} \Q(\Gamma(|y|^{-\a},o))\big)-\inf_{\Q\in\mc{P}_\theta}\big(h(\Q|\P)+\lambda_{\ms{R}} \Q(\Gamma(|x|^{-\a},o))\big)|\cr
&\le\sup_{\Q\in\mc{P}_\theta}\big|\lambda_{\ms{R}} \Q(\Gamma(|y|^{-\a},o))-\lambda_{\ms{R}} \Q(\Gamma(|x|^{-\a},o))\big|\le N w^{-1}\lambda_{\ms{R}}\big||y|^{-\a}-|x|^{-\a}\big|.
\end{align*}
This gives the result.
%
%
%
%
%
%
\end{proof}

\section{Importance Sampling}
\label{impSampSec}

In this section, we show how the LDPs derived in Theorems~\ref{ldpThm} and~\ref{lowTauProbThm} can be used to devise an importance-sampling scheme improving the accuracy of basic Monte Carlo approaches for estimating the probability of observing unlikely configurations of connectable receivers.
Theorems~\ref{ldpThm} and~\ref{lowTauProbThm} imply that such probabilities generally tend to zero exponentially quickly, so that basic Monte Carlo estimators perform poorly.

The general heuristic for devising importance-sampling schemes is the following. Instead of sampling the transmitters according to their true distribution, the simulation is performed by using a modified law under which the considered rare event is more likely. An appropriate reweighting using likelihood ratios ensures the unbiasedness of the new estimator. For a more detailed discussion of the general technique of importance sampling, we refer to the textbooks~\cite{glynn,handbook}.

In principle, Theorems~\ref{ldpThm} and~\ref{lowTauProbThm} provide precise descriptions of the asymptotically exponentially optimal change of measure, in the sense that the modified law of transmitters should be given by suitable Gibbs point processes. However, as these distributions just arise as minimizers of fairly complicated functionals, it is difficult to use them for computational purposes. Still, by performing this minimization in the restricted class of Poisson point processes, we can achieve substantial accuracy benefits. 

We only provide a proof-of-concept for the use of importance sampling, and therefore assume a specific parameter constellation in the following. First, we fix $d=2$, $w=\lambda_{\ms{R}}=\lambda_{\ms{T}}=1$ and assume that the path-loss function is given by $\ell(r)=r^{-4}$. Moreover, we assume that there is no random environment, and that transmission powers and fading random variables are constant and equal to $1$. Note that this choice is not covered by the assumptions for Theorems~\ref{ldpThm} and~\ref{lowTauProbThm}. Nevertheless, our simulation results illustrate that variance reduction through importance sampling also hold under weaker conditions than the ones assumed in Theorems~\ref{ldpThm} and~\ref{lowTauProbThm}.

\subsection{Importance sampling related to Theorem~\ref{ldpThm}}
\label{impSamp1Seq}
Since we have assumed that there is no random environment and that transmission powers and fading variables are constant, the minimization in the rate function of Theorem~\ref{ldpThm} is performed only over stationary point processes of transmitters and receivers. As mentioned above, this minimization is intractable in its full generality. Nevertheless, in this section, we show that if minimization is performed only in the class of Poisson point processes, then the problem becomes tractable. In fact, we provide an example problem, where the minimization can be reduced to a standard two-dimensional constrained minimization problem, where the constraint is given in terms of certain special functions. The disadvantage of this approach is that solving the minimization problem in a restricted class of point process will not automatically lead to good choices for the importance sampling. This will become apparent from the simulation results discussed below.

We assume that $\t=1$ and consider events of the form 
$$A_{n,a}=\Big\{\tfrac{1}{|\L_n|}\sum_{X_i\in\L_n}\#Y^{(i)}<a\Big\},$$ 
i.e., the event that the (spatially) averaged number of connectable receivers associated with transmitters in the cube $\L_n$ is less than $a$.  Now, we explain how to implement an importance-sampling scheme based on the LDP. A related importance-sampling scheme for a Poisson point process on the real line has already been considered in~\cite{giesecke}, but for the convenience of the reader, we present some details in our situation.

In order to estimate the probability of the event $A_{n,a}$, we simulate the Poisson point processes with new intensities $\mu_{\ms{R}}>0$ and $\mu_{\ms{T}}>0$ in $\Lambda_n$. Then, the likelihood ratio of Poisson point processes with intensity $1$ with respect to these point processes is given by 
$$\exp\big(|\L_n|(\mu_{\ms{R}}-1)+|\L_n|(\mu_{\ms{T}}-1)\big)\mu_{\ms{R}}^{-X(\L_n)}\mu_{\ms{T}}^{-Y(\L_n)}.$$
Hence, an unbiased estimator for $\P(A_{n,a})$ is given by
$$\widehat{p}_{n,a,\mu_{\ms{T}},\mu_{\ms{R}}}=\exp\big(|\L_n|(\mu_{\ms{R}}-1)+|\L_n|(\mu_{\ms{T}}-1)\big)\mu_{\ms{R}}^{-X(\L_n)}\mu_{\ms{T}}^{-Y(\L_n)}\one_{A_{n,a}}.$$
Note that in order to take into account edge effects, we also generate transmitters and receivers with the unmodified intensity in a small environment around $\L_n$. We can obtain estimates $\widehat{p}$ and $\widehat{v}$ of the expectation and variance of $\widehat{p}_{n,a,\mu_{\ms{T}},\mu_{\ms{R}}}$ by considering the sample average and variance of $N\ge1$ independent copies generated using Monte Carlo simulation.

This leaves the question as of how to find good choices for $\mu_{\ms{R}}$ and $\mu_{\ms{T}}$. In a first attempt, choose these parameters to minimize the large-deviation rate function appearing in Theorem~\ref{ldpThm}. If $\Q\in\mc{P}_\theta$ the distribution of independent Poisson point processes of  receivers and transmitters with intensities $\mu_{\ms{R}}$ and $\mu_{\ms{T}}$, then the relative entropy $h(\Q|\P)$ is given by the formula
\begin{align}
\label{pairEnt}
(\mu_{\ms{R}}\log \mu_{\ms{R}}-\mu_{\ms{R}}+1)+(\mu_{\ms{T}}\log \mu_{\ms{T}}-\mu_{\ms{T}}+1).
\end{align}
Hence, to determine the optimal intensities $(\lambda_{\ms{R},\ms{opt}},\lambda_{\ms{T},\ms{opt}})$ according to Theorem~\ref{ldpThm}, we need to minimize~\eqref{pairEnt} under the constraint
\begin{align}
\label{const1}
\Q^*(\#Y^{(o)})<a.
\end{align}
Next, we express the constraint~\eqref{const1} in terms of certain special functions. First, by Campbell's theorem, 
\begin{align*}
\Q^*(\#Y^{(o)})&=\mu_{\ms{T}}\mu_{\ms{R}}\int_{B_1(o)}\Q^*\Big(\frac{|x|^{-4}}{1+\sum_{i\ge1}|X_i|^{-4}}\ge 1\Big) \d x\\
&=\mu_{\ms{T}}\mu_{\ms{R}}2\pi\int_{0}^1r\P\Big(\sum_{i\ge1}|X_i|^{-4}\le\mu_{\ms{T}}^2(r^4-1)\Big) \d r,
\end{align*}
where in the last line we used that scaling by $1/\sqrt{\mu_\ms{T}}$ transforms a Poisson point process with intensity $1$ to a Poisson point process with intensity $\mu_{\ms{T}}$. 
Moreover, $\sum_{i\ge1}|X_i|^{-4}$ is distributed according to an inverse gamma distribution with parameters $0.5$ and $\pi^3/4$. In particular, $\P(\sum_{i\ge1}|X_i|^{-4}\le s)=\pi^{-1/2}\gamma(1/2,-\pi^3/(4s))$, where $\gamma(\cdot,\cdot)$ denotes the incomplete gamma function. Now it is easy to check that
\begin{align*}
\Q^*(\#Y^{(o)})&=\mu_{\ms{R}}\mu_{\ms{T}}2\pi\int_{0}^1r \gamma\big(1/2, \pi^3/(4\mu_{\ms{T}}^2(r^4-1))\big) \d r\\
&=\mu_{\ms{R}}\mu_{\ms{T}}\pi\exp\big(\pi^3/(4\mu_{\ms{T}}^2)\big)\ms{erfc}\big(\pi^{3/2}/(2\mu_{\ms{T}})\big),
\end{align*}
where $\ms{erfc}$ denotes the complimentary error function.
For instance, if we choose $a=0.5$, then $(\lambda_{\ms{R},\ms{opt}},\lambda_{\ms{T},\ms{opt}})\approx(0.832,0.984)$.

In order to assess the actual accuracy improvements that can be achieved with this importance sampling-scheme, we performed a prototypical Monte Carlo analysis. We fixed $a=0.5$, $n=25$ and performed $N=1,000,000$ simulation runs. We consider three different parameter choices for the importance sampling intensities $(\mu_{\ms{R}},\mu_{\ms{T}})$. First, we consider the case of basic Monte Carlo simulation, that is $(\mu_{\ms{R}},\mu_{\ms{T}})=(1,1)$. Second, we take the intensities that are obtained from the large-deviation analysis performed above, i.e., $(\mu_{\ms{R}},\mu_{\ms{T}})=(0.832,0.984)$. Third, we estimate $(\mu_{\ms{R}},\mu_{\ms{T}})$ from a simple cross-entropy scheme.  That is, we performed a pilot run of $100,000$ basic Monte Carlo simulations and determined the average intensities under the condition that the rare event occurs. This gives $(\mu_{\ms{R}},\mu_{\ms{T}})=(0.892,0.989)$. We refer the reader to~\cite{handbook} for further details on the general cross-entropy technique.
The results for $\widehat{p}$ and $\widehat{v}$ are reported in Table~\ref{tab1}.

\begin{table}
\begin{center}
\begin{tabular}{lcc}
 $(\mu_{\ms{R}},\mu_{\ms{T}})$  & $\widehat{p}$  & $\widehat{v}$\\
\hline
$(1,1)$ & $3.31\times 10^{-4}$ & $3.31\times 10^{-4}$ \\
$(0.832,0.984)$           & $3.12\times 10^{-4}$ & $3.69\times 10^{-4}$ \\
$(0.892,0.989)$            & $3.29\times 10^{-4}$ & $5.10\times 10^{-5}$
\end{tabular}
\caption{Comparison of the simulation results for the expectation and variance of the considered importance sampling estimators with transmitter and receiver intensities $(\mu_{\ms{R}},\mu_{\ms{T}})$.}
\label{tab1}
\end{center}
\end{table}

In particular, we would like to draw the attention to an important observation:  The estimator that is obtained as the solution of the optimization based on our large-deviation principle actually has a higher variance than the basic Monte Carlo estimator. Given the close relation between large-deviation theory and asymptotically optimal change of measures, this might come as a surprise at first sight. However, since we performed our optimization not in the full class of stationary point processes, but only considered Poisson point processes, the simulation output does not contradict this intuition. In fact, considered from a different perspective, the simulation results provide evidence that the optimal change of measure is rather far (in the Kullback-Leibler distance) from being a Poisson point process. In contrast, performing the change of measure with the intensities obtained from the pilot run shows that for the considered example, a more than seven-fold variance reduction can be achieved.

The discussion in the previous paragraph raises the legitimate question whether the change of measures deduced from the large-deviation result are of any practical use for importance sampling? Indeed, in the example described above, the intensities that lead to the seven-fold decrease in variance could be found without reference to the LDP, namely by an `educated guess' (or rather `cross-entropy').  Nevertheless, when considering importance sampling in the setting of Corollary~\ref{lowTauProbCor} finding a good importance sampling change of measure would involve `guessing' a continuous family of parameters, which is substantially more involved than what we have done above. In contrast, a simple analysis of the large-deviation rate function provides immediately a useful heuristic for the shape of the curve.

\subsection{Importance sampling related to Theorem~\ref{lowTauProbThm}}
Finally, we investigate importance-sampling techniques related to Theorem~\ref{lowTauProbThm}. We consider the specific setting of Corollary~\ref{lowTauProbCor}, i.e., estimation of the isolation probability $p_\t$ for small values of $\t$. Similar to the situation considered in Section~\ref{impSamp1Seq}, the full minimization problem is intractable, so that we restrict our attention to the class of homogeneous Poisson point processes. However, the situation is slightly different from the one considered in Section~\ref{impSamp1Seq}. Instead of globally optimizing a transmitter and receiver intensity, we now have the freedom to choose a different intensity for each point in $\L_1'$. Due to isotropy, this reduces to the task of choosing an optimal intensity $\lambda_{\ms{opt}}(r)$ for each $r\in[0,1]$.  This optimal intensity must minimize the following expression that can be derived from the variational characterization in Corollary~\ref{lowTauProbCor}:
\begin{align}
\label{pairEnt2}
\lambda_{\ms{opt}}(r)\log\lambda_{\ms{opt}}(r)-\lambda_{\ms{opt}}(r)+1+\P(r^{-4}\ge1+\lambda_{\ms{opt}}(r)^{2}\sum_{i\ge1}|X_i|^{-4}).
\end{align}
This is a standard minimization problem that can be solved by finding the roots of the derivative with respect to $\lambda_{\ms{opt}}(r)$. After some simplifications, we arrive at
$$\log\lambda_{\ms{opt}}(r)=\frac{\pi}{\sqrt{r^{-4}-1}}\exp(\frac{\pi^3\lambda_{\ms{opt}}(r)^{2}}{4(r^{-4}-1)}).$$
This equation can be solved numerically; a plot of this solution is shown in Figure~\ref{intPlot}.
\begin{figure}[!htpb]
\centering
\includegraphics[width=6.6cm]{./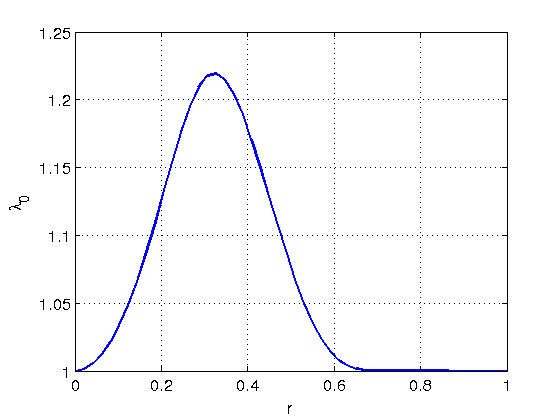}
\caption{Plot of the optimal density $\lambda_{\ms{opt}}(r)$ at distance $r$ from the origin}
\label{intPlot}
\end{figure}

As in the previous example, in order to assess the actual accuracy improvements for the estimation of $p_\t$ that can be achieved with this importance sampling-scheme, we performed a prototypical Monte Carlo analysis. We fixed $\t=0.002$ and performed $N=1,000,000$ simulation runs. 

\begin{table}
\begin{center}
\begin{tabular}{lcc}
   & $\widehat{p}$  & $\widehat{v}$\\
\hline
$\lambda(\cdot)\equiv1$ & $7.72\times 10^{-6}$ & $8.22\times 10^{-10}$ \\
$\lambda(\cdot)\equiv\lambda_{\ms{opt}}(\cdot)$            & $7.70\times 10^{-6}$ & $1.78\times 10^{-10}$
\end{tabular}
\caption{Comparison of the simulation results for the expectation and variance of the basic versus the importance sampling estimator.}
\end{center}
\end{table}

In contrast to the previous example, we see that the importance-sampling estimator derived from large-deviation theory provides substantial benefits. Indeed, the variance is reduced by approximately $78\%$. Furthermore, applying the cross-entropy technique for the present example would be substantially more involved than in the previous one. Indeed, instead of simply estimating two parameters, we would need to extract an entire curve from the pilot runs, so that proper statistical tools would be needed to estimate such a functional object from data.

\section*{Acknowledgments}
The authors thank W.~K\"onig for interesting discussions, and for proposing the topic of this article and the use of stationary empirical fields.
The authors thank M.~Renger for the hint to apply the contraction principle for deriving Theorem~\ref{lowTauProbThm}. We also thank T.~Brereton for pointing out the reference~\cite{giesecke}. This research was supported by the Leibniz group on Probabilistic Methods for Mobile Ad-Hoc Networks.

\bibliography{wias}
\bibliographystyle{abbrv}

\end{document}